\titleformat*{\section}{\Large\bfseries}
\titleformat{\subsection}[runin]{\normalfont\bfseries}{\thesubsection.}{.5em}{}[.]\titlespacing{\subsection}{0pt}{2ex plus .1ex minus .2ex}{.8em}
\titleformat{\subsubsection}[runin]{\normalfont\bf}{\thesubsubsection.}{.3em}{}[.]\titlespacing{\subsubsection}{0pt}{1ex plus .1ex minus .2ex}{.5em}
\titleformat{\paragraph}[runin]{\normalfont\itshape}{\theparagraph.}{.3em}{}[.]\titlespacing{\paragraph}{0pt}{1ex plus .1ex minus .2ex}{.5em}
\definecolor{darkred}{rgb}{0.9,0,0.3}
\definecolor{darkblue}{rgb}{0,0.3,0.9}
\definecolor{vdarkred}{rgb}{0.7,0,0.2}
\definecolor{vdarkblue}{rgb}{0,0.2,0.7}
\numberwithin{equation}{section}
\numberwithin{figure}{section}
\theoremstyle{plain} 
\newtheorem{theorem}{Theorem}[section]
\newtheorem*{theorem*}{Theorem}
\newtheorem{lemma}[theorem]{Lemma}
\newtheorem*{lemma*}{Lemma}
\newtheorem{corollary}[theorem]{Corollary}
\newtheorem*{corollary*}{Corollary}
\newtheorem{proposition}[theorem]{Proposition}
\newtheorem*{proposition*}{Proposition}
\newtheorem{conjecture}[theorem]{Conjecture}
\newtheorem*{conjecture*}{Conjecture}
\theoremstyle{definition} 
\newtheorem*{definition*}{Definition}
\newtheorem*{example*}{Example}
\newtheorem{remark}[theorem]{Remark}
\newtheorem*{remark*}{Remark}
\newtheorem*{assumption*}{Assumption}
\renewcommand{\b}[1]{\boldsymbol{\mathrm{#1}}} 
\newcommand{\bb}{\mathbb} 
\renewcommand{\cal}{\mathcal}
\newcommand{\ul}[1]{\underline{#1} \!\,} 
\newcommand{\e}{\mathrm{e}}
\newcommand{\A}{\mathcal{A}}
\newcommand{\D}{\mathrm{D}}
\newcommand{\ii}{\mathrm{i}}
\newcommand{\dd}{\mathrm{d}}
\newcommand*{\deq}{\mathrel{\vcenter{\baselineskip0.65ex \lineskiplimit0pt \hbox{.}\hbox{.}}}=}
\newcommand*{\eqd}{=\mathrel{\vcenter{\baselineskip0.65ex \lineskiplimit0pt \hbox{.}\hbox{.}}}}
\renewcommand{\leq}{\leqslant}
\renewcommand{\geq}{\geqslant}
\renewcommand{\epsilon}{\varepsilon}
\newcommand{\ceil}[1]  {\lceil  {#1} \rceil}
\DeclareMathOperator{\tr}{Tr}
\DeclareMathOperator{\supp}{supp}
\DeclareMathOperator{\im}{Im}
\newcommand*{\rom}[1]{\expandafter\@slowromancap\romannumeral #1@}
\begin{document}	
\begin{center}
	\vspace{5em}
	{\Large\bf
	Spectral gap and edge universality of dense random regular graphs}\\ \vspace{3em}
\large Yukun He \\ \vspace{1em}
\end{center}

\begin{abstract}
\noindent	Let $\mathcal A$ be the adjacency matrix of a random $d$-regular graph on $N$ vertices, and we denote its eigenvalues by $\lambda_1\geq \lambda_2\cdots \geq \lambda_{N}$. For $N^{2/3+o(1)}\leq d\leq N/2$, we prove optimal rigidity estimates of the extreme eigenvalues of $\mathcal A$, which in particular imply that
\[
\max\{|\lambda_N|,\lambda_2\} <2\sqrt{d-1}
\]
with very high probability. In the same regime of $d$, we also show that
\[
N^{2/3}\bigg(\frac{\lambda_2+d/N}{\sqrt{d(N-d)/N}}-2\bigg) \overset{d}{\longrightarrow} \mathrm{TW}_1\,,
\]  
where $\mathrm{TW}_1$ is the Tracy-Widom distribution for GOE; analogue results also hold for other non-trivial extreme eigenvalues.
\end{abstract}

\hfill{\textit{To the memory of my waigong, Sun Wenya}}

\section{Introduction}

In this article, we consider a random $d$-regular graph on $N$ vertices, under the uniform probability measure. Let $\cal A \in \bb R^{N\times N}$ be the adjacency matrix of the graph, and we denote its eigenvalues by $\lambda_1\geq \cdots \geq \lambda_N$. It is easy to see that 
$
\lambda_1=d\,
$
with corresponding eigenvector $\b e\deq N^{-1/2}(1,1,...,1)^*$.

The behavior of nontrivial extreme eigenvalues of $\A$ is of particular interest in graph theory and computer science. For instance, the gap between the first and second eigenvalues measures the expanding property of the graph. For a deterministic $d$-regular graph on $N$ vertices, the Alon-Boppana bound \cite{Alo86} states that 
\[
\lambda_2, |\lambda_N| \geq 2\sqrt{d-1}(1-o(1))
\]
for $d$ fixed and $N$ large enough. A \textit{Ramanujan graph} is a $d$-regular graph whose nontrivial eigenvalues are bounded in absolute value by $2\sqrt{d-1}\,,$ i.e.\,it is a graph that essentially saturates the Alon–Boppana bound. Ramanujan graphs were first constructed by Lubotzky, Phillips and Sarnak \cite{LPS88}, and by Margulis \cite{Mar88} for some values of $d$. The construction of Ramanujan graphs in the bipartite case for all degrees was given by Marcus, Spielman and Srivastava \cite{MSS13,MSS13.2}. For the random $d$-regular graph $\A$, when $d$ is fixed, Friedman \cite{Fri08} showed that, for sufficiently large $N$,  
\[
\lambda_2, |\lambda_N| \leq 2\sqrt{d-1}(1+o(1))
\] 
with high probability (the proof was later substantially simplified by Bordenave \cite{Bor15}).  This means that a random $d$-regular graph is typically ``almost Ramanujan". More recently, Huang, McKenzie and Yau \cite{HMY24} (following Huang and Yau \cite{HY21})  extended this result by showing the near-optimal rate
\[
\lambda_2, |\lambda_N| \leq 2\sqrt{d-1}(1+O(N^{-2/3+o(1)}))
\]
with probability $1-N^{-1+o(1)}$. 

The case of $d \leq  N/2$ that tends to infinity with $N$ was conjectured by Vu \cite{Vu14} to have
\begin{equation} \label{vuconjecture}
\lambda_2, |\lambda_N| =2\sqrt{d(N-d)/N})(1+o(1))
\end{equation}
with high probability. The magnitude bound $\lambda_2+|\lambda_N|=O(\sqrt{d})$ with high probability was proved by Broder, Frieze, Suen and Upfal \cite{BFSU98}  for $d=o(\sqrt{N})$; by Cook, Goldstein and Johnson \cite{CGJ15} for $d=O(N^{2/3})$; by Tikhomirov and Youssef \cite{TY19} for all $d \leq N/2$. The eigenvalue locations were proved to satisfy $\lambda_2, |\lambda_N| =2\sqrt{d-1}(1+o(1))$ in the regime $N^{o(1)}\leq  d \leq N^{2/3-o(1)}$, by Bauerschmidt, Huang, Knowles and Yau \cite{BHKY19}. Very recently, Sarid \cite{Sarid22} proved \eqref{vuconjecture} for $1\ll d\leq cN$, where $c$ is a small constant. 

Our first main result determines the extreme eigenvalue locations in the regime $N^{2/3+o(1)}\leq d \leq N/2$, with optimal error bounds. Together with \cite{BHKY19,Sarid22}, we settle the conjecture \eqref{vuconjecture} in the whole regime $1 \ll d \leq N/2$. We may now state our first main result.
\begin{theorem} \label{thm rigidity}
Fix $\tau>0$, $k\geq 2$, and assume $N^{2/3+\tau}\leq d\leq N/2$. For any fixed $\varepsilon,D>0$, we have
\begin{equation}\label{2.11}
	\lambda_2,...,\lambda_k=(2\sqrt{d(N-d)/N}-d/N)(1+O(N^{-2/3+\varepsilon}))
\end{equation}
as well as
\[
\lambda_N,...,\lambda_{N-k}=(-2\sqrt{d(N-d)/N}-d/N)(1+O(N^{-2/3+\varepsilon}))
\]
with probability $1-O(N^{-D})$.
\end{theorem}

The negative shift $-d/N$ in Theorem \ref{thm rigidity} is only relevant if we want the optimal error bound $O(N^{-2/3+\varepsilon})$. As $\sqrt{d(N-d)/N}\asymp d^{1/2}$ for $d\leq N/2$, Theorem \ref{thm rigidity} implies 
\begin{equation*} 
	\lambda_2, |\lambda_N| =2\sqrt{d(N-d)/N})(1+O(N^{-1/2}))
\end{equation*}
 with very high probability. In addition, Theorem \ref{thm rigidity} implies that for $N^{2/3+o(1)} \leq d \leq N/2$, almost all $d$-regular graphs on $N$ vertices are Ramanujan. Indeed, by \eqref{2.11} we have 
\[
\lambda_2-2\sqrt{d-1}=\frac{2-2d^2/N}{\sqrt{d(N-d)/N}+\sqrt{d-1}}-\frac{d}{N}+O(d^{1/2}N^{-2/3+\varepsilon})
\]
with very high probability. As $N^{2/3+o(1)} \leq d \leq N/2$, the above is negative with very high probability. The analogue also holds for $-\lambda_N-2\sqrt{d-1}$. This yields the following result.

\begin{corollary} \label{cor1.2}
Fix $D,\tau>0$. For $d$ large enough and $2d \leq N \leq d^{3/2+\tau}$, 
\begin{equation} \label{1.2}
	\mathbb P\Big(\max\{|\lambda_N|,\lambda_2\} <2\sqrt{d-1}\Big) \geq 1-N^{-D}\,.
\end{equation}
\end{corollary}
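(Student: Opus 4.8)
The plan is to deduce this directly from Theorem~\ref{thm rigidity} with $k=2$, observing that the hypothesis $2d\leq N\ll d^{3/2}$ is precisely the hypothesis $N^{2/3}\ll d\leq N/2$ of that theorem, rewritten. The only substantive content is a comparison of scales. Rationalizing and using $q^2=d(N-d)/N=d-d^2/N$,
\[
2q-2\sqrt{d-1}=\frac{4q^2-4(d-1)}{2q+2\sqrt{d-1}}=\frac{2-2d^2/N}{q+\sqrt{d-1}}\,.
\]
Since $d\gg N^{2/3}\gg N^{1/2}$ we have $d^2/N\gg 1$, so the numerator is negative with $|2-2d^2/N|\asymp d^2/N$; and since $q\asymp\sqrt{d}$ for $d\leq N/2$, the denominator is $\asymp\sqrt{d}$. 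Hence the ``textbook'' Ramanujan threshold $2\sqrt{d-1}$ lies strictly above the true spectral edge $2q$, with gap $2\sqrt{d-1}-2q\asymp d^{3/2}/N>0$.

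Next I would insert the rigidity estimate. Because $N\ll d^{3/2}$, the ratio $\big(d^{3/2}/N\big)\big/\big(d^{1/2}N^{-2/3}\big)=dN^{-1/3}$ is bounded below by a fixed positive power of $N$, so by the first paragraph we may fix a small $\varepsilon>0$ with $2\sqrt{d-1}-2q\geq 2d^{1/2}N^{-2/3+\varepsilon}$ for all $N$ large. With this $\varepsilon$ fixed, \eqref{2.11} with $k=2$ and $qN^{-2/3}\asymp d^{1/2}N^{-2/3}$ furnish an event $\Omega$ with $\mathbb P(\Omega)\geq 1-N^{-D}$ on which $|\lambda_2-2q|+|\lambda_N+2q|\leq d^{1/2}N^{-2/3+\varepsilon}$; in particular $\lambda_N<0$ on $\Omega$, so $|\lambda_N|=-\lambda_N$ there. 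Hence, on $\Omega$,
\[
\max\{\lambda_2,|\lambda_N|\}-2\sqrt{d-1}\;\leq\;-\big(2\sqrt{d-1}-2q\big)+d^{1/2}N^{-2/3+\varepsilon}\;\leq\;-d^{1/2}N^{-2/3+\varepsilon}\;<\;0\,,
\]
so $\max\{\lambda_2,|\lambda_N|\}<2\sqrt{d-1}$ on $\Omega$, which is \eqref{1.2}. (The passage from $\prec$ to a clean $N^{-D}$ bound on $\mathbb P(\Omega^c)$ costs only a harmless enlargement of $D$ by $1$ and is absorbed into ``$N$ large''.)

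There is no genuinely hard step here: all the difficulty is packed into Theorem~\ref{thm rigidity}, and the corollary is a purely algebraic consequence of it. The only point deserving attention is the scale-matching above. The separation between the classical Ramanujan threshold $2\sqrt{d-1}$ and the actual edge location $2q=2\sqrt{d(N-d)/N}$ is of order $d^{3/2}/N$, and the optimal rigidity window $qN^{-2/3}$ provided by Theorem~\ref{thm rigidity} drops below this separation exactly when $N\ll d^{3/2}$ --- which is precisely why the corollary is stated in that regime, and why one should not expect it outside it.
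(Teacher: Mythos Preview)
Your proof is correct and follows essentially the same approach as the paper: rationalize $2q-2\sqrt{d-1}$ to obtain $\frac{2-2d^2/N}{q+\sqrt{d-1}}$, then observe that this deterministic gap (of order $d^{3/2}/N$) dominates the rigidity error $O_\prec(qN^{-2/3})$ from Theorem~\ref{thm rigidity} in the stated regime. One minor quibble with your closing commentary: the scale comparison $d^{3/2}/N\gg d^{1/2}N^{-2/3}$ actually holds whenever $d\gg N^{1/3}$ (equivalently $N\ll d^3$, cf.\ Conjecture~1.6), so the restriction to $N\ll d^{3/2}$ is forced by the hypothesis of Theorem~\ref{thm rigidity}, not by the arithmetic here.
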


Beyond the law of large numbers, the distributions of the extreme eigenvalues of $\A$ were conjectured in \cite{MNS08} to satisfy \textit{edge universality}, i.e.\,after normalization, their joint distribution is the same as that of the extreme eigenvalues of the Gaussian Orthogonal Ensemble. Edge universality was proved by by Bauerschmidt, Huang, Knowles and Yau \cite{BHKY19} for $\A$ in the intermediate regime $N^{2/9+o(1)} \leq d \ll N^{1/3-o(1)}$. The authors showed that
\begin{equation} \label{1.5}
	N^{2/3}\bigg(\frac{\lambda_2}{\sqrt{d-1}}-2\bigg) \overset{d}{\longrightarrow} \mathrm{TW}_1\,,
\end{equation}
together with analogue results for other extreme eigenvalues. Recently, Huang and Yau \cite{HY23} extended \eqref{1.5} to $N^{o(1)}\leq d\leq N^{1/3-o(1)}$. Our second main result is the edge universality of $\A$ in the dense regime $N^{2/3+o(1)}\leq d \leq N/2$.

\begin{theorem} \label{theorem main result}
Fix $\tau>0$ and assume $N^{2/3+\tau}\leq d\leq N/2$. Let $\mu_1\geq \cdots \geq \mu_N$ denote the eigenvalues of a Gaussian Orthogonal Ensemble. Fix $k \in \bb N_+$.  We have
	\begin{multline*}
	\lim_{N\to \infty}	\bb P_{\A}\bigg(N^{2/3}\bigg(\frac{\lambda_{i+1}+d/N}{\sqrt{d(N-d)/N}}-2\bigg) \geq s_i, N^{2/3}\bigg(\frac{\lambda_{N-i+1}+d/N}{\sqrt{d(N-d)/N}}+2\bigg)\geq r_i, 1\leq i \leq k\bigg)\\
	=\lim_{N\to \infty}	\bb P_{\emph{GOE}}\Big(N^{2/3}(\mu_{i}-2)\geq s_i, N^{2/3}(\mu_{N-i+1}+2)\geq r_i, 1\leq i \leq k\Big)
	\end{multline*}
uniformly for all $s_1,r_1,...,s_k,r_k \in \bb R$.
\end{theorem}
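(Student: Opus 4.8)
The plan is to deduce Theorem~\ref{theorem main result} from edge universality of the GOE by a Green's function comparison, taking as input the optimal local law near $\pm2q$ that underlies the rigidity estimate of Theorem~\ref{thm rigidity}.

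\emph{Step 1 (reduction to a Wigner-type matrix).} Put $B\deq\A-d\b e\b e^*$; since $B$ and $\A$ agree on $\b e^\perp$, the rescaled matrix $\cal M\deq q^{-1}B$ is an $N\times N$ symmetric matrix whose eigenvalues are $q^{-1}\lambda_2\geq\cdots\geq q^{-1}\lambda_N$ together with one eigenvalue $0$ lying in the bulk. Its off-diagonal entries are $q^{-1}(\A_{ij}-d/N)$, with almost vanishing mean and variance $N^{-1}(1+O(N^{-1}))$, and by the local law its spectrum is rigid near $\pm2$ down to scale $N^{-1}$. It therefore suffices to match the $k$ largest and the $k$ smallest eigenvalues of $\cal M$ with those of a reference matrix $\cal M^{\mathrm{ref}}$ whose edge statistics are known.

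\emph{Step 2 (the comparison).} The exact $d$-regularity constraint makes the entries of $\A$ weakly correlated, but with a total correlation that is not negligible at the scale of the spectral edge; accordingly $\cal M^{\mathrm{ref}}$ is taken to be a Gaussian matrix matching the full second-order structure of $\cal M$ — all variances and covariances of the entries, together with the deterministic diagonal $-d/(Nq)$. With this choice the deterministic edges of $\cal M$ and $\cal M^{\mathrm{ref}}$ agree to within $O(d^{-1})=o(N^{-2/3})$ once $d\gg N^{2/3}$ (consistently with Theorem~\ref{thm rigidity}), so one can run a Green's function comparison: swapping the entries of $\cal M$ for the corresponding Gaussian ones (via a cumulant expansion, which handles the joint cumulants cleanly) and tracking the change of a smoothed eigenvalue-counting functional at spectral parameters at distance $\asymp N^{-2/3+\varepsilon}$ from $\pm2$. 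The second-order terms cancel by construction. The third-cumulant terms are the delicate ones: using the improved edge bound $\im m\asymp N^{-1/3}$ and the fluctuation-averaging cancellations behind the local law, each third-cumulant insertion gains a factor $N^{-2/3+\varepsilon}$; since the diagonal third cumulants of $q^{-1}\A_{ij}$ are of size $\asymp(Nq)^{-1}\asymp(N\sqrt d)^{-1}$ and there are $\asymp N^2$ of them, their net contribution is
\[
O_\prec\big(N^2\cdot(N\sqrt d)^{-1}\cdot N^{-2/3+\varepsilon}\big)=O_\prec\big(N^{1/3+\varepsilon}d^{-1/2}\big),
\]
which is $o(1)$ precisely when $d\gg N^{2/3}$. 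The joint third cumulants, the fourth cumulants (of size $\asymp(Nd)^{-1}$, and in any case not required to match for edge universality), and all higher cumulants are lower order in this regime.

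\emph{Step 3 (conclusion).} Being Gaussian, $\cal M^{\mathrm{ref}}$ obeys edge universality — via a Dyson Brownian motion argument on the constrained ensemble, or via the theory of Gaussian random matrices with correlated entries — so its $k$ largest and $k$ smallest eigenvalues, rescaled by $N^{2/3}$ around $\pm2$, converge jointly to the universal GOE edge limit appearing on the right-hand side of the theorem; combining this with Step~2 gives the joint convergence of $\big(N^{2/3}(q^{-1}\lambda_{i+1}-2),\,N^{2/3}(q^{-1}\lambda_{N-i+1}+2)\big)_{1\le i\le k}$ to the same limit, which is exactly the claimed identity (the several-eigenvalue, two-edge version being obtained by letting the comparison functional depend jointly on spectral parameters near $+2$ and near $-2$). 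The hard part is Step~2: one must extract the full $N^{-2/3}$ gain from each third-cumulant insertion — which forces the use of the optimal edge local law and its self-improving fluctuation-averaging estimates rather than crude resolvent bounds — while simultaneously matching, to the required precision, the second-order structure that encodes exact $d$-regularity and controlling the finite-rank subtraction $-d\b e\b e^*$. It is the size $\asymp(N\sqrt d)^{-1}$ of the third cumulant, equivalently the $O(d^{-1})$ correction to the deterministic edge, that pins the threshold at $d\gg N^{2/3}$.
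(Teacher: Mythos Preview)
Your overall strategy differs from the paper's: the paper follows the three-step scheme, first running a constrained Dyson Brownian motion $A(t)=\e^{-t/2}A+\sqrt{1-\e^{-t}}W$ (with $W$ a \emph{constrained} GOE) to a short time $t_*=N^{-1/3+\mu}$, invoking local relaxation results to get Tracy--Widom for $A(t_*)$, and then comparing $A(0)$ with $A(t_*)$ by a Green's function argument. You instead propose a direct comparison of $\cal M$ with a correlated Gaussian $\cal M^{\mathrm{ref}}$, skipping DBM.

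There is a genuine gap in Step~2. The entries of $\A$ are not independent, and the hard $d$-regular constraint produces correlations that a ``cumulant expansion'' does not handle off the shelf: there is no Stein-type identity $\bb E[\A_{ij}F(\A)]=\sum_{k}\kappa_k\,\bb E[\partial^{k-1}F]$ for regular graphs, and the joint cumulants you invoke are neither computed nor shown to be controllable. The paper's substitute is the switching formula (Lemma~\ref{lem2.2}), which plays exactly the role of integration by parts and is what makes the comparison in Lemma~\ref{lemma 6.6} go through; crucially, the leading term it produces matches the constrained-GOE identity \eqref{GOE}, giving the second-order cancellation you are assuming. Without this, your Step~2 has no mechanism for swapping entries. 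Relatedly, the claimed extra $N^{-2/3+\varepsilon}$ gain ``from each third-cumulant insertion'' is asserted rather than derived; in the paper the analogous savings come from the structure of the switching terms combined with Proposition~\ref{prop4.4} and the edge local law, not from generic fluctuation averaging.

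There is a second issue with the range of $d$. In the paper's argument the residual error in $\frac{\dd}{\dd t}\bb E L(X_t)$ is only $O_\prec(N^{1-\tau/3})$ before the spatial and time integrations; it is the short time window $t_*=N^{-1/3+\mu}$ that brings the final error below $1$. A direct comparison to a Gaussian corresponds to $t_*\asymp1$, and with the same error control this would force $d\gg N^{4/3}$ rather than $d\gg N^{2/3}$. So even after repairing Step~2 with switching, skipping DBM likely costs you the full range claimed in the theorem. Finally, your Step~3 assumes edge universality for the specific correlated Gaussian $\cal M^{\mathrm{ref}}$, which is an extra input; the paper sidesteps this entirely because universality of $A(t_*)$ follows from local relaxation of DBM given only the a priori rigidity of $A$.
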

To prove the main results, we analysis the Stieltjes transform of $\A$ near the spectral edge, on all mesoscopic spectral scales. This \textit{Green function method} is widely used in the random matrix community. To start of, it was applied to Wigner matrices, in particular in \cite{TV1,TV2,ESY1,ESY2,BEYY14,EPRSY,EYY3,EYY1}. It was then applied in \cite{EKYY1,EKYY2,LS1,HLY,HK20,BHY,Lee21,HLY15, H19} to sparse matrices, which includes the adjacency matrix of sparse Erd\H{o}s-R\'{e}nyi graphs $\cal G(N,p)$ for $p \gg N^{-1}$. These works rely on the fact that the matrix entries are independent (subject to the symmetry constraint), which is not the case for $\A$. In the work \cite{BKY15}, the authors developed a technique through local switching, which opens the door of studying random regular graphs through the Green function method. For $N^{o(1)} \leq  d \leq N^{2/3-o(1)}$, they proved that the eigenvalues of $\A$ satisfy the local semicircle law. The idea of switching was then applied to prove various results for $\A$ in the regime $M^{o(1)}\leq d \leq N^{2/3-o(1)}$ \cite{BHKY15,BHKY19}, and $d$ fixed \cite{HY21,BHY19}. All these works require the degree upper bound $d \ll N^{2/3}$, which is essentially due to the approximation $1-\A_{ij} \approx 1$. In other words, due to the sparsity of the graph in the regime $d \ll N^{2/3}$, in many situations, one can take two vertices of the graph, and with an affordable error assume that they are disconnected.

In order to deal with the dense case $N^{2/3+o(1)}\leq d \leq N/2$, we develop an algorithm which is insensitive to the increasing density of the graph. Comparing to \cite{BKY15,BHKY19}, the integration by parts formula used in this paper (see Lemma \ref{lem2.2}) comes with an error term that does not explicitly depend on $d$. Another ingredient of the proof is a large deviation result on the powers of $\A$ (see Proposition \ref{prop4.4}), which essentially counts the number of short cycles of the graph. This enables us to replace the entries of $\A^r$ ($r\geq 2$) by their expectations, with affordable errors. 

Our first step is to prove a weak local semicircle law for all $N^{o(1)} \leq d \leq N/2$, which is stated in terms of Green functions (see Theorem \ref{theorem 4.1}). A standard consequence of Theorem \ref{theorem 4.1} is the following complete eigenvector delocalization.

\begin{corollary} \label{cor4.2}
Fix $\tau>0$ and assume $N^{\tau}\leq d \leq N/2$. Let $\b u_i\in \mathbb S^{N-1}$ denote the $i$-th eigenvector of $\A$. For any fixed $\varepsilon,D>0$, we have
	\[
	\max_i	\|\b u_i\|_\infty =O(N^{-1/2+\varepsilon})
	\]
	with probability $1-O(N^{-D})$.
\end{corollary}

  After obtaining the weak local law, we perform a refined analysis of the averaged self-consistent equations near the spectral edge (see Proposition \ref{prop4.5}). This leads to a strong estimate on the traces of the Green functions in the regime $N^{2/3+o(1)}\leq d \leq N/2$ (see Proposition \ref{propsition 6.1}), and from which Theorem \ref{thm rigidity} follows. Providing optimal edge rigidity, the edge universality Theorem \ref{theorem main result} is proved basing on the usual three-step approach of random matrix theory \cite{EY17}. The same strategy was also used in \cite{BHKY19}.

	From Theorems \ref{thm rigidity} and \ref{theorem main result}, we see a shift of $-d/N$ on both spectral edges of $\cal A$. This is due to the fact that the diagonal entries of the adjacency matrix are 0. More precisely, observe that
	\[
	N\b e \b e^*-\cal A-I
	\]
	is the adjacency matrix of a random $(N-1-d)$-regular graph on $N$ vertices, and we denote its eigenvalues by $N-1-d=\widehat{\lambda}_1\geq \cdots \geq \widehat{\lambda}_N$. Thus for $ 2\leq k \leq N$, we have the relation
	\[
	\lambda_k+\widehat{\lambda}_{N+2-k}=-1\,.
	\]
	Our main results suggest that the shift $-1$ is shared between $\lambda$ and $\widehat{\lambda}$, with the amount proportional to the graph degree. The shift is essential to getting the Tracy-Widom limit, as 
	\[
	\frac{1}{\sqrt{d(N-d)/N}}\cdot \frac{d}{N} \gg N^{-2/3}
	\]
	for $ N^{2/3}\ll d \leq N/2$.
	
	Comparing the parameters of \eqref{1.5} and Theorem \ref{theorem main result}, together with the degree symmetry $d \longleftrightarrow N-d-1$ for $d-$regular graphs, one could propose that edge universality holds for all non-trivial random $d$-regular graphs, in the following scaling.

\begin{conjecture} \label{con1.5}
	Assume $3 \leq d \leq N-4$. There exists a constant $c_{N,d}$\footnote{Presumably, this constant is non-trivial only when $\min(d,N-d)$ is bounded. As observed in \cite{BHY19,HY23} and the current paper, $c_{N,d}=0$ if $\min(d,N-d)\geq N^{o(1)}$.} such that
	\[
	N^{2/3}\bigg(\frac{\lambda_2+d/N}{\sqrt{(d-1)(N-d-2)/N}}-2\bigg)-c_{N,d} \overset{d}{\longrightarrow} \mathrm{TW}_1\,,
	\]  
	where $\mathrm{TW}_1$ is the Tracy-Widom distribution for GOE. Analogues results also hold for other non-trivial extreme eigenvalues.
\end{conjecture}

Although the proof of Conjecture \ref{con1.5} in the regime when $d$ is fixed, is apparently difficult, it is probable that combining the techniques of \cite{BHKY19,HY23} and the current paper, one can prove optimal edge rigidity and universality for $N^{o(1)}\leq d \leq N/2$. Providing this is the case, the following will also stand.

\begin{conjecture}
For $d$ large enough and $N \geq 2d$, \eqref{1.2} holds if and only if $N \ll d^3$.
\end{conjecture}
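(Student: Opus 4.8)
The plan is to take, as an input, the optimal edge rigidity and universality throughout $N^{2/9}\ll d\leq N/2$ conjectured in the excerpt (under which it poses this statement), and to reduce \eqref{1.2} to a comparison between the typical location of $\lambda_2,\lambda_N$ and the Ramanujan threshold $2\sqrt{d-1}$. The relevant centering here is not $q$ of \eqref{3.1} but the symmetric quantity $q_*\deq\sqrt{(d-1)(N-d-2)/N}$ from Conjecture \ref{con1.5}: it is the point about which the edge fluctuations are $\mathrm{TW}_1$ on the optimal scale; it agrees with $\sqrt{d-1}$ to within the rigidity scale for small $d$ (matching \eqref{1.5}) and with $q$ for large $d$ (matching Theorems \ref{thm rigidity}, \ref{theorem main result}); but in the window $N^{1/3}\ll d\ll N^{2/3}$ only $q_*$ is accurate to the needed precision, so this distinction is essential. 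Concretely the input is: for $N^{2/9}\ll d\leq N/2$,
\[
|\lambda_2-2q_*|+|\lambda_N+2q_*|\prec q_*N^{-2/3},
\]
and $N^{2/3}(q_*^{-1}\lambda_2-2)$, $N^{2/3}(q_*^{-1}\lambda_N+2)$ converge in distribution to $\mathrm{TW}_1$, which has $\P(\mathrm{TW}_1\geq t)>0$ for every $t\in\R$.

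Everything hinges on one algebraic identity: from $q_*^2=(d-1)\bigl(1-(d+2)/N\bigr)$ one gets $(d-1)-q_*^2=(d-1)(d+2)/N$, hence
\[
2\sqrt{d-1}-2q_*=\frac{2\bigl((d-1)-q_*^2\bigr)}{\sqrt{d-1}+q_*}=\frac{2(d-1)(d+2)}{N\bigl(\sqrt{d-1}+q_*\bigr)}\,>\,0 .
\]
Since $1-(d+2)/N\geq\tfrac13$ for $d$ large and $N\geq 2d$, we have $q_*\asymp\sqrt{d-1}\asymp\sqrt d$, so $2\sqrt{d-1}-2q_*\asymp d^{3/2}/N$; by symmetry $-2q_*$ exceeds $-2\sqrt{d-1}$ by the same margin. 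So, on the overwhelmingly likely event that the rigidity bound holds, the non-trivial spectrum is confined to an interval interior to $[-2\sqrt{d-1},2\sqrt{d-1}]$ by $\Theta(d^{3/2}/N)$, and \eqref{1.2} becomes the question of whether this margin beats the edge-fluctuation scale $q_*N^{-2/3}\asymp\sqrt d\,N^{-2/3}$. The comparison ``$d^{3/2}/N$ versus $\sqrt d\,N^{-2/3}$'' is the comparison ``$d^3$ versus $N$''.

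For the ``if'' direction, suppose $d\geq N^{1/3+\epsilon}$ for some fixed $\epsilon>0$. Then $2\sqrt{d-1}-2q_*\geq c\,d^{3/2}/N\geq q_*N^{-2/3+\epsilon/2}$ for $N$ large, so
\[
\P\bigl(\lambda_2\geq 2\sqrt{d-1}\bigr)=\P\bigl(\lambda_2-2q_*\geq 2\sqrt{d-1}-2q_*\bigr)\leq\P\bigl(|\lambda_2-2q_*|\geq q_*N^{-2/3+\epsilon/2}\bigr)\leq N^{-D}
\]
by the rigidity input alone, and likewise $\P(\lambda_N\leq -2\sqrt{d-1})\leq N^{-D}$; since $\{\max\{|\lambda_N|,\lambda_2\}<2\sqrt{d-1}\}=\{\lambda_2<2\sqrt{d-1}\}\cap\{\lambda_N>-2\sqrt{d-1}\}$, a union bound yields \eqref{1.2}. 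For the ``only if'' direction, observe that \eqref{1.2} forces $\P(\lambda_2\geq 2\sqrt{d-1})\leq N^{-D}$ for every $D$; by the distributional convergence this says $\P(\mathrm{TW}_1\geq c_N)\leq N^{-D}$, where $c_N\deq(2\sqrt{d-1}-2q_*)N^{2/3}/q_*\asymp d/N^{1/3}$, and using the $\exp(-\tfrac23 t^{3/2}(1+o(1)))$ right tail of $\mathrm{TW}_1$ this requires $c_N\gg(\log N)^{2/3}$, i.e. $d^3\gg N(\log N)^{2}$. For $d$ a fixed power of $N$ this is precisely $N\ll d^3$, and in general it places the transition at $N\asymp d^3$ up to the poly-logarithmic slack that ``$\ll$'' is meant to absorb; conversely, when $c_N$ stays bounded (which happens once $N\ll d^3$ fails badly enough) the limit of $\P(\lambda_2\geq 2\sqrt{d-1})$ is some $\P(\mathrm{TW}_1\geq\ell)>0$ with $\ell<\infty$, by the full support of $\mathrm{TW}_1$, so \eqref{1.2} genuinely fails there.

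The principal obstacle is the assumed input, and it is more than bookkeeping: the favorable margin $\Theta(d^{3/2}/N)$ is a lower-order feature of the spectral edge, so the rigidity must be genuinely optimal \emph{and} anchored at $2q_*$ — in the window $N^{1/3}\ll d\ll N^{2/3}$ the $q$-centering of Theorem \ref{thm rigidity} (proved only for $N^{2/3}\ll d$) sits $\Theta(1/\sqrt d)$ away from $2q_*$, which exceeds the margin as soon as $d\ll N^{1/2}$, so one genuinely has to prove rigidity against $q_*$ with error $o(d^{3/2}/N)$ across the whole window; this is the expected output of merging the switching analysis of \cite{BHKY19} with the $d$-insensitive estimates of the present paper, but it has not been carried out. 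A secondary, currently open, point is the extreme-sparse range $3\leq d\ll N^{2/9}$: there \eqref{1.2} is likewise expected to fail for the same margin-versus-fluctuation reason, but no edge universality — nor even a quantitative edge anti-concentration bound — is available, even for $d$ fixed.
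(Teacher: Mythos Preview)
The statement is a \emph{conjecture}, not a theorem: the paper offers no proof, only the one-line heuristic that it ``will also stand'' provided one has optimal rigidity and universality throughout $N^{2/9}\ll d\le N/2$. Your proposal correctly fleshes out exactly that heuristic --- the key step being the comparison of the Ramanujan margin $2\sqrt{d-1}-2q_*\asymp d^{3/2}/N$ against the fluctuation scale $q_*N^{-2/3}\asymp \sqrt d\,N^{-2/3}$, which is precisely the computation behind Corollary~\ref{cor1.2} extended to the full range --- and you rightly flag that the conjectural input (rigidity and universality centered at $q_*$ across $N^{2/9}\ll d\le N/2$) is the genuine obstacle, not the deduction itself.
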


The rest of this paper is organized as follows. In section \ref{sec2} we recall the local switching, and prove an integration by parts formula which is insensitive to the degree $d$. In Section \ref{sec3} we prove a large deviation result on the powers of $\A$. In Section \ref{sec4} we prove the weak local semicircle law for all $ N^{o(1)}\leq d \leq N/2$. In Section \ref{sec5} we prove a strong self-consistent equation near the spectral edge. Finally in Section \ref{sec6} we use the results in Sections \ref{sec4} and \ref{sec5} to conclude the proof of our main results.

\subsection*{Conventions}. Unless stated otherwise, all quantities depend on the fundamental large parameter $N$, and we omit this dependence from our notation. We use the usual big $O$ notation $O(\cdot)$, and if the implicit constant depends on a parameter $\alpha$ we indicate it by writing $O_\alpha(\cdot)$. Let 
\[
X=(X^{(N)}(u): N \in \bb N, u \in U^{(N)})\,, \quad Y=(Y^{(N)}(u): N \in \bb N, u \in U^{(N)})
\]
be two families of nonnegative random variables, where $U^{(N)}$ is a possibly $N$-dependent parameter
set, and $Y\geq 0$. We say that $X$ is stochastically dominated by $Y$ , uniformly in $u$, if for any fixed $\varepsilon,D>0$,
\[
\sup_{u \in U^{(N)}}\bb P(|X|\geq Y N^{\varepsilon}) =O_{\varepsilon,D} (N^{-D})\,.
\]
We write $X\asymp Y$ if $X =O(Y)$ and $Y=O(X)$. If $X$ is stochastically dominated by $Y$ , we use the notation $X \prec Y$, or equivalently $X=O_{\prec}(Y)$. We say an event $\Omega$ holds with very high probability if for any $D>0$, $1-\bb P(\Omega)=O_D(N^{-D})$.

\subsection*{Acknowledgment}The author would like to thank Zhigang Bao for helpful comments. The author is supported by NSFC No.\,2023YFA1010400,  NSFC No.\,12322121, Hong Kong RGC Grant No.\,21300223 and CityU Start-up Grant No.\,7200727.

\section{Local switchings} \label{sec2}
As in \cite{BHKY19,BKY15}, we rely on switchings for regular graphs and the invariance under the permutation of vertices. For indices $i,j,k,l$, we define the signed adjacency matrices
\begin{equation} \label{2.3}
	(\Delta_{ij})_{xy}\deq \delta_{ix}\delta_{jy}+\delta_{iy}\delta_{jx}\,, \quad \xi_{ij}^{kl}=\Delta_{ij}+\Delta_{kl}-\Delta_{ik}-\Delta_{jl}\,.
\end{equation}
In addition, we denote the indicator function that the edges $ij$ and $kl$ are switchable by
\begin{equation} \label{2.4}
	\chi_{ij}^{kl}(\cal A)=\cal A_{ij}(1-\cal A_{ik})\cal A_{kl}(1-\cal A_{jl})\,.
\end{equation}
The following identity is a consequence of the uniform probability measure on $\cal A$. The proof is given in \cite[Proposition 3.1]{BHKY19}.

\begin{lemma} \label{lem2.1}
	Let $i,j,k,l$ be distinct indices. Let $F$ be a function which depends on the random graph $\cal A$, and possibly on the indices $i,j,k,l$. We have
	\[
	\bb E F(\cal A)\chi_{ij}^{kl}(\cal A)=\bb E F(\cal A+\xi_{ij}^{kl})\chi_{ik}^{jl}(\cal A)\,,
	\]
	where $\xi$ and $\chi$ are defined in \eqref{2.3} and \eqref{2.4} respectively.
\end{lemma}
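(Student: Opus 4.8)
The plan is to prove Lemma \ref{lem2.1} by carefully accounting for the action of a switching on the uniform measure over $d$-regular graphs, using only the permutation-invariance of that measure and the explicit bookkeeping of which edges are added and removed. First I would fix distinct indices $i,j,k,l$ and observe that the signed matrix $\xi_{ij}^{kl}=\Delta_{ij}+\Delta_{kl}-\Delta_{ik}-\Delta_{jl}$ is exactly the perturbation that, when applied to a graph $\cal A$ containing edges $ij$ and $kl$ but not $ik$ or $jl$, deletes $ij,kl$ and inserts $ik,jl$; crucially this operation preserves all degrees, so $\cal A+\xi_{ij}^{kl}$ is again $d$-regular whenever $\chi_{ij}^{kl}(\cal A)=1$. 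The indicator $\chi_{ij}^{kl}(\cal A)=\cal A_{ij}(1-\cal A_{ik})\cal A_{kl}(1-\cal A_{jl})$ encodes precisely the switchability condition on the source side, and $\chi_{ik}^{jl}(\cal A)=\cal A_{ik}(1-\cal A_{ij})\cal A_{jl}(1-\cal A_{kl})$ the analogous condition on the target side, so the point is that the map $\cal A \mapsto \cal A+\xi_{ij}^{kl}$ is a bijection between $\{\cal A : \chi_{ij}^{kl}(\cal A)=1\}$ and $\{\cal B : \chi_{ik}^{jl}(\cal B)=1\}$, with inverse $\cal B \mapsto \cal B + \xi_{ik}^{jl} = \cal B - \xi_{ij}^{kl}$.

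The key steps, in order, are: (1) verify the bijection claim above — one checks by a short case analysis on the relevant six entries $\cal A_{ij},\cal A_{kl},\cal A_{ik},\cal A_{jl}$ (and the unaffected ones) that $\chi_{ij}^{kl}(\cal A)=1$ implies $\chi_{ik}^{jl}(\cal A+\xi_{ij}^{kl})=1$, and symmetrically for the reverse map, with the two maps mutually inverse; (2) note that on the uniform probability space over $d$-regular graphs every such graph has equal mass, so the pushforward of the (renormalized) uniform measure restricted to $\{\chi_{ij}^{kl}=1\}$ under this bijection is the uniform measure restricted to $\{\chi_{ik}^{jl}=1\}$ — this is just counting, since both sets have the same cardinality; (3) translate this into the stated expectation identity by writing $\bb E[F(\cal A)\chi_{ij}^{kl}(\cal A)] = \frac{1}{\#\{d\text{-regular graphs}\}}\sum_{\cal A:\,\chi_{ij}^{kl}(\cal A)=1} F(\cal A)$, reindexing the sum by $\cal B = \cal A + \xi_{ij}^{kl}$ so that $\cal A = \cal B - \xi_{ij}^{kl} = \cal B + \xi_{ik}^{jl}$, relabeling, and recognizing the result as $\bb E[F(\cal A + \xi_{ik}^{jl})\chi_{ik}^{jl}(\cal A)]$; (4) finally relabel $j \leftrightarrow k$ to match the statement as written, $\bb E[F(\cal A)\chi_{ij}^{kl}(\cal A)]=\bb E[F(\cal A+\xi_{ij}^{kl})\chi_{ik}^{jl}(\cal A)]$, using that $\xi_{ij}^{kl}$ is the perturbation carrying a $\chi_{ik}^{jl}$-switchable configuration to a $\chi_{ij}^{kl}$-switchable one.

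Since this is quoted as a known result (the excerpt attributes the proof to \cite[Proposition 3.1]{BHKY19}), I would present it compactly, emphasizing the combinatorial bijection rather than reproving standard facts about the uniform measure. The main obstacle — really the only place any care is needed — is step (1): making sure the sign conventions in \eqref{2.3} are such that adding $\xi_{ij}^{kl}$ genuinely implements the edge swap on a switchable graph (and does not, say, produce an entry equal to $2$ or $-1$, which would mean the configuration was not switchable in the first place), and that the resulting graph lands exactly in the set cut out by $\chi_{ik}^{jl}$. Once that bijection is nailed down, the expectation identity is a one-line change of variables, and no genuine estimation is involved. I would also remark in passing that the hypothesis that $i,j,k,l$ be distinct is what guarantees the four $\Delta$-terms act on disjoint-enough entries for the degree-preservation and the case analysis to go through cleanly.
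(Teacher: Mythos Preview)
Your approach—the switching bijection on the uniform measure over $d$-regular graphs—is exactly the standard one, and the paper does not give its own argument but cites \cite[Proposition 3.1]{BHKY19}, where precisely this reasoning appears. So there is nothing substantively different to compare.

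That said, there is a direction error in your bookkeeping that you should correct. In step (1) you assert that $\cal A \mapsto \cal A+\xi_{ij}^{kl}$ sends $\{\chi_{ij}^{kl}=1\}$ to $\{\chi_{ik}^{jl}=1\}$. This is backwards: since $\xi_{ij}^{kl}=\Delta_{ij}+\Delta_{kl}-\Delta_{ik}-\Delta_{jl}$, adding it to a graph with $\cal A_{ij}=\cal A_{kl}=1$ and $\cal A_{ik}=\cal A_{jl}=0$ produces entries $2$ and $-1$, not a graph. The correct bijection is $\cal A \mapsto \cal A+\xi_{ij}^{kl}$ from $\{\chi_{ik}^{jl}=1\}$ onto $\{\chi_{ij}^{kl}=1\}$ (you in fact state this correctly in step (4), contradicting step (1)). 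Consequently the change of variables in step (3) should be $\cal B=\cal A-\xi_{ij}^{kl}$, giving $\cal A=\cal B+\xi_{ij}^{kl}$ with $\cal B\in\{\chi_{ik}^{jl}=1\}$; the lemma then follows in one line after renaming $\cal B\to\cal A$, with no need for step (4). Note also that $\xi_{ik}^{jl}=-\xi_{ij}^{kl}$, so the expression $\bb E[F(\cal A+\xi_{ik}^{jl})\chi_{ik}^{jl}(\cal A)]$ you reach at the end of step (3) is \emph{not} the right-hand side of the lemma, and the ``relabel $j\leftrightarrow k$'' in step (4) cannot repair this since $F$ may depend on the fixed indices $i,j,k,l$.
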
 

Let us abbreviate
\begin{equation}\label{M}
	\cal M_{ij}(F(\A))\deq \max_{kl} |F(\A+\xi_{ij}^{kl})|\,.
\end{equation}
The next result improves \cite[Corollary 3.2]{BHKY19} to adapt the dense graph setting. This is the main formula we use in generating non-trivial self-consistent equations.

\begin{lemma} \label{lem2.2}
Let $i,j$ be distinct indices. Let $F$ be a function which depends on the random graph $\cal A$, and possibly on the indices $i,j$. We have
	\begin{equation*}
		\begin{aligned}
			\bb E \cal A_{ij} F(\cal A)=&\,\frac{1}{(N-d)d}\sum_{kl}\bb E \chi_{ik}^{jl}(\cal A)\big(F(\cal A+\xi_{ij}^{kl})-F(\A)\big)	+\frac{d}{N-d}\bb E F(\A)\\
			&-\frac{1}{(N-d)d}\bb E (\A^3)_{ij}F(\A)+O(N^{-1})\cdot\bb E\cal M_{ij}(F(\A))\,.
	\end{aligned}
\end{equation*}
We often refer the last term above as the \emph{remainder term}.
\end{lemma}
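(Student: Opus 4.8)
The plan is to start from the switching identity in Lemma \ref{lem2.1} and unfold it so that the combinatorial normalization produces the claimed self-consistent structure. First I would count switchable pairs: for fixed distinct $i,j$ with $\cal A_{ij}=1$, the number of pairs $k,l$ for which $\chi_{ij}^{kl}(\cal A)=1$ is $\cal A_{ij}\cdot\#\{k:\cal A_{ik}=0\}\cdot\#\{l: \cal A_{kl}=1,\ \cal A_{jl}=0\}$, which after expanding the indicators equals $(N-d)d$ up to corrections coming from the constraints that $k,l$ avoid $\{i,j\}$, from double-counting when $l$ is adjacent to both $k$ and $j$, and from the event $\cal A_{jk}=1$ or $k=l$. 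Summing the identity $\bb E[F(\cal A)\chi_{ij}^{kl}]=\bb E[F(\cal A+\xi_{ij}^{kl})\chi_{ik}^{jl}]$ over all such $k,l$, the left side is $(N-d)d\,\bb E[\cal A_{ij}F(\cal A)]$ plus lower-order terms, and the right side is $\sum_{kl}\bb E[\chi_{ik}^{jl}(\cal A)F(\cal A+\xi_{ij}^{kl})]$. Rearranging and dividing by $(N-d)d$ gives the first term of the statement once we split $F(\cal A+\xi_{ij}^{kl})=\big(F(\cal A+\xi_{ij}^{kl})-F(\cal A)\big)+F(\cal A)$.

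The second and third main terms come from evaluating $\frac{1}{(N-d)d}\sum_{kl}\bb E[\chi_{ik}^{jl}(\cal A)F(\cal A)]$: here $\sum_{kl}\chi_{ik}^{jl}(\cal A)=\sum_{kl}\cal A_{ik}(1-\cal A_{ij})\cal A_{jl}(1-\cal A_{kl})$ — wait, more precisely $\chi_{ik}^{jl}=\cal A_{ik}(1-\cal A_{ij})\cal A_{jl}(1-\cal A_{kl})$, so summing over $k,l$ and using the regularity $\sum_k \cal A_{ik}=d$ gives roughly $(1-\cal A_{ij})(d^2-(\cal A^3)_{ij}-\dots)$ after subtracting the diagonal term $\sum_{kl}\cal A_{ik}\cal A_{kl}\cal A_{lj}=(\cal A^3)_{ij}$; dividing by $(N-d)d$ and noting $d^2/((N-d)d)=d/(N-d)$ and $(1-\cal A_{ij})=1-\cal A_{ij}$ reproduces the $\frac{d}{N-d}\bb E F(\cal A)$ and $-\frac{1}{(N-d)d}\bb E(\cal A^3)_{ij}F(\cal A)$ terms, with the $\cal A_{ij}$-dependent part $-\frac{d}{N-d}\bb E[\cal A_{ij}F(\cal A)]$ being itself of order $N^{-1}\bb E|F|$ and hence absorbable into the remainder (this uses $d\leq N/2$, so $d/(N-d)\leq 1$ but the factor $\cal A_{ij}$ is still controlled by a single switching move). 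All the "lower-order" corrections — from forbidden coincidences among $i,j,k,l$, from over- or under-counting switchable pairs, and from the terms where $\xi_{ij}^{kl}$ is applied but the corresponding count is off by $O(N)$ rather than $O(N^2)$ — are bounded by $O(N^{-1})\bb E\,\cal M_{ij}(F(\cal A))$, since each such correction replaces one factor of $(N-d)d\asymp N^2$-many terms by $O(N)$-many terms and on each the integrand is $F$ evaluated at $\cal A$ or at a single switching of $\cal A$, hence $\leq \cal M_{ij}(F(\cal A))$ in absolute value.

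The key improvement over \cite[Corollary 3.2]{BHKY19}, and the step I expect to be the real work, is controlling the $\xi$-shifted sum $\frac{1}{(N-d)d}\sum_{kl}\chi_{ik}^{jl}(\cal A)\big(F(\cal A+\xi_{ij}^{kl})-F(\cal A)\big)$ in a way that does not lose a factor of $d$: in \cite{BHKY19} one uses $1-\cal A_{xy}\approx 1$ freely, which is lossy when $d\gg N^{2/3}$, whereas here one must keep the factors $(1-\cal A_{ij})$ and $(1-\cal A_{kl})$ exact inside $\chi_{ik}^{jl}$ and only simplify the constraints $k,l\notin\{i,j\}$ and $k\neq l$, each of which costs $O(N)$ terms and thus $O(N^{-1})\bb E\,\cal M_{ij}(F)$ after normalization. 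I would organize this as: (i) write $\chi_{ik}^{jl}=\cal A_{ik}\cal A_{jl}-\cal A_{ik}\cal A_{jl}\cal A_{kl}-\cal A_{ik}\cal A_{jl}\cal A_{ij}+\cal A_{ik}\cal A_{jl}\cal A_{kl}\cal A_{ij}$ and treat the four pieces; (ii) in the leading piece $\cal A_{ik}\cal A_{jl}$, the constraint-free sum over $k,l$ is exactly $d^2$-ish and pairs with the first main term; (iii) each of the remaining three pieces, together with the diagonal/coincidence corrections, is shown to contribute either the $(\cal A^3)_{ij}$ term or a remainder; crucially nowhere does one divide a quantity of size $O(N)$ by $d$ alone. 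Assembling (i)–(iii) and collecting the error terms as $O(N^{-1})\bb E\,\cal M_{ij}(F(\cal A))$ yields the claimed formula.
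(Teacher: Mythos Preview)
Your overall architecture is right --- sum Lemma~\ref{lem2.1} over $k,l$, split $F(\cal A+\xi_{ij}^{kl})$ into a difference plus $F(\cal A)$, and evaluate the deterministic sums $\sum_{kl}\chi_{ij}^{kl}$ and $\sum_{kl}\chi_{ik}^{jl}$ using regularity --- and in fact leads to the lemma. But there is a genuine gap in how you dispose of the ``$\cal A_{ij}$-dependent'' pieces.

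You write that on the left $\sum_{kl}\chi_{ij}^{kl}F=(N-d)d\,\cal A_{ij}F$ ``plus lower-order terms'', and on the right the stray term $-\tfrac{d}{N-d}\bb E[\cal A_{ij}F(\cal A)]$ is ``of order $N^{-1}\bb E|F|$ and hence absorbable''. Both claims are false in the dense regime. Explicitly, $\sum_{kl}\chi_{ij}^{kl}=\cal A_{ij}\big((N-d)d-d^2+(\cal A^3)_{ij}\big)$ and $\sum_{kl}\chi_{ik}^{jl}=(1-\cal A_{ij})\big(d^2-(\cal A^3)_{ij}\big)$; when $d\asymp N$ the extra pieces $-d^2\cal A_{ij}$ and $-\tfrac{d}{N-d}\cal A_{ij}$ are of the same order as the main terms (take $F\equiv1$: then $\tfrac{d}{N-d}\bb E\cal A_{ij}\asymp d^2/N^2$, not $O(N^{-1})$). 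They cannot be thrown into the $O(N^{-1})\bb E\,\cal M_{ij}(F)$ remainder.

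What actually happens is an \emph{exact cancellation}: combining the two displays above,
\[
\cal A_{ij}\big(d^2-(\cal A^3)_{ij}\big)+(1-\cal A_{ij})\big(d^2-(\cal A^3)_{ij}\big)=d^2-(\cal A^3)_{ij},
\]
so after dividing by $(N-d)d$ one obtains precisely $\tfrac{d}{N-d}\bb E F-\tfrac{1}{(N-d)d}\bb E(\cal A^3)_{ij}F$ with no residual $\cal A_{ij}$. The paper organizes this same cancellation differently: it inserts the exact identity $\sum_{kl}(1-\cal A_{ik})\cal A_{kl}=(N-d)d$ into $\bb E[\cal A_{ij}F]$, splits off $\chi_{ij}^{kl}$ via $1=(1-\cal A_{jl})+\cal A_{jl}$ keeping the leftover $\cal A_{ij}(1-\cal A_{ik})\cal A_{kl}\cal A_{jl}$ term, and then uses the algebraic identity \eqref{2.8} to rewrite $\sum_{kl}\chi_{ik}^{jl}F(\cal A)$ as $\sum_{kl}(1-\cal A_{ij})(1-\cal A_{ik})\cal A_{kl}\cal A_{jl}F(\cal A)$, which combines with the leftover to give $\sum_{kl}(1-\cal A_{ik})\cal A_{kl}\cal A_{jl}F=d^2F-(\cal A^3)_{ij}F$. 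Either route works, but you must track these terms exactly rather than declare them small. Once you do, your proof goes through; the genuine $O(N^{-1})\bb E\,\cal M_{ij}(F)$ errors come only from the $O(N)$ many coincidences $k,l\in\{i,j\}$ or $k=l$, exactly as you describe.

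A minor point: your third paragraph conflates proving the lemma with using it. The term $\tfrac{1}{(N-d)d}\sum_{kl}\chi_{ik}^{jl}\big(F(\cal A+\xi_{ij}^{kl})-F(\cal A)\big)$ appears \emph{as is} in the conclusion; nothing about it needs to be ``controlled'' to establish the identity.
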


\begin{proof}
	Since $\sum_{k} \cal A_{ik}=\sum_{l}\cal A_{kl}=d$, we have
	\begin{equation} \label{2.6}
		\begin{aligned}
		\bb E \cal A_{ij}F(\cal A )&=\frac{1}{(N-d)d}\sum_{kl}\bb E \cal A_{ij}(1-\cal A_{ik})\cal A_{kl}F(\cal A)\\
		&=\frac{1}{(N-d)d}\sum_{kl}\bb E \chi_{ij}^{kl}(\cal A)F(\cal A)+\frac{1}{(N-d)d}\sum_{kl}\bb E \cal A_{ij}(1-\cal A_{ik})\cal A_{kl}\cal A_{jl}F(\cal A)\,,
	\end{aligned}
	\end{equation}
where in the second step we used $1=(1-\cal A_{jl})+\cal A_{jl}$. By Lemma \ref{lem2.1} and $\chi_{ij}^{kl}(\A)\leq \A_{ij}\A_{kl}$,
	\begin{align}
\frac{1}{(N-d)d}\sum_{kl}\bb E \chi_{ij}^{kl}(\cal A)F(\cal A)&=\frac{1}{(N-d)d}\sum_{\substack {kl:
i,j,k,l\\ \emph{distinct}}}\bb E \chi_{ij}^{kl}(\cal A)F(\cal A)+O\Big(\frac{1}{Nd}\Big)\cdot\hspace{-0.2cm}\sum_{\substack {kl:
i,j,k,l\\ \emph{not distinct}}} \bb E |\cal A_{ij}\A_{kl}F(\A)|\nonumber \\ 
&=\frac{1}{(N-d)d}\sum_{\substack {kl:
		i,j,k,l\\ \emph{distinct}}}\bb E \chi_{ik}^{jl}(\cal A)F(\cal A+\xi_{ij}^{kl})+O(N^{-1})\cdot \bb E|F(\A)|\label{2.7}\\
	&=\frac{1}{(N-d)d}\sum_{kl}\bb E \chi_{ik}^{jl}(\cal A)F(\cal A+\xi_{ij}^{kl})+O(N^{-1})\cdot\bb E	\cal M_{ij}(F(\A))\,. \nonumber
\end{align}
Moreover, note that
\begin{equation} \label{2.8}
	\begin{aligned}
&\,\frac{1}{(N-d)d}\sum_{kl}\bb E \chi_{ik}^{jl}(\cal A)F(\cal A)=\frac{1}{(N-d)d}\sum_{kl}\bb E (1-\A_{ij})\A_{ik}(1-\A_{kl})\A_{jl}F(\cal A)\\
	=&\,-\frac{1}{(N-d)d}\sum_{kl}\bb E (1-\A_{ij})(1-\A_{ik})(1-\A_{kl})\A_{jl}F(\cal A)+\bb E (1-\A_{ij})F(\A)\\
	=&\,\frac{1}{(N-d)d}\sum_{kl}\bb E (1-\A_{ij})(1-\A_{ik})\A_{kl}\A_{jl}F(\cal A)\,,
	\end{aligned}
\end{equation}
where in the second and third steps we used $\sum_{kl}(1-\A_{kl})\A_{jl}=\sum_{kl}(1-\A_{ik})\A_{jl}=(N-d)d$. Combining \eqref{2.6} - \eqref{2.8} we get
\begin{equation*}
\begin{aligned}
		\bb E \cal A_{ij}F(\cal A )=&\ \frac{1}{(N-d)d}\sum_{kl}\bb E \chi_{ik}^{jl}(\cal A)\big(F(\cal A+\xi_{ij}^{kl})-F(\A)\big)\\
		&+\frac{1}{(N-d)d}\sum_{kl}\bb E (1-\A_{ij})(1-\A_{ik})\A_{kl}\A_{jl}F(\cal A) \\
		&+\frac{1}{(N-d)d}\sum_{kl}\bb E \cal A_{ij}(1-\cal A_{ik})\cal A_{kl}\cal A_{jl}F(\cal A)+O(N^{-1})\cdot\bb E	\cal M_{ij}(F(\A))\\
		=&\ \frac{1}{(N-d)d}\sum_{kl}\bb E \chi_{ik}^{jl}(\cal A)\big(F(\cal A+\xi_{ij}^{kl})-F(\A)\big)\\
		&+\frac{1}{(N-d)d}\sum_{kl}\bb E (1-\cal A_{ik})\cal A_{kl}\cal A_{jl}F(\cal A)+O(N^{-1})\cdot\bb E	\cal M_{ij}(F(\A))\,.
\end{aligned}
\end{equation*}
Applying $\sum_{kl}(1-\cal A_{ik})\cal A_{kl}\cal A_{jl}=d^2-(\A^3)_{ij}$ to the second term on the RHS, we get the desired result.
\end{proof}

\section{Powers of $\A$: large deviations} \label{sec3}
Let us abbreviate the discrete derivative for any indices $i,j,k,l$ by
\begin{equation} \label{2.9}
	\D_{ij}^{kl}F(\cal A)\deq F(\A +\xi_{ij}^{kl})-F(\A)
\end{equation} 
where $\xi_{ij}^{kl}$ was defined as in \eqref{2.3}. It satisfies the discrete product rule
\begin{equation} \label{3.2}
	\D_{ij}^{kl}(FK)=\D_{ij}^{kl}(F)K+F\D_{ij}^{kl}(K)+\D_{ij}^{kl}(F)\D_{ij}^{kl}(K)\,,
\end{equation}
and 
\begin{equation*}
	\D_{ij}^{kl}(\A)=\xi_{ij}^{kl}\,.
\end{equation*}
We have the following result.

\begin{proposition} \label{prop4.4}
Fix $\tau>0$ and assume $N^{\tau} \leq d \leq N/2$. We have
\begin{equation} \label{1}
	(\A^2)_{ij} -d^2N^{-1} \prec 1+dN^{-1/2}
\end{equation}
uniformly for $i \ne j$, and 
\begin{equation} \label{2}
	(\A^3)_{ij} -d^3N^{-1} \prec d+d^2N^{-1/2}
\end{equation}
uniformly in $i,j$. For fixed integer $r\geq 4$, we have
\begin{equation} \label{3}
	(\A^r)_{ij} -d^rN^{-1} \prec d^{r-2}+Nd^{r-4}
\end{equation}
uniformly in $i,j$.
\end{proposition}
\begin{proof}
(i) Fixed an integer $r \geq 2$. In this step we shall prove
	\begin{equation} \label{3.5}
		(\A^{2r})_{ii}-d^{2r}N^{-1} \prec d^{2r-2}+d^{2r-1}N^{-1/2}
	\end{equation}
uniformly in $i$. By $\sum_{j}(\A^r)_{ij}=d^r$, we have
\begin{equation} \label{3.33}
	0\leq \sum_{j}\big((\A^r)_{ij}-d^rN^{-1}\big)^2=(\A^{2r})_{ii}-d^{2r}N^{-1}\,,
\end{equation}
thus $\cal R_r\deq	(\A^{2r})_{ii}-d^{2r}N^{-1}\geq 0$. Similarly, $\cal R_{r+1}\deq 	(\A^{2r+2})_{ii}-d^{2r+2}N^{-1}\geq 0$. Fix $n \geq 1$. As $A_{ii}=0$, we have
\begin{equation} \label{rockstar}
	\bb E \cal R_r^n=-\frac{d^{2r}}{N}\bb E \cal R_r^{n-1}+\sum_{j} \bb E \A_{ij}(\A^{2r-1})_{ji}\cal R_r^{n-1}=-\frac{d^{2r}}{N}\bb E \cal R_r^{n-1}+\sum_{j:j\ne i} \bb E \A_{ij}(\A^{2r-1})_{ji}\cal R_r^{n-1}\,.
\end{equation}
Applying Lemma \ref{lem2.2} to the last term on RHS of \eqref{rockstar}, with $F(\A)=(\A^{2r-1})_{ji}\cal R_r^{n-1}$, we get
	\begin{align*}
	\bb E \cal R_r^n	=&\,-\frac{d^{2r}}{N}\bb E \cal R_r^{n-1}+\frac{1}{(N-d)d}\sum_{jkl:j \ne i}\bb E \chi_{ik}^{jl}(\cal A)\D_{ij}^{kl}((\A^{2r-1})_{ji}\cal R_r^{n-1})	+\frac{d}{N-d}\sum_{j:j\ne i}\bb E (\A^{2r-1})_{ji}\cal R_r^{n-1}\nonumber\\
		&-\frac{1}{(N-d)d}\sum_{j:j\ne i}\bb E (\A^3)_{ij}(\A^{2r-1})_{ji}\cal R_r^{n-1}+O(N^{-1})\cdot\sum_{j:j\ne i}\bb E\cal M_{ij}((\A^{2r-1})_{ji}\cal R_r^{n-1})\,.
	\end{align*}
As $\max_{ij}(\cal A^{r})_{ij}\leq \max_i \sum_k (\cal A^{r-1})_{ik}=d^{r-1}$ for all $r \geq 2$, we can easily remove the restraint $j\ne i$ in the third and fourth term on RHS of the above, by observing that
\[
	\frac{d}{N-d}\bb E (\cal A^{2r-1})_{ii}\cal R_r^{n-1}=O( d^{2r-2})\cdot\bb E \cal R_r^{n-1}
\quad \mbox{and} \quad 
\frac{1}{(N-d)d}\bb E (\A^3)_{ii}(\A^{2r-1})_{ii}\cal R_r^{n-1}=O( d^{2r-2})\cdot\bb E \cal R^{n-1}_r\,.
\]
As a result,
\begin{align}
	\bb E \cal R_r^n	=&\,-\frac{d^{2r}}{N}\bb E \cal R_r^{n-1}+\frac{1}{(N-d)d}\sum_{jkl:j \ne i}\bb E \chi_{ik}^{jl}(\cal A)\D_{ij}^{kl}((\A^{2r-1})_{ji}\cal R_r^{n-1})	+\frac{d}{N-d}\sum_{j}\bb E (\A^{2r-1})_{ji}\cal R_r^{n-1}\nonumber\\
	&-\frac{1}{(N-d)d}\sum_{j}\bb E (\A^3)_{ij}(\A^{2r-1})_{ji}\cal R_r^{n-1}+O(N^{-1})\cdot\sum_{j:j\ne i}\bb E\cal M_{ij}((\A^{2r-1})_{ji}\cal R_r^{n-1})+O( d^{2r-2})\cdot\bb E \cal R^{n-1}_r\nonumber\\
	\leq &\,-\frac{d^{2r}}{N}\bb E \cal R_r^{n-1}+\frac{1}{(N-d)d}\sum_{jkl:j\ne i}\bb E \chi_{ik}^{jl}(\cal A)\D_{ij}^{kl}((\A^{2r-1})_{ji}\cal R_r^{n-1})	+\frac{d^{2r}}{N-d}\bb E \cal R_r^{n-1}\label{3.7}\\
	&-\frac{d^{2r+1}}{(N-d)N}\bb E \cal R_r^{n-1}+O(N^{-1})\cdot\sum_{j:j\ne i}\bb E\cal M_{ij}((\A^{2r-1})_{ji}\cal R_r^{n-1})+O( d^{2r-2})\cdot\bb E \cal R^{n-1}_r\nonumber\\
	=&\,\frac{1}{(N-d)d}\sum_{jkl:j\ne i}\bb E \chi_{ik}^{jl}(\cal A)\D_{ij}^{kl}((\A^{2r-1})_{ji}\cal R_r^{n-1})+O(N^{-1})\cdot\sum_{j:j\ne i}\bb E\cal M_{ij}((\A^{2r-1})_{ji}\cal R_r^{n-1})\nonumber\\
	&\,+O( d^{2r-2})\cdot\bb E \cal R^{n-1}_r\eqd R_1+R_2+O( d^{2r-2})\cdot\bb E \cal R^{n-1}_r\nonumber\,,
\end{align}
where in the second step we used 
\[
\sum_j (\A^3)_{ij}(\A^{2r-1})_{ji}=(\A^{2r+2})_{ii}=\cal R_{r+1}+d^{2r+2}N^{-1}\geq d^{2r+2}N^{-1}
\]
and $\cal R_r,\cal R_{r+1} \geq 0$, and in the third step we have a cancellation among the three terms involving $\bb E \cal R_{r}^{n-1}$. To estimate the RHS of \eqref{3.7}, note that
\begin{equation} \label{3.11}
(\A^{2r-1})_{ij} \leq d^{2r-2}\,, \quad \max_{jkl}\D_{ij}^{kl} (\A^{2r-1})_{ji} =O(d^{2r-3})\,, \quad \mbox{and} \quad \D_{ij}^{kl} \cal R_r =O( d^{2r-2})\,,
\end{equation}
and together with the product rule \eqref{3.2} and $(N-d)^{-1}\leq 2N^{-1}$,  the term $R_1$ can be bounded (up to a constant factor) by
\begin{equation*} 
\begin{aligned}
& \frac{1}{Nd}\sum_{jkl}\bb E \chi_{ik}^{jl}(\cal A)(\A^{2r-1})_{ji} |\D_{ij}^{kl}\cal R_r^{n-1}|+ \frac{1}{Nd}\sum_{jkl}\bb E \chi_{ik}^{jl}(\cal A)|\D_{ij}^{kl}(\A^{2r-1})_{ji}|( |\D_{ij}^{kl}\cal R_r^{n-1}|+\cal R_r^{n-1})\\
\prec&\,\frac{1}{Nd}\sum_{jkl}\bb E\bigg[ \chi_{ik}^{jl}(\A)(\A^{2r-1})_{ji} \sum_{s=2}^n d^{(2r-2)(s-1)}\cal R_{r}^{n-s}\bigg]+\frac{1}{Nd}\sum_{jkl}\bb E\bigg[ \chi_{ik}^{jl}(\A)d^{2r-3} \sum_{s=1}^n d^{(2r-2)(s-1)}\cal R_{r}^{n-s}\bigg]
\end{aligned}
\end{equation*}
Note that
\[
\sum_{jkl}\chi_{ik}^{jl}(\A)(\A^{2r-1})_{ji} \leq \sum_{jkl}\A_{ik}\A_{jl}(\A^{2r-1})_{ji} = d^2\sum_{j}(\A^{2r-1})_{ji}=d^{2r+1}\ \ \mbox{and} \ \ \sum_{jkl}\chi_{ik}^{jl}(\A)\leq d^2N\,,
\] 
which implies
\begin{equation} \label{3.8}
R_1\prec \sum_{s=1}^n\big(d^{(2r-2)}+d^{2r-1}N^{-1/2}\big)^s\bb E \cal R_r^{n-s}\,.
\end{equation}
More over, by \eqref{3.2} and \eqref{3.11}, it is easy to check that
\begin{equation} \label{et}
\cal M_{ij}((\A^{2r-1})_{ij}\cal R_{r}^{n-1})\leq |(\A^{2r-1})_{ij}\cal R_{r}^{n-1}|+\max_{kl}|\D_{ij}^{kl} ((\A^{2r-1})_{ij}\cal R_{r}^{n-1})| \prec \sum_{s=1}^nd^{(2r-2)s}\bb E \cal R_r^{n-s}\,.
\end{equation}
Hence we have
\begin{equation} \label{3.9}
	R_2 \prec \sum_{s=1}^nd^{(2r-2)s}\bb E \cal R_r^{n-s}\,.
\end{equation}
Combining \eqref{3.7}, \eqref{3.8} and \eqref{3.9}, we get 
\[
\bb E \cal R_r^n \prec \sum_{s=1}^n\big(d^{(2r-2)}+d^{2r-1}N^{-1/2}\big)^s\bb E \cal R_r^{n-s}\leq \sum_{s=1}^n\big(d^{(2r-2)}+d^{2r-1}N^{-1/2}\big)^s(\bb E \cal R_r^{n})^{(n-s)/n}
\]
which implies \eqref{3.5} as desired.

(ii) Fix an integer $r\geq 4$. In this step we shall show that
\begin{equation} \label{3.111}
	(\cal A^{r})_{ij}-d^rN^{-1}\prec d^{r-2}+d^{r-1}N^{-1/2} \,.
\end{equation}
uniformly in $i,j$. More precisely, by $\sum_k (\A^2)_{ik}=d^2$ and $\sum_k (\A^{r-2})_{kj}=d^{s-2}$ we get
\begin{equation*}
	\begin{aligned}
	\Big|	(\cal A^{r})_{ij}-d^rN^{-1}\Big|&=\Big|\sum_k \big((\A^2)_{ik}-d^2N^{-1}\big)\big((\A^{r-2})_{kj}-d^{r-2}N^{-1}\big)\Big|\\
		&\leq \Big(\sum_k \big((\A^2)_{ik}-d^2N^{-1}\big)^2\Big)^{1/2}\Big(\sum_k\big((\A^{r-2})_{kj}-d^{r-2}N^{-1}\big)^2\Big)^{1/2}\\
		&=\big((\A^4)_{ii}-d^4N^{-1}\big)^{1/2}\big((\A^{2r-4})_{jj}-d^{2r-4}N^{-1}\big)^{1/2}\,,
	\end{aligned}
\end{equation*}
and \eqref{3.111} follows from \eqref{3.5}.

(iii) In this step we prove \eqref{1}; the proof of \eqref{2} follows in a similar fashion. Let us denote $\cal S\deq (\A^2)_{ij}-d^2N^{-1}$ for some $i \ne j$. Fix $n \geq 1$. Using Lemma \ref{lem2.2} with $F(\cal A)=\A_{kj}\cal S^{2n-1}$, we have
\begin{equation*}
	\begin{aligned}
\mathbb E \cal S^{2n}&=-d^2N^{-1}\mathbb E \cal S^{2n-1}+\sum_{k} \bb E \A_{ik}\A_{kj} \cal S^{2n-1}\\
&=-d^2N^{-1}\mathbb E \cal S^{2n-1}+\frac{1}{(N-d)d}\sum_{klu:k\ne i}\bb E\chi_{il}^{ku}(\cal A)\D_{ik}^{lu}(\A_{kj}\cal S^{2n-1})+\frac{d}{N-d}\sum_{k:k\ne i}\bb E\A_{kj} \cal S^{2n-1}\\
&\quad\,-\frac{1}{(N-d)d}\sum_{k:k\ne i}\bb E (\A^3)_{ik}\A_{kj}\cal S^{2n-1}+O(N^{-1})\cdot \sum_{k:k\ne i}\cal M_{ik}(\A_{kj} \cal S^{2n-1})\,.
\end{aligned}
\end{equation*}
Similar as in \eqref{3.7} and \eqref{et}, we can remove the restriction $k\ne i$ and estimate the error term in the above, and get
\begin{equation} \label{3.13}
	\begin{aligned}
		\mathbb E \cal S^{2n}
		&=-d^2N^{-1}\mathbb E \cal S^{2n-1}+\frac{1}{(N-d)d}\sum_{klu}\bb E\chi_{il}^{ku}(\cal A)\D_{ik}^{lu}(\A_{kj}\cal S^{2n-1})+\frac{d^2}{N-d}\bb  \cal S^{2n-1}\\
		&\quad\,-\frac{1}{(N-d)d}\bb E (\A^4)_{ij}\cal S^{2n-1}+\sum_{s=1}^{2n}O_{\prec}(1)\cdot\bb E |\cal S^{2n-s}|\,.
	\end{aligned}
\end{equation}
The second term on RHS of \eqref{3.13} can be bounded (up to a constant factor) by
\[
\frac{1}{Nd}\sum_{klu} \bb E \A_{ku}\A_{il} \A_{kj}|\D_{ik}^{ku}\cal S^{2n-1}|+\frac{1}{Nd}\sum_{klu} \bb E |\A_{ku}\A_{il} \D_{ik}^{lu}(\A_{kj})| \big(|\D_{ik}^{ku}\cal S^{2n-1}|+|\cal S^{2n-1}|\big)\,.
\]
Since $i \ne j$, we have $|\D_{ik}^{lu}(\A_{kj})|=O( \delta_{uj}+\delta_{lj}+\delta_{lk}+\delta_{kj}$). Together with the trivial bound $\D_{ik}^{ku}\cal S =O(1)$, we can get the estimate
\begin{equation} \label{3.15}
	\frac{1}{(N-d)d}\sum_{klu}\bb E\chi_{il}^{ku}(\cal A)\D_{ik}^{lu}(\A_{kj}\cal S^{2n-1})\prec\sum_{s=1}^{2n} (1+dN^{-1/2})^s \cdot\bb E |\cal S^{2n-s}|\,.
\end{equation}
Using \eqref{3.111} with $r=4$, we get
\begin{equation} \label{3.17}
	-\frac{1}{(N-d)d}\bb E (\A^4)_{ij}\cal S^{2n-1}=-\frac{d^3}{(N-d)N} \bb E \cal S^{2n-1}+O_\prec(1+dN^{-1/2})\cdot \bb E |\cal S^{2n-1}|\,.
\end{equation}
Combining \eqref{3.13} -- \eqref{3.17} we get
\[
\mathbb E \cal S^{2n}\prec\sum_{s=1}^{2n} (1+dN^{-1/2})^s \cdot\bb E |\cal S^{2n-s}| \leq \sum_{s=1}^{2n} (1+dN^{-1/2})^s \cdot\big(\bb E \cal S^{2n}\big)^{\frac{2n-s}{2n}}\,,
\]
which implies the desired result.

(iv) The proof of \eqref{3} follows from the relation
\begin{equation*}
\begin{aligned}
&\big|(\cal A^{r})_{ij}-d^rN^{-1}\big|\\=&\,\Big|\sum_k \big((\A^2)_{ik}-d^2N^{-1}\big)\big((\A^{r-2})_{kj}-d^{r-2}N^{-1}\big)\Big|\\
\leq&\,  \Big|\sum_{k:k\ne i,j} \big((\A^2)_{ik}-d^2N^{-1}\big)\big((\A^{r-2})_{kj}-d^{r-2}N^{-1}\big)\Big|\\
& +\Big|\big((\A^2)_{ii}-d^2N^{-1}\big)\big((\A^{r-2})_{ij}-d^{r-2}N^{-1}\big)\Big|+\Big|\big((\A^2)_{ij}-d^2N^{-1}\big)\big((\A^{r-2})_{jj}-d^{r-2}N^{-1}\big)\Big|
\end{aligned}
\end{equation*}
and \eqref{1}, \eqref{2}, as well as the trivial bounds $\max_{xy}(\A^2)_{xy}\leq d$, $\max_{xy}(\A^{r-2})_{xy}\leq d^{r-3}$.
\end{proof}

\section{Green function and local semicircle law} \label{sec4}
For the rest of this paper, we shall use the parameter
\begin{equation} \label{3.1}
	q\deq \sqrt{d(N-d)/N}\,.
\end{equation}
Note that $q\asymp \sqrt{d}$ for $d \leq N/2$.
Let us define the projection $P_{\bot}\deq I-\b e \b e^*$, where $\b e=N^{-1/2}(1,...,1)^*$. For $z \in \bb C$ with $\im z>0$, we define the Green function by
\[
G\equiv G(z)\deq P_\bot (q^{-1}\A-z)^{-1} P_\bot\,.
\]
The projection $P_\bot$ was introduced in \cite{BHKY19} to eliminate the large, though trivial impact of $\lambda_1$ in the computations. As a result, the eigenvalues of $G$ are $(q^{-1}\lambda_2-z)^{-1}$,$(q^{-1}\lambda_3-z)^{-1}$,...,$(q^{-1}\lambda_N-z)^{-1}$ and 0. It is easy to check that
\begin{equation} \label{2.1}
	P_\bot \A=\A P_\bot\,,\ G\A=\A G=q(zG+I-\b e \b e^*)\,, \ \mbox{and}\ \sum_i G_{ij}=\sum_j G_{ij}=0\,.
\end{equation}
For $M \in \bb C^{N\times N}$, we denote its normalized trace by $\ul{M}\deq N^{-1}\tr M$. For $x \in \bb R$ and $z \in \bb C$ with $\im z>0$, we denote the semicircle law and its Stieltjes transform by
\[
\varrho(x)=\frac{1}{2\pi}\sqrt{(4-x^2)_+} \quad \mbox{and} \quad m(z)\deq \int \frac{\varrho(x)}{x-z} \,\dd x
\]
respectively. The quantity $m\equiv m(z)$ satisfies $1+zm+m^2=0$. In addition, we have the Ward identity
\begin{equation} \label{ward}
	\sum_{i}|G_{ij}|^2=\sum_{i}G_{ij}G^*_{ji}=(GG^*)_{jj}=\frac{\im G_{jj}}{\eta}\leq \frac{|G_{jj}- m|+\im m}{\eta}\,.
\end{equation}
In the sequel, it is convenient to use the following continuous derivative for any indices $i,j,k,l$,
\begin{equation} \label{4.41}
 \partial_{ij}^{kl}F(\cal A)\deq q\,\partial_t F(\A+t\xi_{ij}^{kl}) \big|_{t=0}	\,,
\end{equation} 
and by Taylor expansion we have
\begin{equation} \label{4.3}
	\D_{ij}^{kl}F(\A)=\sum_{s=1}^\ell \frac{1}{s!q^s}\big(\partial_{ij}^{kl}\big)^{s} F(\A)+\frac{1}{(\ell+1)!q^{\ell+1}}\big(\partial_{ij}^{kl}\big)^{\ell+1}F(\A+\theta \xi_{ij}^{kl})
\end{equation}
for some $\theta \in [0,1]$. We have the differential rule
\begin{equation} \label{diff}
	\partial_{ij}^{kl}G_{xy}=-G_{xi}G_{jy}-G_{xj}G_{iy}-G_{xk}G_{ly}-G_{xl}G_{ky}+G_{xi}G_{ky}+G_{xk}G_{iy}+G_{xj}G_{ly}+G_{xl}G_{jy}\,.
\end{equation}

The following lemma will be useful in our estimates.
\begin{lemma} \label{lemAG}
	Fix $r \in \mathbb N_+$. Suppose $z=O(1)$, then
	\[
	\max_{ij}|(\cal A^rG(z))_{ij}| \prec (d^{r/2}+d^{r-3/2})\Big(\max_{ij}|G(z)_{ij}|+1\Big)\,.
	\] 
\end{lemma}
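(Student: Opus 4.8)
The plan is to induct on $r$, using the resolvent identity encoded in \eqref{2.1}, namely $\A G = q(zG + I - \b e\b e^*)$, to trade one power of $\A$ for a factor of $q\asymp \sqrt d$ together with lower-order Green function terms. Writing $(\A^r G)_{ij} = (\A^{r-1}(\A G))_{ij} = q\,(\A^{r-1}(zG + I - \b e\b e^*))_{ij}$, the term $z\A^{r-1}G$ is handled by induction and contributes $q\cdot(d^{(r-1)/2}+d^{r-5/2})\max_{ij}|G_{ij}|$, which is of the claimed size $d^{r/2}+d^{r-3/2}$ since $q\asymp\sqrt d$. The term $q(\A^{r-1})_{ij}$ must be compared against the right-hand side; here I would invoke Proposition \ref{prop4.4} (large deviations for powers of $\A$), which gives $(\A^{r-1})_{ij} = d^{r-1}N^{-1} + O_\prec(\text{fluctuation})$, but more usefully the deterministic bound $(\A^{r-1})_{ij}\le d^{r-2}$ for $r\ge 2$ (as used in \eqref{3.11}); multiplying by $q$ yields $O(d^{r-3/2})$, which is absorbed into the stated bound provided one also knows $\max_{ij}|G_{ij}|$ is not too small. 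The third term, $q(\A^{r-1}\b e\b e^*)_{ij} = q\, d^{r-1} N^{-1}$, is likewise $O(d^{r-3/2})$ after noting $q/N \le d^{1/2}/N \ll 1$.

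The subtlety is that the bound must be phrased as a multiple of $\max_{ij}|G_{ij}|$, so the deterministic pieces coming from $q(\A^{r-1})_{ij}$ and $q(\A^{r-1}\b e\b e^*)_{ij}$ need a lower bound on $\max_{ij}|G_{ij}|$. For $z=O(1)$ with $\im z$ bounded below by a mesoscopic scale, one has $\max_{ij}|G_{ij}|\gtrsim \im \ul G \gtrsim \eta$ trivially, but more to the point, for the spectral parameters actually used in the applications one expects $\max_{ij}|G_{ij}|\gtrsim 1$ (since $|G_{ii}|$ is comparable to $|m|\asymp 1$ away from degenerate points); so the deterministic contributions $O(d^{r-3/2})$ are $\lesssim d^{r-3/2}\max_{ij}|G_{ij}|$ as needed. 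Alternatively — and this is cleaner — one proves the bound with the understanding that it is applied together with the a priori control $\|G\|\le \eta^{-1}$ and the Ward identity \eqref{ward}; I would simply carry the two deterministic terms as additive $O(d^{r-3/2})$ and only at the end absorb them, using that in every application $\max_{ij}|G_{ij}|$ is of order at least a negative power of $N$ times which the statement still holds after enlarging by $N^\epsilon$.

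Concretely, the induction base $r=1$ is exactly $\A G = q(zG + I - \b e\b e^*)$, giving $\max_{ij}|(\A G)_{ij}|\le q(|z|\max_{ij}|G_{ij}| + 1 + N^{-1}) \prec d^{1/2}\max_{ij}|G_{ij}| + d^{1/2} $, consistent with $d^{1/2}+d^{-1/2}$ up to the deterministic remainder; the inductive step repeats this with $\A^{r}G = \A^{r-1}\cdot \A G$ and distributes the three resulting terms as above, each time picking up one factor of $q\asymp d^{1/2}$ against either a Green function matrix element (inductive hypothesis) or a power of $\A$ (Proposition \ref{prop4.4} / the trivial bound $(\A^{s})_{ij}\le d^{s-1}$). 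Collecting exponents, the dominant Green-function term after $r$ steps carries $q^{?}$ only once effectively — the key point is that the resolvent identity is used once per power of $\A$ but only the \emph{last} application multiplies a genuine $G$ entry by a single $q$; the earlier factors of $\A$ are absorbed into $(\A^{r-1})_{ij}$, producing the $d^{r-3/2}$ branch, while feeding the $zG$ term back through the recursion produces the $d^{r/2}$ branch.

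The main obstacle I anticipate is bookkeeping: keeping the two branches $d^{r/2}$ and $d^{r-3/2}$ separate through the recursion and verifying that no intermediate term is larger than their maximum — in particular checking that cross terms like $q\cdot(\A^{r-2})_{ik}G_{kj}$-type expressions (which would arise if one expanded $\A^r G$ less carefully, e.g.\ via $\partial_{ij}^{kl}$ rather than the clean identity \eqref{2.1}) do not appear. Using $\A G = \A P_\bot(q^{-1}\A - z)^{-1}P_\bot = q(zG+I-\b e\b e^*)$ directly avoids this entirely, so the proof should be a short clean induction; I would not expect any genuine difficulty beyond the absorption of the two $O(d^{r-3/2})$ deterministic terms discussed above.
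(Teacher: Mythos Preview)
Your approach matches the paper's one-sentence proof exactly: apply the identity $\A G = q(zG + I - \b e\b e^*)$ from \eqref{2.1} $r$ times and estimate the resulting powers of $\A$ by the trivial deterministic bound $(\A^s)_{ij}\le d^{s-1}$---Proposition~\ref{prop4.4} is not needed. The absorption issue you flag is real (note the identity term at the last step contributes an additive $O(d^{r/2})$, not $O(d^{r-3/2})$) and the paper simply glosses over it; in every application the lemma is invoked under an a priori hypothesis such as $|G_{ii}-m|\prec\phi$ with $|m|\asymp1$, which makes $\max_{ij}|G_{ij}|$ bounded below and the absorption automatic.
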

\begin{proof}
The result follows by repeatedly applying the second relation of \eqref{2.1} $r$ times, and estimating the result using the trivial bound $\max_{ij}(\A^n)_{ij}=O(1+d^{n-1})$ and \eqref{3.1}.
\end{proof}

Fix $\delta>0$, and we define the spectral domain
\begin{equation} \label{spectral}
\b D\equiv 	\b D_{\delta}\deq \{z=E+\ii \eta: |E|\leq \delta^{-1}, N^{-1+\delta}\leq \eta\leq \delta^{-1}\}\,.
\end{equation}
The random graph $\A$ satisfies the following local semicircle law.

\begin{theorem}\label{theorem 4.1}
Assume $N^{\tau} \leq d \leq N/2$ for some fixed $\tau>0$. Fix $\delta\in (0,\tau/10)$. We have
	\[
\max_{ij}\big|	G_{ij}(z)-\delta_{ij}m(z) \big|\prec \frac{1}{(N\eta)^{1/4}}+\frac{1}{d^{1/4}}
	\]
	uniformly for $z \in \b D$.
\end{theorem}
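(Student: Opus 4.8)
The plan is to derive and then bootstrap a self-consistent equation for the Green function entries, using the integration-by-parts formula of Lemma \ref{lem2.2} as the substitute for the Gaussian/independence-based cumulant expansions that are unavailable for $\A$. Concretely, I would start from the trivial identity $(q^{-1}\A-z)G=P_\bot$, i.e.\ $q^{-1}(\A G)_{ij}-zG_{ij}=\delta_{ij}-N^{-1}$, and rewrite $(\A G)_{ij}=\sum_k \A_{ik}G_{kj}$. Applying Lemma \ref{lem2.2} to $\bb E[\A_{ik}G_{kj}]$ with $F(\A)=G_{kj}$, the main term $\frac{d}{N-d}\bb E F(\A)$ contributes $\sum_k \frac{d}{N-d}G_{kj}$, which vanishes after summation over $k$ by the last identity in \eqref{2.1}; the term $-\frac{1}{(N-d)d}(\A^3)_{ik}G_{kj}$ is controlled using Proposition \ref{prop4.4} (it is $\prec d^2 N^{-1}$ on the diagonal and its off-diagonal mass is small by \eqref{3.4}) together with Lemma \ref{lemAG}; and the remainder term $O(N^{-1})\cal M_{ij}(F)$ is handled via the stability of $G$ under a single switching (the operator norm of $\xi_{ij}^{kl}$ is $O(1)$, so on $\b D$ one has $|G(\A+\xi_{ij}^{kl})|\prec \eta^{-1}$, and with some care $\prec$ the size of $G$ itself). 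The genuinely dangerous term is the switching sum $\frac{1}{(N-d)d}\sum_{kl}\bb E\,\chi_{ik}^{jl}(\A)\D_{ij}^{kl}(G_{kj})$, which I would expand via \eqref{4.3} and the differential rule \eqref{diff}.

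Next I would carry out the expansion of $\D_{ij}^{kl}G_{kj}$ to leading order. The first-order term $\frac{1}{q}\partial_{ij}^{kl}G_{kj}$ is a sum of products of two Green function entries of the form $-q^{-1}G_{ki}G_{jj}$, $-q^{-1}G_{kk}G_{ij}$, etc., and after inserting $\chi_{ik}^{jl}(\A)\leq \A_{ik}\A_{jl}$ and summing over $k,l$ using $\sum_k\A_{ik}=\sum_l\A_{jl}=d$ one produces, among error terms, the crucial self-consistent structure: the replacement of $\sum_k\A_{ik}G_{kk}$ by $d\,\ul{G}$ (valid up to fluctuations controlled by Proposition \ref{prop4.4} and the Ward identity \eqref{ward}) yields a term proportional to $q^{-2}d^2\,\ul{G}\,G_{ij}\sim \ul{G}\,G_{ij}$, and similarly a term producing $(\ul G)^2$ on the diagonal and $m$-like structure. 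Collecting everything, one should arrive at an approximate equation of the schematic form
\begin{equation*}
\big(1+z\,\ul G + \ul G^{\,2}\big)\,G_{ij} \;=\; \text{(small error)} \;+\; \delta_{ij}\big(\text{main}\big),
\end{equation*}
together with a scalar equation $1+z\,\ul G+\ul G^{\,2}=O_\prec(\text{error})$; since $1+zm+m^2=0$ with the known stability/continuity of this cubic-type equation on $\b D$, one concludes $|\ul G-m|\prec \text{error}$ and $\max_{ij}|G_{ij}-\delta_{ij}m|\prec\text{error}$. The error sizes to track are $(N\eta)^{-1/2}$ from Ward-identity averaging (which enters as $((N\eta)^{-1/2})^{1/2}=(N\eta)^{-1/4}$ after the usual square-root loss in a weak local law of this type) and $d^{-1/2}$-type contributions from Proposition \ref{prop4.4}, the $(\A^3)_{ij}$ term, and the higher-order Taylor remainders in \eqref{4.3} (where each extra $\partial_{ij}^{kl}$ costs $q^{-1}\asymp d^{-1/2}$ but the combinatorial sum over $k,l$ restores powers of $d$, so one must check the $d$-bookkeeping carefully — this is exactly where the $d$-independence of the remainder in Lemma \ref{lem2.2} is essential, as stressed in the introduction).

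The argument should be run as a bootstrap / continuity argument in $\eta$: one first establishes an a priori bound $\max_{ij}|G_{ij}-\delta_{ij}m|\prec 1$ at large $\eta=\delta^{-1}$ (trivial) and on a fine grid of $z$, assumes the bound $\Lambda\deq\max_{ij}|G_{ij}-\delta_{ij}m|\leq N^{-\epsilon_0}$ as an induction hypothesis at a given scale, feeds it into the self-consistent equation to improve it to $\Lambda\prec (N\eta)^{-1/4}+d^{-1/4}$, and then propagates down in $\eta$ by Lipschitz continuity of $G$ in $z$. I would also need, as a preliminary input, the crude bound $\max_i|G_{ii}|\prec 1$ and $\max_{ij}|G_{ij}|\prec 1$ (otherwise the self-consistent estimates are vacuous); this typically comes from a separate, cruder run of the same scheme, or from a standard fluctuation-averaging-free argument using $\im G_{jj}/\eta\geq \sum_i|G_{ij}|^2$. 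I expect the main obstacle to be the careful estimation of the switching sum $\frac{1}{(N-d)d}\sum_{kl}\bb E\,\chi_{ik}^{jl}(\A)\D_{ij}^{kl}(G_{kj})$: isolating the genuine self-consistent terms $\ul G\,G_{ij}$ and $\ul G^{\,2}\delta_{ij}$ from the many error terms, and showing that all the errors — those from replacing $\sum_k\A_{ik}G_{kk}$ etc.\ by $d\,\ul G$ (needing Proposition \ref{prop4.4} plus a fluctuation bound), those from $\chi\le \A\A$ vs.\ the full $\chi$, those from higher Taylor orders, and those from the remainder term — are all $\prec (N\eta)^{-1/4}+d^{-1/4}$ \emph{uniformly on $\b D$ and with no hidden $d$-dependence}. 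Everything else is the now-standard machinery of the Green-function method.
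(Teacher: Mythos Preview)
Your high-level plan is the right one and matches the paper's: use Lemma~\ref{lem2.2} to derive a self-consistent equation for the entries, control the $(\A^3)$ term via Proposition~\ref{prop4.4} and Lemma~\ref{lemAG}, and close by bootstrap and stability of $1+zm+m^2=0$. One small correction: the quantity that actually emerges is $P_{ij}\deq\delta_{ij}+zG_{ij}+\ul G\,G_{ij}$ (linear in $G$), not the factorized $(1+z\ul G+\ul G^{\,2})G_{ij}$; averaging $P_{ii}$ over $i$ then gives the scalar equation.

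There is however a genuine gap. Lemma~\ref{lem2.2} is an identity in \emph{expectation}; there is no pathwise or Schur-complement version for the random regular graph. Applying it with $F(\A)=G_{kj}$, as you propose, yields only $\bb E[P_{ij}]=O(\widetilde{\cal E})$, which cannot feed a bootstrap toward the high-probability statement $P_{ij}\prec\widetilde{\cal E}$. The paper instead computes the high moments $\bb E|P|^{2n}$ by applying Lemma~\ref{lem2.2} with $F(\A)=G_{kj}P^{n-1}\bar P^{n}$, and then shows $\bb E|P|^{2n}\prec \Phi^n\widetilde{\cal E}^{\,n}$ under the a priori bound $P\prec\Phi$, which iterates down to $\widetilde{\cal E}$. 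The discrete derivative $\D_{ik}^{lx}$ then also acts on $P^{2n-1}$, producing interaction terms (the paper's $T_{1,2}$ and $T_{1,3}$) that are absent from your sketch and are the real work. In particular, from $P=q^{-1}(\A G)_{ij}+\ul G\,G_{ij}+N^{-1}$ one gets
\[
\D_{ik}^{lx}P \;=\; q^{-1}(G_{kj}-G_{lj})+\text{(small)}\,,
\]
and after multiplying by $\chi_{il}^{kx}(\A)G_{kj}$ and summing, one is left with a contribution containing $\A_{il}G_{lj}$ for which the row-sum constraint $\sum_l\A_{il}=d$ and the Ward identity over $l$ cannot be used simultaneously. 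The paper resolves this by a \emph{second} application of Lemma~\ref{lem2.2} (see the estimate of $T_{1,2,1}$). Without the moment machinery and this two-step use of Lemma~\ref{lem2.2}, your bootstrap has no high-probability input and the argument does not close.
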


For the rest of this section we prove the next result;
Theorem \ref{theorem 4.1} then follows through a standard stability analysis argument, see e.g.\,\cite[Section 4]{HKR}.

\begin{proposition} \label{prop4.2}
Assume $N^{\tau} \leq d \leq N/2$ for some fixed $\tau>0$. Fix $\delta\in (0,\tau/10)$ and $\nu \in (0,\delta/10)$. Let $z\in \b D$, and suppose that $\max_{ij}|G_{ij}-\delta_{ij}m|\prec \phi $ for some deterministic $\phi \in [N^{-1},N^{\nu}]$ at $z$. Then at $z$ we have
	\[
	\max_{ij}\big|\delta_{ij}+zG_{ij}+\ul{G}G_{ij} \big|\prec (1+\phi)^3\cdot \sqrt{\frac{\phi+\im m}{N\eta}+\frac{1}{d}}\eqd 
	\widetilde{\cal E}\,.
	\]
\end{proposition}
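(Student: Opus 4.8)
The plan is to derive the approximate self-consistent equation for $G_{ij}$ by applying the degree-insensitive integration by parts formula of Lemma~\ref{lem2.2} to the quantity $\bb E[\A_{ij} G_{jk}\cdots]$ and tracking all error terms against the a priori bound $\phi$. Concretely, start from the resolvent identity $q^{-1}\A G = zG + P_\bot$, which componentwise reads $\delta_{ij} - \b e_i \b e_j + z G_{ij} = -q^{-1}\sum_k \A_{ik} G_{kj}$. Since $\b e_i \b e_j = N^{-1}$ is negligible, the task reduces to showing $-q^{-1}\sum_k \bb E[\A_{ik} G_{kj} \cdot (\text{test function})] \approx -\ul{G}G_{ij}\cdot(\ldots)$ in the high-moment sense encoded by $\prec$. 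To do this, apply Lemma~\ref{lem2.2} with $F(\A) = G_{kj}$ (times a power of the control quantity $\Lambda \deq \max_{ij}|G_{ij}-\delta_{ij}m|$, to close a moment bootstrap as in Proposition~\ref{prop4.4}). This produces four groups of terms: (a) the main term $\tfrac{d}{N-d}\bb E F = \tfrac{d}{N-d}\bb E G_{kj}$, which after summing over $k$ and multiplying by $q^{-1}$ recombines — using $q^2 = d(N-d)/N$ and $\sum_k G_{kj}=0$ from \eqref{2.1} — into something subleading; (b) the switching term $\tfrac{1}{(N-d)d}\sum_{kl}\bb E\chi(\A)\D_{ij}^{kl}G_{kj}$; (c) the cubic correction $-\tfrac{1}{(N-d)d}\bb E(\A^3)_{ik}G_{kj}$; and (d) the remainder term $O(N^{-1})\bb E \cal M_{ij}(G_{kj})$.

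The heart of the matter is extracting $\ul{G}G_{ij}$ from the switching term (b) and showing the rest is $O_\prec(\widetilde{\cal E})$. Expand $\D_{ij}^{kl}G_{kj}$ via \eqref{4.3}–\eqref{diff}: the first-order term $q^{-1}\partial_{ij}^{kl}G_{kj}$ is a sum of products $G_{\cdot i}G_{j\cdot}$, $G_{\cdot k}G_{l\cdot}$, etc. When one pairs $\chi_{ik}^{jl}(\A)\leq \A_{ik}\A_{jl}$ with these and sums over $k,l$, the dominant contribution comes from terms like $G_{kk}G_{jj} \approx m^2$ combined with $\sum_{kl}\A_{ik}\A_{jl}\cdot(\ldots)$; the factor $\tfrac{1}{(N-d)d}\sum_{kl}\A_{ik}\A_{jl} = \tfrac{d}{N-d}$ is order $d/N$, and carefully one of these combinations reproduces $-\ul G G_{ij}$ after using $\sum_k G_{kk}=N\ul G$ and the Ward identity \eqref{ward} to handle the off-diagonal sums $\sum_k |G_{ik}|^2 \le (\im m + \phi)/\eta$. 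The fluctuating part of these sums — i.e.\ $\sum_k \A_{ik}|G_{kj}|^2$ minus its "expectation" — is controlled by a large-deviation estimate: writing $\A_{ik} = d/N + (\A_{ik} - d/N)$ and using Proposition~\ref{prop4.4} (with $r=2,3$) to replace powers of $\A$ by their means, one gains the $\sqrt{1/d}$ and $\sqrt{(\phi+\im m)/(N\eta)}$ factors. The cubic term (c) is handled by Lemma~\ref{lemAG}: $(\A^3 G)_{ij} \prec (d^{3/2}+d^{3/2})\max|G_{\cdot\cdot}| $, so $\tfrac{1}{(N-d)d}(\A^3 G)_{ij} = O_\prec(d^{1/2}N^{-1}(1+\phi))$, comfortably inside $\widetilde{\cal E}$. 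The remainder term (d) is bounded by noting $\cal M_{ij}(G_{kj})$ differs from $|G_{kj}|$ by discrete derivatives, each of which by \eqref{4.3}–\eqref{diff} is $O_\prec(1+\phi)$, so $O(N^{-1})\bb E\cal M_{ij}$ contributes $O_\prec(N^{-1}(1+\phi))$.

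The bootstrap structure is: insert $\Lambda^{2n-1}$ (or rather $|\delta_{ij}+zG_{ij}+\ul G G_{ij}|^{2n}$'s conjugate factors) as the test function, run the above through Lemma~\ref{lem2.2}, and use Young's inequality to absorb self-referential occurrences of $\Lambda$, exactly as in the proof of Proposition~\ref{prop4.4}; the output is a moment bound $\bb E|\delta_{ij}+zG_{ij}+\ul G G_{ij}|^{2n} \prec \widetilde{\cal E}^{2n}$ for every $n$, which gives the claim by Markov. The cube $(1+\phi)^3$ arises from the three factors of $G$ that appear when a second-order term $(\partial_{ij}^{kl})^2 G_{kj}$ (a product of three Green-function entries) is expanded — this is why one cannot avoid it, and why the statement is formulated with the a priori bound $\phi$ rather than bootstrapping from scratch.

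\textbf{Main obstacle.} The delicate step is the bookkeeping in (b): isolating precisely which pairing of the eight terms in \eqref{diff} regenerates $-\ul G G_{ij}$ versus which are genuinely small, while simultaneously controlling the non-independence of the $\A_{ik}$'s (their row sums are pinned to $d$). This is exactly where the dense regime bites — in the sparse case one could treat $\sum_k \A_{ik}(\ldots)$ almost as a sum of $d$ independent terms, but here the correction must instead be routed through Proposition~\ref{prop4.4}, and one must verify that the $d^{r-1}\sqrt N$ bound there is strong enough to beat the $q^{-1} \asymp d^{-1/2}$ prefactors and still land inside $\sqrt{(\phi+\im m)/(N\eta)+1/d}$. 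Getting the $1/d$ term (rather than, say, $1/\sqrt d$) in $\widetilde{\cal E}$ is what forces the use of the \emph{squared} large-deviation estimate \eqref{3.5} for $(\A^{2r})_{ii}$ rather than the cruder $\ell^1$ bound \eqref{3.4}.
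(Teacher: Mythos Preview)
Your overall architecture is correct and matches the paper: write $P_{ij}=\delta_{ij}+zG_{ij}+\ul G G_{ij}=q^{-1}(\A G)_{ij}+\ul G G_{ij}+N^{-1}$, insert $P^{n-1}\bar P^n$ as test function, apply Lemma~\ref{lem2.2} to $\bb E\A_{ik}G_{kj}P^{n-1}\bar P^n$, and identify the leading term $-G_{kk}G_{ij}$ in the expansion \eqref{diff} as the one that reproduces $-\ul G G_{ij}$ after using Proposition~\ref{prop4.4} to replace $(\A^3)_{ik}$ by $d^3/N$. The handling of the terms you label (a), (c), (d) is fine.

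There is, however, a genuine gap in how you treat (b). You only discuss the part where $\D_{ik}^{lx}$ hits $G_{kj}$; you do not analyze the \emph{interaction terms} where $\D_{ik}^{lx}$ hits the test function $P^{2n-1}$ through the product rule \eqref{3.2}. In the paper these are $T_{1,2}$ and $T_{1,3}$, and $T_{1,2}$ is the most delicate part of the whole proof. Two points you have missed:
\begin{itemize}
\item The quantity $\D_{ik}^{lx}P$ is \emph{self-similar}: expanding it via $P=q^{-1}(\A G)_{ij}+\ul G G_{ij}+N^{-1}$, one finds terms like $q^{-2}\sum_a\A_{ia}\partial_{ik}^{lx}G_{aj}+q^{-1}\ul G\,\partial_{ik}^{lx}G_{ij}$, which are of size $(1+\phi)q^{-1}P_*$ with $P_*=\max_{ij}|P_{ij}|$. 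Because of this, you cannot get $\bb E|P|^{2n}\prec\widetilde{\cal E}^{2n}$ directly as you claim. The paper instead assumes an a priori bound $P_*\prec\Phi$, proves $\bb E|P|^{2n}\prec(\Phi\widetilde{\cal E})^n$, and then iterates $\Phi\mapsto(\Phi\widetilde{\cal E})^{1/2}$. Your ``Young's inequality to absorb self-referential occurrences of $\Lambda$'' does not capture this; the self-reference is to $P$, not to $\Lambda=\max|G_{ij}-\delta_{ij}m|$.
\item Even after isolating the explicit part of $\D_{ik}^{lx}P$, one is left with a term of the form $\tfrac{1}{(N-d)dq^2}\sum_{klx}\chi_{il}^{kx}(\A)G_{kj}G_{lj}X$ (the paper's $T_{1,2,1}$). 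Here the index $l$ is tied both to $\A_{il}$ and to $G_{lj}$, so you cannot simultaneously use the degree constraint and the Ward identity. The paper resolves this by a \emph{second} application of Lemma~\ref{lem2.2}, this time to the factor $\A_{il}$, to decouple $l$ before summing. This step is missing from your outline.
\end{itemize}

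A smaller correction: your claim that the $1/d$ inside $\widetilde{\cal E}$ forces the use of the squared estimate \eqref{3.5} is not right. The proof of Proposition~\ref{prop4.2} uses only the $\ell^1$ bound \eqref{3.4} (with $r=3$); the $1/d$ contribution comes from the second-order Taylor term $(\partial_{ik}^{lx})^2 G_{kj}$ in \eqref{4.3}, which is where the factor $(1+\phi)^3$ also originates, as you correctly note. The estimate \eqref{3.5} is used later, in Section~\ref{sec5}.
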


Suppose that
\begin{equation} \label{4.9}
	\max_{ij}\big|\delta_{ij}+zG_{ij}+\ul{G}G_{ij} \big|\prec \Phi
\end{equation}
for some deterministic $\Phi\in [\widetilde{\cal E},N^2]$. Fix $n\in \bb N_+$. Fix indices $i,j$ and denote $P\equiv P_{ij}\deq \delta_{ij}+zG_{ij}+\ul{G}G_{ij}$. Proposition \ref{prop4.2} is an easy consequence of 
\begin{equation} \label{moments}
	\bb E |P|^{2n}\prec \Phi^n\widetilde{\cal E}^{n}\,.
\end{equation}
More precisely, since $n$ is an arbitrary fixed integer, we obtain from Markov's inequality that $P\prec (\Phi \widetilde{\cal E})^{1/2}$. Taking a union bound over indices $i,j$, we get 
\[
\max_{ij}\big|\delta_{ij}+zG_{ij}+\ul{G}G_{ij} \big|\prec  (\Phi \widetilde{\cal E})^{1/2}\,,
\]
provided that \eqref{4.9} holds. Iterating the above, we get Proposition \ref{prop4.2} as desired.

Let us look into the proof of \eqref{moments}. By \eqref{2.1}, we get $P=q^{-1}(\A G)_{ij}+\ul{G}G_{ij}+N^{-1}$, and thus
\begin{equation} \label{4.4}
	\begin{aligned}
		\bb E |P|^{2n}
		&=\frac{1}{q}\sum_{k}\bb E\A_{ik}G_{kj}P^{n-1}\overline{P}^n+\bb E \ul{G}G_{ij}P^{n-1}\overline{P}^{n}+O(N^{-1})\cdot \bb E |P|^{2n-1}\\
		&\eqd \mbox{(I)+(II)}+O(N^{-1})\cdot \bb E |P|^{2n-1}\,.
	\end{aligned}
\end{equation}
We denote $\cal P\deq (\bb E|P|^{2n})^{\frac{1}{2n}}$ and $\cal E\deq (\Phi \widetilde{\cal E})^{1/2}$. It suffices to show that
\begin{equation} \label{4.5}
	\mbox{(I)+(II)}\prec \sum_{r=1}^{2n} \cal E^{r}\cal P^{2n-r}\,.
\end{equation}
To simplify notation, we shall drop the complex conjugates in (I)+(II) (which play no role in the subsequent analysis), and prove
\begin{equation} \label{4.6}
	\mbox{(I)'+(II)'}\deq \frac{1}{q}\sum_{k}\bb E\A_{ik}G_{kj}P^{2n-1}+\bb E \ul{G}G_{ij}P^{2n-1}\prec \sum_{r=1}^{2n} \cal E^{r}\cal P^{2n-r}
\end{equation}
instead of \eqref{4.5}. By triangle inequality and the fact that $|m(z)|=O(1)$, we have
\begin{equation} \label{4.10}
	\max_{ij}|G_{ij}| \prec 1+\phi\,.
\end{equation}
By Lemma \ref{lem2.2}, we have
\begin{equation} \label{4.13}
	\begin{aligned}
		\mbox{(I)'}=&\,\frac{1}{(N-d)dq}\sum_{klx:k\ne i}\bb E \chi_{il}^{kx}(\cal A)\D_{ik}^{lx}(G_{kj}P^{2n-1})	+\frac{d}{(N-d)q}\sum_{k:k\ne i}\bb E G_{kj}P^{2n-1}\\
		&-\frac{1}{(N-d)dq}\sum_{k:k\ne i}\bb E (\A^3)_{ik}G_{kj}P^{2n-1}+O(N^{-1}q^{-1})\cdot\sum_{k:k \ne i}\bb E\cal M_{ik}(G_{kj}P^{2n-1})\\
		=&\,\,\frac{1}{(N-d)dq}\sum_{klx}\bb E \chi_{il}^{kx}(\cal A)\D_{ik}^{lx}(G_{kj}P^{2n-1})	+\frac{d}{(N-d)q}\sum_{k}\bb E G_{kj}P^{2n-1}\\
		&-\frac{1}{(N-d)dq}\sum_{k}\bb E (\A^3)_{ik}G_{kj}P^{2n-1}+O(N^{-1}q^{-1})\cdot\sum_{k:k \ne i}\bb E\cal M_{ik}(G_{kj}P^{2n-1})\\
		&\,-\frac{1}{(N-d)dq}\sum_{lx}\bb E \chi_{il}^{ix}(\cal A)\D_{ii}^{lx}(G_{ij}P^{2n-1})	-\frac{d}{(N-d)q}\bb E G_{ij}P^{2n-1}\\
		&+\frac{1}{(N-d)dq}\bb E (\A^3)_{ii}G_{kj}P^{2n-1}
		\eqd T_1+\cdots+T_7\,.
	\end{aligned}
\end{equation}
By the last relation of \eqref{2.1}, it is easy to see that $T_2=0$. Applying Lemma \ref{lemAG} for $r=3$, we have $(\A^3G)_{ij}\prec (1+\phi)\,d^{3/2}$. Thus
\begin{equation*}
	T_3=O({N^{-1}d^{-3/2}})\cdot \bb E |(\cal A^3G)_{ij}P^{2n-1}| \prec (1+\phi)N^{-1} \bb E |P^{2n-1}|\leq (1+\phi)N^{-1}\cal P^{2n-1}\,.
\end{equation*}
Let us estimate the remainder term $T_4$. By \eqref{3.2}, \eqref{4.3}, \eqref{diff} and \eqref{4.10}, we see that
\[
\cal M_{ik}(G_{kj}P^{2n-1}) \prec |G_{kj}P^{2n-1}|+\max_{xy}|\D_{ik}^{xy}G_{kj}P^{2n-1}| \prec (1+\phi) \sum_{r=1}^{2n} ((1+\phi)^3q^{-1})^{r-1}P^{2n-r}\,,
\]
which implies
\begin{equation} \label{remainder}
	T_4 \prec N^{-1}q^{-1}\cdot N \cdot(1+\phi)\sum_{r=1}^{2n} ((1+\phi)^3q^{-1})^{r-1}\bb EP^{2n-r} \prec \sum_{r=1}^{2n} \cal E^{r}\cal P^{2n-r}\,.
\end{equation}
In the squeal, the remainder term from Lemma \ref{lem2.2} will always be small enough for our purposes, and we shall omit their estimates. Moreover, 
\[
T_5 \prec N^{-1}d^{-3/2}(1+\phi)\bb E\sum_{il} \cal A_{il}\cal A_{ix} \sum_{r=1}^{2n} ((1+\phi)^3q^{-1})^{r-1}P^{2n-r}\prec \sum_{r=1}^{2n} \cal E^{r}\cal P^{2n-r}
\] 
and
\[
T_6,\,T_7 \prec \cal E \cal P^{2n-1}\,.
\]
As a result, \eqref{4.13} simplifies to
\begin{equation} \label{4.166}
	\mbox{(I)'}=T_1+\sum_{r=1}^{2n} O_{\prec}(\cal E^{r})\cdot\cal P^{2n-r}\,.
\end{equation}
To examine the terms in $T_1$, we split according to \eqref{3.2}
\begin{equation} \label{4.188}
	\begin{aligned}
	T_1=&\,\frac{1}{(N-d)dq}\sum_{klx}\bb E \chi_{il}^{kx}(\cal A)\D_{ik}^{lx}(G_{kj})P^{2n-1}+\frac{1}{(N-d)dq}\sum_{klx}\bb E \chi_{il}^{kx}(\cal A)G_{kj}\D_{ik}^{lx}(P^{2n-1})\\
	&+\frac{1}{(N-d)dq}\sum_{klx}\bb E \chi_{il}^{kx}(\cal A)\D_{ik}^{lx}(G_{kj})\D_{ik}^{lx}(P^{2n-1})\eqd T_{1,1}+T_{1,2}+T_{1,3}\,.
	\end{aligned}
\end{equation}

\subsection{Computation of $T_{1,1}$} \label{section 4.1.1} Applying \eqref{4.3} with $\ell=1$, we get
	\begin{align}
	T_{1,1}&=\frac{1}{(N-d)dq^2}\sum_{klx}\bb E \chi_{il}^{kx}(\cal A)\partial_{ik}^{lx}(G_{kj})P^{2n-1}+\frac{1}{2(N-d)dq^3}\sum_{klx}\bb E \chi_{il}^{kx}(\cal A)((\partial_{ik}^{lx})^2G_{kj}(\cal A+\theta \xi_{ik}^{lx}))P^{2n-1}\nonumber\\
	&\eqd T_{1,1,1}+T_{1,1,2} \label{4.16}
	\end{align}
for some $\theta\in [0,1]$. By \eqref{diff}, we get
\begin{multline} \label{4.17}
	T_{1,1,1}=\frac{1}{(N-d)dq^2}\sum_{klx}\bb E \chi_{il}^{kx}(\cal A)P^{2n-1}\big(-G_{kk}G_{ij}-G_{ki}G_{kj}-G_{kl}G_{xj}-G_{kx}G_{lj}\\
	+G_{ki}G_{xj}+G_{kx}G_{ij}+G_{kl}G_{kj}+G_{kk}G_{lj}\big)\eqd \sum_{s=1}^{8}T_{1,1,1,s}\,.
\end{multline}
The term $T_{1,1,1,1}$ is the leading term in our computation. Recall the definition of $\chi$ in \eqref{2.4}. We have 
\begin{equation} \label{4.18}
	\begin{aligned}
	T_{1,1,1,1}=&-\frac{1}{(N-d)dq^2}\sum_k\bb E\big(\A_{ik}(\A^3)_{ik}+d^2-d^2\A_{ik}-(\A^3)_{ik}\big)G_{kk}G_{ij}P^{2n-1}\\
	&=-\frac{1}{(N-d)dq^2}\sum_k\bb E\big(\A_{ik}d^3N^{-1}+d^2-d^2\A_{ik}-d^3N^{-1}\big)G_{kk}G_{ij}P^{2n-1}\\
	&\quad +O_\prec\big((1+\phi)^2 N^{-1/2} \big)\bb E |P|^{2n-1}\\
	&=\frac{d}{Nq^2}\sum_{k}\bb E \A_{ik}G_{kk}G_{ij}P^{2n-1}-\frac{d}{q^2}\bb E \ul{G}G_{ij}P^{2n-1}+O_\prec(\cal E)\bb E |P|^{2n-1}\,.
	\end{aligned}
\end{equation}
Here in the second step we used \eqref{2}, which implies
\begin{equation*}
\begin{aligned}
	&\,\frac{1}{(N-d)dq^2}\sum_{k}\bb E \A_{ik} ((\A^3)_{ik}-d^3N^{-1})G_{kk}G_{ij} P^{2n-1} \\
	\prec&\ \frac{(1+\phi)^2 }{Nd^2} \cdot (d+d^2N^{-1/2}) \cdot N\bb E  |P|^{2n-1} \prec \cal E\,\bb E |P|^{2n-1}
\end{aligned}
\end{equation*}
and similarly 
\[
\frac{1}{(N-d)dq^2}\sum_{k}\bb E  ((\A^3)_{ik}-d^3N^{-1})G_{kk}G_{ij} P^{2n-1} \prec \cal E\,\bb E |P|^{2n-1}\,.
\]
For the first term on RHS of \eqref{4.18}, we again apply Lemma \ref{lem2.2}, this time with $F(\A)=G_{kk}G_{ij}P^{2n-1}$, and get
\begin{multline} \label{4.19}
	\frac{d}{Nq^2}\sum_{k}\bb E \A_{ik}G_{kk}G_{ij}P^{2n-1}=\,\frac{1}{(N-d)Nq^2}\sum_{kxy}\bb E \chi_{ix}^{ky}(\A)\D_{ik}^{xy}(G_{kk}G_{ij}P^{2n-1})\\	+\frac{d^2}{(N-d)q^2}\bb E \ul{G}G_{ij}P^{2n-1}
	-\frac{1}{(N-d)Nq^2}\sum_k\bb E (\A^3)_{ik}G_{kk}G_{ij}P^{2n-1}\\+O\bigg(\frac{d}{N^2q^2}\bigg)\cdot\sum_k\bb E\cal M_{ik}(G_{kk}G_{ij}P^{2n-1})
+\sum_{r=1}^{2n} O_{\prec}(\cal E^{r}) \cal P^{2n-r}
\end{multline}
By \eqref{diff} and \eqref{4.10} one can check that
\begin{equation*}
	\begin{aligned}
\D_{ik}^{xy}(G_{kk}G_{ij}P^{2n-1}) \prec&\,\frac{(1+\phi)^{3}}{q}\sum_{r=1}^{2n}\frac{(1+\phi)^{3r-1}}{q^{r-1}}P^{2n-r}
\\ &+\sum_{r=2}^{2n}\frac{(1+\phi)^{3r-1}}{q^{r-1}}P^{2n-r}\big(|G_{ik}|+|G_{ix}|+|G_{iy}|+|G_{kj}|+|G_{xj}|+|G_{yj}|\big)\,.
	\end{aligned}
\end{equation*}
By \eqref{ward}, we have
\[
\sum_{kxy}\chi_{ix}^{ky}(\A)\big(|G_{ik}|+|G_{ix}|+|G_{iy}|+|G_{kj}|+|G_{xj}|+|G_{yj}|\big) \prec dN^2(1+\varphi) \sqrt{\frac{\phi+\im m}{N\eta}}\,,
\]
and together with $\sum_{kxy}\chi_{ix}^{ky}(\A)\leq d^2N$ and $q\asymp \sqrt{d}$, we get
\begin{equation} \label{4.20}
	\frac{1}{(N-d)Nq^2}\sum_{kxy}\bb E \chi_{ix}^{ky}(\A)\D_{ik}^{xy}(G_{kk}G_{ij}P^{2n-1})\\
	 \prec \sum_{r=1}^{2n} \cal E^{r} \cal P^{2n-r}\,.
\end{equation}
Applying \eqref{2}, we get
\begin{equation} \label{4.21}
	-\frac{1}{(N-d)Nq^2}\sum_k\bb E (\A^3)_{ik}G_{kk}G_{ij}P^{2n-1}=-\frac{d^3}{(N-d)Nq^2}\bb E \ul{G}G_{ij}P^{2n-1}+O_\prec(\cal E)\cdot \cal P^{2n-1}\,.
\end{equation}
In addition, similar to \eqref{remainder}, it can be shown that
\[
O\bigg(\frac{d}{N^2q^2}\bigg)\cdot\sum_k\bb E\cal M_{ij}(G_{kk}G_{ij}P^{2n-1}) \prec \sum_{r=1}^{2n} \cal E^{r} \cal P^{2n-r}\,.
\]
Combing the above and \eqref{4.19} -- \eqref{4.21}, we get
\begin{equation} \label{4.22}
	\frac{d}{Nq^2}\sum_{k}\bb E \A_{ik}G_{kk}G_{ij}P^{2n-1}=\frac{d^2}{Nq^2}\bb E \ul{G}G_{ij}P^{2n-1}+\sum_{r=1}^{2n} O_\prec(\cal E^{r}\cal P^{2n-r})\,.
\end{equation}
Hence \eqref{3.1}, \eqref{4.18} and \eqref{4.22} implies
\begin{equation} \label{4.23}
	T_{1,1,1,1}=-\bb E \ul{G}G_{ij}P^{2n-1}+\sum_{r=1}^{2n} O_\prec(\cal E^{r} \cal P^{2n-r})\,.
\end{equation}

Other terms on RHS of \eqref{4.17} are error terms, and let us estimate them one by one. By $\chi_{il}^{kx}(\A) \leq \A_{il}\A_{kx}$, we have
\[
T_{1,1,1,2} \prec N^{-1}d^{-2}\sum_{klx} \bb E \A_{il}\A_{kx} |P|^{2n-1} |G_{ki}G_{kj}| \prec N^{-1}\sum_{k} \bb E  |P|^{2n-1} |G_{ki}G_{kj}|  \prec \cal E \cal P^{2n-1}\,,
\]
where in the last step we used \eqref{ward} and Jensen's inequality. In addition,
\[
|T_{1,1,1,3}| \prec N^{-1}d^{-2}\sum_{klx} \bb E \A_{il}\A_{kx} |P|^{2n-1} |(1+\phi)G_{xj}| \prec N^{-1}\sum_{x} \bb E  |P|^{2n-1} |(1+\phi)G_{xj}|   \prec \cal E \cal P^{2n-1}\,.
\]
Similarly, we can show that $|T_{1,1,1,5}|+|T_{1,1,1,7}|\prec \cal E \cal P^{2n-1}$. Next, we have
\begin{equation*}
	\begin{aligned}
	T_{1,1,1,4}&=-\frac{1}{(N-d)dq^2}\sum_{klx}\bb E(\A_{il}\A_{kx}+\A_{il}\A_{ik}\A_{kx}\A_{lx}-\A_{il}\A_{kx}\A_{lx}-\A_{il}\A_{ik}\A_{kx})G_{kx}G_{lj}P^{2n-1}\\
	&=-\frac{1}{(N-d)dq^2}\bb E \tr (\A G) (\A G)_{ij} P^{2n-1}+O(N^{-1}d^{-2})\sum_{klx} \bb E (\A_{kx}\A_{lx}+\A_{ik}\A_{kx})(1+\phi)|G_{lj}| |P|^{2n-1}\\
	&\prec d^{-1}\cal P^{2n-1}+N^{-1}\sum_{l}(1+\phi)|G_{lj}| |P|^{2n-1}\prec \cal E\cal P^{2n-1}\,,
	\end{aligned}
\end{equation*}
where in the third step we used Lemma \ref{lemAG}. Similarly, we can show that $|T_{1,1,1,6}|+|T_{1,1,1,8}|\prec \cal E \cal P^{2n-1}$. As a result,
\begin{equation} \label{4.25}
	\sum_{s=2}^{8}T_{1,1,1,s} \prec \cal E \cal P^{2n-1}\,.
\end{equation}
By resolvent identity and \eqref{4.10}, it is easy to see that $((\partial_{ik}^{lx})^2G_{kj}(\cal A+\theta \xi_{ik}^{lx})) \prec (1+\phi)^3$, and thus
\[
T_{1,1,2}\prec \frac{(1+\phi)^3}{(N-d)dq^3}\sum_{klx}\bb E | \chi_{il}^{kx}(\cal A)P^{2n-1}|\leq \frac{(1+\phi)^3}{Nd^{5/2}}\sum_{klx}\bb E | \A_{il}\A_{kx}P^{2n-1}|\prec \frac{(1+\phi)^2}{d^{1/2}} \cal P^{2n-1}\,,
\]
where in the second step we used $q\asymp \sqrt{d}$. Combining the above with \eqref{4.16}, \eqref{4.17}, \eqref{4.23} and \eqref{4.25}, we finish the computation of $T_{1,1}$ by getting
\begin{equation} \label{4.27}
	T_{1,1}=-\bb E \ul{G}G_{ij} P^{2n-1}+\sum_{r=1}^{2n} O_\prec (\cal E^{r}\cal P^{2n-r})\,.
\end{equation}

\subsection{Estimate of $T_{1,2}$} \label{sec4.1.2}
\textit{Case 1.} Let us first illustrate the steps on the dense regime $d \asymp N$. In this case, $q\asymp\sqrt{d}\asymp\sqrt{N}$. Trivially, we have $\D_{ik}^{lx}P \prec \cal E$.  By \eqref{ward}, \eqref{diff} and \eqref{4.10}, we have
\[
q^{-1}\partial_{ik}^{lx} P \prec q^{-1}(1+\phi)^2(|G_{kj}|+|G_{xj}|+|G_{lj}|+|G_{ik}|+|G_{il}|)+q^{-1}\cal E \ \ \mbox{and}\ \ q^{-s}(\partial_{ik}^{lx})^s P \prec q^{-1}\cal E
\] 
for $s \geq 2$. Together with \eqref{3.2} and \eqref{4.3}, it is not hard to see that
\[
\D_{ik}^{lx}(P^{2n-1}) \prec q^{-1}(1+\phi)^2(|G_{kj}|+|G_{xj}|+|G_{lj}|+|G_{ik}|+|G_{il}|)\sum_{r=2}^{2n} \cal E^{r-2}|P|^{2n-r}+q^{-1}\sum_{r=2}^{2n} \cal E^{r-1}|P|^{2n-r}\,.
\]
Together with the trivial bound $\chi_{il}^{kx}(\A)\leq 1$ and \eqref{ward}, we conclude that
\begin{equation*}
	\begin{aligned}
		T_{1,2}&\prec 	N^{-5/2}\sum_{klx}\bb E |G_{kj}\D_{ik}^{lx}(P^{2n-1})|\\
		&\prec N^{-3} (1+\phi)^2 \sum_{klx}\bb E\bigg( |G_{kj}|(|G_{kj}|+|G_{xj}|+|G_{lj}|+|G_{ik}|+|G_{il}|)\sum_{r=2}^{2n} \cal E^{r-2}|P|^{2n-r}\bigg)
		+\sum_{r=2}^{2n} \cal E^{r}\cal P^{2n-r}\\
		&\prec \sum_{r=2}^{2n} \cal E^{r}\cal P^{2n-r}\,.
	\end{aligned}
\end{equation*}

\textit{Case 2.} Now let us examine the general case. The growing complexity is largely due to the fact that we are including the sparse regime $d \ll N$, and as a result we cannot estimate the entries of $\A$ by 1:\ they have to be used in the summations. By \eqref{2.1}, we can rewrite $P$ by $P=q^{-1}(\A G)_{ij}+\ul{G}G_{ij}+N^{-1}$. Using \eqref{3.2}, \eqref{ward}, \eqref{4.3}, \eqref{diff} and \eqref{4.10}, we get
\begin{equation*} 
\begin{aligned}
\D_{ik}^{lx}P=&\,q^{-1}\sum_a  (\D_{ik}^{lx}\A_{ia}) G_{aj}+q^{-1}\sum_a  (\D_{ik}^{lx}\A_{ia})( \D_{ik}^{lx}G_{aj})+q^{-1}\sum_a  \A_{ia}( \D_{ik}^{lx}G_{aj})\\
&+(\D_{ik}^{lx}\ul{G})G_{ij}+(\D_{ik}^{lx} \ul{G})(\D_{ik}^{lx} G_{ij})+ \ul{G}(\D_{ik}^{lx} G_{ij})\\
=&\, q^{-1}\sum_a  (\D_{ik}^{lx}\A_{ia}) G_{aj}+q^{-1}\sum_a  \A_{ia}( \D_{ik}^{lx}G_{aj})+ \ul{G}(\D_{ik}^{lx} G_{ij})+O_\prec((1+\phi)^3)\cdot\bigg( \frac{1}{q^2}+\frac{\im m +\phi}{N\eta q}\bigg)\\
=&\,q^{-1}\sum_a  (\D_{ik}^{lx}\A_{ia}) G_{aj}+q^{-2}\sum_a  \A_{ia}( \partial_{ik}^{lx}G_{aj})+ q^{-1}\ul{G}(\partial_{ik}^{lx} G_{ij})+O_{\prec}(\cal Eq^{-1})\,.
\end{aligned} 
\end{equation*}
Let us denote $P_*\deq \max_{ij}|\delta_{ij}+z\ul{G}_{ij}+\ul{G}G_{ij}|=\max_{ij}|q^{-1}(\A G)_{ij}+\ul{G}G_{ij}+N^{-1}|$. By \eqref{diff} and \eqref{4.10}, it is not hard to see that
\begin{equation*} 
	q^{-2}\sum_a  \A_{ia}( \partial_{ik}^{lx}G_{aj})+ q^{-1}\ul{G}(\partial_{ik}^{lx} G_{ij}) \prec (1+\phi)q^{-1} P_*+N^{-1}\prec (1+\phi)q^{-1} \Phi+N^{-1}\,,
\end{equation*}
where in the last step we also used our assumption \eqref{4.9}. The above shows that heuristically, $\D_{ik}^{lx}$ on $P$ generates some self-similar terms. Hence
\begin{equation} \label{4.29}
	\begin{aligned} 
	\D_{ik}^{lx}P=&\,q^{-1}\sum_a  (\D_{ik}^{lx}\A_{ia}) G_{aj}+O_{\prec}((1+\phi)\Phi q^{-1}+\cal Eq^{-1})\\
	=&\,q^{-1}(G_{kj}+\delta_{ik}G_{ij}+\delta_{il}G_{xj}+\delta_{ix}G_{lj}-G_{lj}-\delta_{il}G_{ij}-\delta_{ik}G_{xj}-\delta_{ix}G_{kj})\\
	&+O_{\prec}((1+\phi)\Phi q^{-1}+\cal Eq^{-1})\,.
	\end{aligned}
\end{equation}
Let us define $X\deq (2n-1)P^{2n-2}+(2n-2)P \D_{ik}^{jl}(P^{2n-3})+(2n-3)P^2 \D_{ik}^{jl}(P^{2n-4})+\cdots+ 2P^{2n-3} \D_{ik}^{jl}(P)$. Note that the trivial estimate $\D_{ik}^{lx}P \prec \cal E$ implies 
\begin{equation} \label{wer}
	X\prec \sum_{r=2}^{2n}\cal E^{r-2} |P|^{2n-r}\,.
\end{equation}
By \eqref{4.29} and \eqref{wer}, we get
\begin{align}
&\D_{ik}^{lx}(P^{2n-1})=(\D_{ik}^{lx}P) X \label{4.30}\\
=&\ q^{-1}(G_{kj}+\delta_{ik}G_{ij}+\delta_{il}G_{xj}+\delta_{ix}G_{lj}-G_{lj}-\delta_{il}G_{ij}-\delta_{ik}G_{xj}-\delta_{ix}G_{kj})X+\sum_{r=2}^{2n} O_{\prec}(\cal E^r)\cdot|P|^{2n-r}\nonumber \,.
\end{align}
By \eqref{3.1}, \eqref{ward} and \eqref{4.10}, it is easy to see that
\begin{equation} \label{4.31}
	\frac{1}{(N-d)dq^2}\sum_{klx}\bb E |\chi_{il}^{kx}(\cal A)G_{kj}q^{-1}(\delta_{ik}G_{ij}+\delta_{il}G_{xj}+\delta_{ix}G_{lj}-\delta_{il}G_{ij}-\delta_{ik}G_{xj}-\delta_{ix}G_{kj})| \prec \cal E^2\,.
\end{equation}
Inserting \eqref{wer}--\eqref{4.31} into \eqref{4.188}, we get
\begin{equation} \label{4.32}
\begin{aligned}
	T_{1,2}=&\ \frac{1}{(N-d)dq}\sum_{klx} \bb E \chi_{il}^{kx}(\A)G_{kj} (\D_{ik}^{lx}P) X\\
	=&\ \frac{1}{(N-d)dq^2}\sum_{klx} \bb E \chi_{il}^{kx}(\A)G_{kj}(G_{kj}-G_{lj})X+\sum_{r=2}^{2n}O_\prec (\cal E^{r})\cdot\cal P^{2n-r}\,.\\
\end{aligned}
\end{equation}
By \eqref{ward} and \eqref{wer}, we have
\begin{equation*}
\begin{aligned}
\frac{1}{(N-d)dq^2}\sum_{klx} \bb E \chi_{il}^{kx}(\A)G^2_{kj}X
\prec \frac{1}{Nd^{2}}\sum_{klx} \bb E\bigg( |\A_{il}\A_{kx}G^2_{kj}| \cdot \sum_{r=2}^{2n} \cal E^{r-2} |P|^{2n-r} \bigg)\prec \sum_{r=2}^{2n} \cal E^{r} \cal P^{2n-r} \,,
\end{aligned}
\end{equation*}
and combing the above with \eqref{4.32} yields
\begin{equation} \label{4.33}
	\begin{aligned}
	T_{1,2}&=-\frac{1}{(N-d)dq^2}\sum_{klx} \bb E \chi_{il}^{kx}(\A)G_{kj}G_{lj}X+\sum_{r=2}^{2n}O_\prec (\cal E^{r})\cdot\cal P^{2n-r}\\
&	\eqd T_{1,2,1}+ \sum_{r=2}^{2n}O_\prec (\cal E^{r})\cdot\cal P^{2n-r}\,.
\end{aligned}
\end{equation}
If we look at the term $T_{1,2,1}$, it contains the factor $\A_{il}G_{lj}$ so we cannot use the smallness of $\sum_l \A_{il}$ and Ward identity at the same time. We (unfortunately) have to apply Lemma \ref{lem2.2} again. Let us abbreviate $F(\A)=(1-\A_{ik})\A_{kx}(1-\A_{lx})G_{kj}G_{lj}X$. Lemma \ref{lem2.2} implies
\begin{align}\label{222}
	T_{1,2,1}=&\,-\frac{1}{(N-d)^2d^2q^2}\sum_{klxyz}\bb E \chi_{iy}^{lz}(\cal A)\D_{il}^{yz}	F(\A)-\frac{1}{(N-d)^2q^2}\sum_{klx}\bb E F(\A)\nonumber\\
	&+\frac{1}{(N-d)^2d^2q^2}\sum_{klx}\bb E (\A^3)_{il}F(\A)+O(N^{-2}d^{-2})\cdot\sum_{klx}\bb E\cal M_{il}(F(\A))+\sum_{r=1}^{2n}O_\prec (\cal E^{r})\cdot\cal P^{2n-r}\nonumber\\
	\eqd&\, T_{1,2,1,1}+\cdots+T_{1,2,1,4}+\sum_{r=1}^{2n}O_\prec (\cal E^{r})\cdot\cal P^{2n-r}\,.
\end{align}
By \eqref{ward} and \eqref{wer} we have
\begin{equation} \label{4.34}
T_{1,2,1,2}\prec N^{-2}d^{-1}\cdot \sum_{klx}\bb E\bigg( |\A_{kx}G_{kj}G_{lj}|\cdot \sum_{r=2}^{2n} \cal E^{r-2} |P|^{2n-r} \bigg)\prec \sum_{r=2}^{2n}\cal E^r \cal P^{2n-r}\,.
\end{equation}
Note that the above estimate works because of the absence of $\A_{il}$. Similarly, we can use \eqref{4.3} and resolvent identity to show that
\begin{equation} \label{4.35}
	T_{1,2,1,1}+T_{1,2,1,4} \prec \sum_{r=2}^{2n}\cal E^r \cal P^{2n-r}\,.
\end{equation}
With the help of \eqref{2}, \eqref{ward} and \eqref{4.10}, we have
\begin{equation} \label{4.37}
	\begin{aligned}
		T_{1,2,1,3}&\prec N^{-2}d^{-3}\sum_{klx} \bb E\bigg( |(\A^3)_{il}\A_{kx}G_{kj}G_{lj}| \cdot \sum_{r=2}^{2n} \cal E^{r-2} |P|^{2n-r} \bigg)\\
		&\prec N^{-1}d^{-2}\sum_{l} \bb E\bigg( |(\A^3)_{il}G_{lj}|\cdot \sum_{r=2}^{2n}\cal E^{r-1}\cal P^{2n-r}\bigg) \\	
		&\prec(1+\phi) N^{-1}d^{-2}\sum_{l} \bb E\bigg( |(\A^3)_{il}-N^{-1}d^3|\cdot \sum_{r=2}^{2n}\cal E^{r-1}\cal P^{2n-r}\bigg) \\
		&\ + N^{-2}d \sum_{l} \bb E\bigg( |G_{lj}|\cdot \sum_{r=2}\cal E^{r-1}\cal P^{2n-r}\bigg)\prec \sum_{r=2}^{2n}\cal E^r \cal P^{2n-r}\,.
	\end{aligned}
\end{equation}
Combining \eqref{4.33} -- \eqref{4.37} we get 
\begin{equation} \label{4.38}
	T_{1,2}\prec \sum_{r=2}\cal E^r \cal P^{2n-r}\,.
\end{equation}

\subsection{Estimate of $T_{1,3}$}
The estimates of $T_{1,3}$ are very similar to those of $T_{1,2}$. In $T_{1,3}$, the factor $G_{kj}$ is replaced by $\D_{ik}^{lx}G_{kj}$, which means we cannot use the Ward identity over summation index $k$. However, we are compensate by the fact that $\D_{ik}^{lx}G_{kj}$ generates at least one factor of $q^{-1}$, which is equivalently good in our estimates. Thus by steps that are very similar to how we estimated $T_{1,2}$, it can be shown that
\begin{equation} \label{4.48}
	T_{1,3} \prec \sum_{r=2}^{2n} \cal E^{r}\cal P^{2n-r}\,.
\end{equation}
Combining \eqref{4.166}, \eqref{4.188}, \eqref{4.27}, \eqref{4.38} and  \eqref{4.48} yields
\[
\mbox{(I)'}=-\bb E \ul{G}G_{ij}P^{2n-1}+\sum_{r=1}^{2n} O_{\prec}(\cal E^{r})\cdot\cal P^{2n-r}\,,
\]
and thus we have \eqref{4.6} as desired. This finishes the proof of Proposition \ref{prop4.2}.

\section{Strong self-consistent equation near the edge} \label{sec5}
To get a more precise description of the spectrum, let us define the shifted Stieltjes transform 
\[
\widehat{m}(z)\deq m\Big(z+\frac{d}{Nq}\Big)\,.
\]
We have
\begin{equation} \label{mhat}
	\widehat{m}^2+\Big(z+\frac{d}{Nq}\Big)\widehat{m}+1=0\,, \quad \mbox{and} \quad \widehat{m}(z)-m(z)=O(N^{-1/4})
\end{equation}
uniformly for $z \in \b D$. Let us write $z=E+\ii \eta$ and $\kappa \deq |(E+\frac{d}{Nq})^2-4|$. It is easy to see that
\begin{equation*} 
	\im \widehat{m}(z) \asymp
	\begin{cases}
		\sqrt{\kappa+\eta} \quad &\mbox{if} \quad |E|\leq 2\\
		\frac{\eta}{\sqrt{\kappa+\eta}} \quad &\mbox{if} \quad |E|>2.
	\end{cases}
\end{equation*}
Having the weak local law at hand, we can relate the entrywise law to the average law in the following sense. A standard consequence of Theorem \ref{theorem 4.1} is the eigenvector delocalization Corollary \ref{cor4.2}, which together with \eqref{ward} implies
\begin{equation} \label{ward2}	
	\sum_{i}|G_{ij}|^2 =\frac{\im G_{jj}}{\eta}\prec \frac{\im \ul{G}}{\eta} \prec \frac{|\ul{G}-\widehat{m}|+\im \widehat{m}}{\eta}\,.
\end{equation}
Comparing \eqref{ward2} with \eqref{ward}, we see that the  improved Ward identity \eqref{ward2} contains the term $|\ul{G}-\widehat{m}|$ instead of $|G_{ii}-m|$, and $|\ul{G}-\widehat{m}|$ is expected to fluctuate on a smaller scale. In addition, by Theorem \ref{theorem 4.1}, triangle inequality and the fact that $m(z)=O(1)$, we have
\begin{equation}  \label{4.42}
	\max_{ij}|G_{ij}| \prec 1
\end{equation}
for all $z \in \b D$. Using \eqref{ward2} and \eqref{4.42}, instead of \eqref{ward} and \eqref{4.10}, we can redo the proof of Proposition \ref{prop4.2} and show that
\[
\max_{i,j}|\delta_{ij}+zG_{ij}+\ul{G}G_{ij}|\prec  \sqrt{\frac{|\ul{G}-\widehat{m}|+\im \widehat{m}}{N\eta}+\frac{1}{d}}\,,
\]
and thus
\[
\max_{i,j}\Big|\delta_{ij}+\Big(z+\frac{d}{Nq}\Big)G_{ij}+\ul{G}G_{ij}\Big|\prec  \sqrt{\frac{|\ul{G}-\widehat{m}|+\im \widehat{m}}{N\eta}+\frac{1}{d}}\,.
\]
The above and \eqref{mhat} imply
\begin{equation} \label{yahaha}
\max_{ij}|G_{ij}-\widehat{m}\delta_{ij}| \prec |\ul{G}-\widehat{m}|+\sqrt{\frac{|\ul{G}-\widehat{m}|+\im \widehat{m}}{N\eta}+\frac{1}{d}}\,.
\end{equation}
In this section we shall prove the following result.

\begin{proposition} \label{prop4.5}
Assume $N^{\tau} \leq d \leq N/2$ for some fixed $\tau>0$. Fix $\delta\in (0,\tau/10)$. Let $z\in \b D$, and suppose that $|\ul{G}-\widehat{m}|\prec \psi$ and $\max_{ij}|G_{ij}-\delta_{ij}\widehat{m}|\prec \psi+\sqrt{\cal E_1}$ for some deterministic $\psi \in [N^{-1},1]$ at $z$, where
\[
\cal E_1\deq \cal E_2+\frac{1}{d}\,, \quad \mbox{and} \quad \cal E_2 \deq \frac{\psi+\im \widehat{m}}{N\eta}\,.
\] 
Then at $z$ we have
	\[
	1+(z+d/(Nq))\ul{G}+\ul{G}^2\prec \cal E_1+\cal E_1^{1/4}\cal E_2^{1/2} (\psi+|z+d/(Nq)+2\widehat{m}|)^{1/2}+d^{-1/2}\psi\eqd 
	\widehat{\cal E}\,.
	\]
\end{proposition}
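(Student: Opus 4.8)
My plan is to transfer the entrywise analysis behind Proposition \ref{prop4.2} to the normalized trace, exploiting that averaging over the diagonal roughly squares the error. Write $P_{ij}\deq\delta_{ij}+zG_{ij}+\ul{G}G_{ij}$, so that the quantity to bound is $\ul{P}\deq N^{-1}\sum_i P_{ii}=1+z\ul{G}+\ul{G}^2$. As in Section \ref{sec4} it will suffice, for each fixed $n\in\bb N_+$, to prove a high-moment bound $\bb E|\ul{P}|^{2n}\prec\sum_{r=1}^{2n}\widehat{\cal E}^{r}\cal P^{2n-r}$ with $\cal P\deq(\bb E|\ul{P}|^{2n})^{1/(2n)}$, from which Young's inequality and Markov give $\ul{P}\prec\widehat{\cal E}$. (The hypotheses and \eqref{yahaha} already furnish the a priori bound $\ul{P}\prec\sqrt{\cal E_1}$; since $\widehat{\cal E}\leq\sqrt{\cal E_1}$, a short self-improving iteration on the a priori size of $\ul{P}$, as after \eqref{moments}, converges.) Using \eqref{2.1} to write $P_{ii}=q^{-1}(\A G)_{ii}+\ul{G}G_{ii}+N^{-1}$ and discarding complex conjugates as in Section \ref{sec4}, I would start from
\[
\bb E\ul{P}^{2n}=\frac{1}{Nq}\sum_{ik}\bb E\,\A_{ik}G_{ki}\ul{P}^{2n-1}+\bb E\,\ul{G}^2\ul{P}^{2n-1}+O(N^{-1})\cdot\cal P^{2n-1}\eqd(\mathrm I)+(\mathrm{II})+O(N^{-1})\cal P^{2n-1}\,.
\]

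Next I would apply Lemma \ref{lem2.2} to $(\mathrm I)$, summing over $i,k$ with $F=G_{ki}\ul{P}^{2n-1}$. Three simplifications occur: the ``main'' term $\frac{d}{N-d}\sum_k\bb E\,G_{ki}\ul{P}^{2n-1}$ vanishes since $\sum_k G_{ki}=0$ by \eqref{2.1}; the $(\A^3)$-term equals $-\frac{1}{Nq(N-d)d}\bb E\,\tr(\A^3G)\ul{P}^{2n-1}$ and, since $\sum_{ij}G_{ij}=0$, $\tr(\A^3G)=\sum_{ij}\big((\A^3)_{ij}-d^3N^{-1}\big)G_{ji}=O_\prec(d^2N^{3/2})$ by Proposition \ref{prop4.4} and $\max_{ij}|G_{ij}|\prec1$, so this term is $O_\prec(N^{-1})\cal P^{2n-1}\leq\widehat{\cal E}\,\cal P^{2n-1}$ (using $N^{-1}\leq d^{-1}\leq\cal E_1$); and the switching remainder is bounded exactly as $T_4$ in Section \ref{section 4.1.1}. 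The survivor $\frac{1}{Nq(N-d)d}\sum_{iklx}\bb E\,\chi_{il}^{kx}(\A)\D_{ik}^{lx}(G_{ki}\ul{P}^{2n-1})$ splits by \eqref{3.2} into $(\mathrm I)_1,(\mathrm I)_2,(\mathrm I)_3$ according as $\D_{ik}^{lx}$ hits $G_{ki}$, $\ul{P}^{2n-1}$, or both. In $(\mathrm I)_1$ I would Taylor-expand $\D_{ik}^{lx}G_{ki}$ via \eqref{4.3} ($\ell=1$) and \eqref{diff}; of the eight products only $-q^{-1}G_{kk}G_{ii}$ is leading. Since it is independent of $l,x$ and $\sum_{lx}\chi_{il}^{kx}(\A)=(1-\A_{ik})\big(d^2-(\A^3)_{ik}\big)$, its contribution is $-\frac{1}{Nq^2(N-d)d}\sum_{ik}\bb E\,(1-\A_{ik})\big(d^2-(\A^3)_{ik}\big)G_{ii}G_{kk}\ul{P}^{2n-1}$; using the exact identity $\sum_{ik}\A_{ik}(\A^3)_{ik}=\tr\A^4$, Proposition \ref{prop4.4}, the vanishing of the row and column sums of $(\A^3)_{ik}-d^3N^{-1}$, $G_{ii}=m+O_\prec(\psi+\sqrt{\cal E_1})$, and $q^2=d(N-d)/N$, this collapses to $-\bb E\,\ul{G}^2\ul{P}^{2n-1}+\sum_r O_\prec(\widehat{\cal E}^{r})\cal P^{2n-r}$, which cancels $(\mathrm{II})$.

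It then remains to show every error term is $\prec\sum_r\widehat{\cal E}^{r}\cal P^{2n-r}$. The remaining seven products in $\partial_{ik}^{lx}G_{ki}$, each involving at least one off-diagonal Green-function entry (e.g. $G_{ki}^2$, $G_{ki}G_{li}$, $G_{kl}G_{ii}$), I expect to be handled by the reorganizations used for $T_{1,1,1,s}$ and $T_{1,2}$ in Section \ref{sec4} — folding the $\chi$-sum into traces like $\tr(\A G)$, $\tr(\A^2G)$, then invoking $\sum_k G_{ki}=0$, $\sum_{kl}G_{kl}=0$, the identities \eqref{2.1}, Lemma \ref{lemAG}, Proposition \ref{prop4.4}, and the Ward identity \eqref{ward2} now applied over \emph{both} free indices, so that $\sum_{ik}|G_{ik}|^2=N\im\ul{G}/\eta\prec N\cal E_2$ supplies a \emph{full} power of $\cal E_2$ rather than a square root; the outcome is $\prec(\cal E_1+d^{-1/2}\psi)\cal P^{2n-1}\leq\widehat{\cal E}\,\cal P^{2n-1}$, the $d^{-1/2}\psi$ summand of $\widehat{\cal E}$ absorbing the pieces that carry one factor $q^{-1}\asymp d^{-1/2}$ together with an off-diagonal entry or $\ul{G}-m$ of size $O_\prec(\psi)$ (and $\sqrt{\cal E_1}\,d^{-1/2}\leq\cal E_1$). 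The genuinely new phenomenon lives in $(\mathrm I)_2$: here $\D_{ik}^{lx}\ul{P}=(z+2\ul{G})\D_{ik}^{lx}\ul{G}+(\D_{ik}^{lx}\ul{G})^2$ with $\D_{ik}^{lx}\ul{G}=\frac{2}{Nq}\big((G^2)_{il}+(G^2)_{kx}-(G^2)_{ik}-(G^2)_{lx}\big)+O(q^{-2}\cdots)$, and the coefficient $z+2\ul{G}=(z+2m)+O_\prec(\psi)$ — the derivative of $x\mapsto1+zx+x^2$ at $x=\ul{G}$ — is precisely the quantity that is small near the spectral edge. The aim is to show, by a suitable Cauchy--Schwarz on $\sum_{iklx}\chi_{il}^{kx}(\A)|G_{ki}||(G^2)_{\bullet\bullet}|$ using $\sum_{ik}|G_{ik}|^2=N\im\ul{G}/\eta$, $\max_{ab}|(G^2)_{ab}|\prec\im\ul{G}/\eta$, $|\tr G^3|\prec N\im\ul{G}/\eta^2$, the identity $\frac1{N\eta}=\cal E_2/(\im m+\psi)$, and $q^2=d(N-d)/N$, that $(\mathrm I)_2\prec(|z+2m|+\psi)\,\cal E_2^{3/2}\,\cal P^{2n-2}$; since $\cal E_2\leq\cal E_1$ this is $\leq\cal E_1^{1/2}\cal E_2(\psi+|z+2m|)\cal P^{2n-2}\leq\widehat{\cal E}^{2}\cal P^{2n-2}$, and this is exactly the term for which the middle summand $\cal E_1^{1/4}\cal E_2^{1/2}(\psi+|z+2m|)^{1/2}$ of $\widehat{\cal E}$ is tailored. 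The piece $(\mathrm I)_3$ combines the two gains and is $\prec\sum_{r\geq2}\widehat{\cal E}^{r}\cal P^{2n-r}$. Summing up, $(\mathrm I)+(\mathrm{II})=\sum_r O_\prec(\widehat{\cal E}^{r})\cal P^{2n-r}$, hence $\ul{P}\prec\widehat{\cal E}$, which is the assertion.

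The hardest step will be $(\mathrm I)_2$: one must correctly recognize $z+2\ul{G}$ as the coefficient of the dominant part of $\D_{ik}^{lx}\ul{P}$ and bound it by $|z+2m|+\psi$, and then route the fourfold sum $\sum_{iklx}\chi_{il}^{kx}(\A)G_{ki}(G^2)_{\bullet\bullet}$ through the right Cauchy--Schwarz so that the sparse factor $\A$, the Ward identity, and the bounds on $(G^2)$ and $\tr G^3$ combine to exactly $\cal E_2^{3/2}$ with the right power of $d$ — any slack here breaks the fit under $\widehat{\cal E}^{2}$. A second, pervasive difficulty is that the leading cancellation against $(\mathrm{II})$ must survive replacing $(\A^3)_{ik}$ by $d^3N^{-1}$: a crude triangle-inequality estimate of that replacement error is too lossy once $d\gg N^{2/3}$, so the exact identity $\sum_{ik}\A_{ik}(\A^3)_{ik}=\tr\A^4$, Proposition \ref{prop4.4}, and the vanishing of the row and column sums of $(\A^3)_{ik}-d^3N^{-1}$ must all be exploited.
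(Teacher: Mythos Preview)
Your overall architecture matches the paper's: a high-moment bound on $Q\deq 1+z\ul{G}+\ul{G}^2$ via Lemma~\ref{lem2.2}, the product-rule split into three pieces, and the identification of $z+2\ul G$ as the small coefficient in $\D_{ij}^{kl}Q$ that produces the middle summand of $\widehat{\cal E}$. Your treatment of $(\mathrm I)_2$ is correct and agrees with the paper's \eqref{4.49}--\eqref{4.50}.

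There is, however, a genuine gap in your extraction of the leading cancellation. After summing out $l,x$ in the $-q^{-1}G_{kk}G_{ii}$ contribution you arrive at
\[
-\frac{1}{Nq^2(N-d)d}\sum_{ik}\bb E\big(\A_{ik}(\A^3)_{ik}+d^2-d^2\A_{ik}-(\A^3)_{ik}\big)G_{ii}G_{kk}\,Q^{2n-1}\,.
\]
Writing $G_{ii}G_{kk}=\ul{G}^2+\ul{G}(G_{ii}-\ul{G})+\ul{G}(G_{kk}-\ul{G})+(G_{ii}-\ul{G})(G_{kk}-\ul{G})$, the $\ul{G}^2$ part indeed gives $-\ul{G}^2$ via $\tr\A^4$ and Proposition~\ref{prop4.4}, and the two cross terms vanish by $\sum_i(G_{ii}-\ul{G})=0$ after the $(\A^3)$ replacement, as you indicate. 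But the quadratic fluctuation $(G_{ii}-\ul{G})(G_{kk}-\ul{G})$, paired with the $-d^2\A_{ik}$ piece, leaves
\[
\frac{d}{N^2q^2}\sum_{ik}\bb E\,\A_{ik}(G_{ii}-\ul{G})(G_{kk}-\ul{G})\,Q^{2n-1}\,,
\]
and none of the tools you list controls it. The entrywise bound yields only $\frac{d}{N}(\psi+\sqrt{\cal E_1})^2$, which for $d\asymp N$ is of order $\psi^2$; taking for instance $\psi=d^{-1/4}$ gives $\psi^2\asymp d^{-1/2}$ while $\widehat{\cal E}\asymp d^{-3/4}$, so this is too large. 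The vanishing of row sums of $\A_{ik}-d/N$ does not help, since $\sum_k\A_{ik}(G_{kk}-\ul{G})$ carries no fluctuation bound from Proposition~\ref{prop4.4} (which only treats $\A^r$ for $r\geq 2$). The paper closes this by a \emph{second} application of Lemma~\ref{lem2.2} to $\A_{ik}$ with $F=(G_{ii}-\ul{G})(G_{kk}-\ul{G})Q^{2n-1}$; see \eqref{4.59}--\eqref{4.63}. There the $\frac{d}{N-d}\bb E F$ term vanishes since $\sum_{ik}F=0$, the $(\A^3)$ term is handled by Proposition~\ref{prop4.4}, and the switching term gains the crucial extra factor $q^{-1}\asymp d^{-1/2}$, after which everything fits under $\widehat{\cal E}$. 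This second integration by parts is exactly what makes the argument survive up to $d=N/2$, and it is absent from your plan.
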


Fix $n \geq 1$. Let us denote $Q \deq 1+(z+d/(Nq))\ul{G}+\ul{G}^2$. By \eqref{2.1}, $Q=q^{-1}\ul{\A G}+d/(Nq) \cdot\ul{G}+\ul{G}^2+N^{-1}$ we have
\begin{equation*} 
	\begin{aligned}
		\bb E |Q|^{2n}
		&=\frac{1}{Nq}\sum_{ij}\bb E\A_{ij}G_{ji}Q^{n-1}\overline{Q}^n+\bb E (d/(Nq)\cdot\ul{G}+ \ul{G}^2)Q^{n-1}\overline{Q}^{n}+O(N^{-1})\cdot \bb E |Q|^{2n-1}\\
		&\eqd \mbox{(III)+(IV)}+O(N^{-1})\cdot \bb E |Q|^{2n-1}\,.
	\end{aligned}
\end{equation*}
We denote $\cal Q\deq (\bb E|Q|^{2n})^{\frac{1}{2n}}$, it suffices to show that
\begin{equation}  \label{5.5}
	\mbox{(III)+(IV)}\prec \sum_{r=1}^{2n} \widehat{\cal E}^{r}\cal Q^{2n-r}\,.
\end{equation}
To simplify notation, we shall drop the complex conjugates in (III)+(IV) (which play no role in the subsequent analysis), and prove
\begin{equation} \label{4.43}
	\mbox{(III)'+(IV)'}\deq \frac{1}{Nq}\sum_{ij}\bb E\A_{ij}G_{ji}Q^{2n-1}+\bb E (d/(Nq)\cdot\ul{G}+ \ul{G}^2)Q^{2n-1}\prec \sum_{r=1}^{2n} \widehat{\cal E}^{r}\cal Q^{2n-r}
\end{equation}
instead of \eqref{5.5}. By Lemma \ref{lem2.2}, we have
\begin{equation} \label{4.134}
	\begin{aligned}
		\mbox{(III)'}=&\,\frac{1}{(N-d)Ndq}\sum_{ijkl}\bb E \chi_{ik}^{jl}(\cal A)\D_{ij}^{kl}(G_{ij}Q^{2n-1})	+\frac{d}{(N-d)Nq}\sum_{ij}\bb E G_{ij}Q^{2n-1}\\
		&-\frac{1}{(N-d)Ndq}\sum_{ij}\bb E (\A^3)_{ij}G_{ji}Q^{2n-1}+O(N^{-2}q^{-1})\cdot\sum_{ij:i\ne j}\bb E\cal M_{ij}(G_{ji}Q^{2n-1})\\
		&-\frac{1}{(N-d)Ndq}\sum_{ikl}\bb E \chi_{ik}^{il}(\cal A)\D_{ii}^{kl}(G_{ii}Q^{2n-1})-\frac{d}{(N-d)Nq}\sum_{i}\bb E G_{ii}Q^{2n-1}\\
		&+\frac{1}{(N-d)Ndq}\sum_{i}\bb E (\A^3)_{ii}G_{ii}Q^{2n-1}\eqd\, S_1+\cdots+S_7\,.
	\end{aligned}
\end{equation}
By the last relation of \eqref{2.1}, it is easy to see that $S_2=0$. Using Lemma \ref{lemAG} with $r=3$, we have $\sum_{ij}(\A^3)_{ij}G_{ij}=\tr (\A^3G)\prec Nd^{3/2}$. Thus
\begin{equation*}
	S_3=O({N^{-2}d^{-3/2}})\cdot \bb E |\tr (\cal A^3G)Q^{2n-1}| \prec N^{-1}\cal Q^{2n-1}\,.
\end{equation*}
Using resolvent identity and \eqref{4.42}, it can be easily shown that $S_4 \prec \sum_{r=1}^{2n} \widehat{\cal E}^{r}\cal Q^{2n-r}$.
In addition, we have
$
S_5\prec \sum_{r=1}^{2n} \widehat{\cal E}^{r}\cal Q^{2n-r},
$
and
\begin{multline*}
S_7=\frac{1}{(N-d)Ndq}\sum_{ij}\bb E (\A^2)_{ij}\cal A_{ji}G_{ii}Q^{2n-1}\\=\frac{d}{(N-d)N^2q}\sum_{ij}\bb E \cal A_{ji}G_{ii}Q^{2n-1}+O_{\prec}(\widehat{\cal E}) \cal Q^{2n-1}
=\frac{d^2}{(N-d)Nq}\bb E\ul{G}Q^{2n-1}+O_{\prec}(\widehat{\cal E}) \cal Q^{2n-1}\,,
\end{multline*}
where in the second step we used \eqref{1}. Thus $S_6+S_7=-d/(Nq)\bb E\ul{G}Q^{2n-1}$. As a result, \eqref{4.134} simplifies to
\begin{equation} \label{4.1666}
	\mbox{(III)'}=S_1-d/(Nq)\bb E\ul{G}Q^{2n-1}+\sum_{r=1}^{2n} O_{\prec}(\widehat{\cal E}^{r})\cdot\cal Q^{2n-r}\,.
\end{equation}
To examine the terms in $S_1$, we split
\begin{equation} \label{4.1888}
	\begin{aligned}
		S_1=&\,\frac{1}{(N-d)Ndq}\sum_{ijkl}\bb E \chi_{ik}^{jl}(\cal A)\D_{ij}^{kl}(G_{ij})Q^{2n-1}+\frac{1}{(N-d)Ndq}\sum_{ijkl}\bb E \chi_{ik}^{jl}(\cal A)G_{ij}\D_{ij}^{kl}(Q^{2n-1})\\
		&+\frac{1}{(N-d)Ndq}\sum_{ijkl}\bb E \chi_{ik}^{jl}(\cal A)\D_{ij}^{kl}(G_{ij})\D_{ij}^{kl}(Q^{2n-1})\eqd S_{1,1}+S_{1,2}+_{1,3}\,.
	\end{aligned}
\end{equation}

\subsection{Estimates of $S_{1,2}$ and $S_{1,3}$} 
Let us first look at the interaction terms. As we shall see, the steps are much easier compared to those in Section \ref{sec4.1.2}, due to the smallness of $\D_{ij}^{kl}Q$. By \eqref{diff} and \eqref{ward2}, we have
\begin{equation*} 
q^{-1}\partial_{ij}^{kl}Q \prec  q^{-1}|z+d/(Nq)+2\ul{G}|\cdot \frac{\psi+\im \widehat{m}}{N\eta} \prec q^{-1}(|z+d/(Nq)+2\widehat{m}|+\psi) \cal E_2\,,
\end{equation*}
and $q^{-s}(\partial_{ij}^{kl})^sQ  \prec q^{-s}\cal E_2$ for $s \geq 2$. Together with \eqref{3.2}, \eqref{4.3} and $\D_{ij}^{kl}Q \prec \widehat{\cal E}$ we get
\begin{equation} \label{4.49}
	\begin{aligned}
	\D_{ij}^{kl}(Q^{2n-1}) &\prec |\D_{ij}^{kl} Q| \cdot \sum_{r=2}^{2n} \widehat{\cal E}^{r-2} |Q|^{2n-r}\\
	&\prec	(q^{-1}(|z+d/(Nq)+2\widehat{m}|+\psi) \cal E_2+q^{-2}\cal E_2\big)	\sum_{r=2}^{2n} \widehat{\cal E}^{r-2} |Q|^{2n-r}\,.
		\end{aligned}
\end{equation}
By \eqref{4.49} and $\chi_{ik}^{jl}(\A)\leq \A_{ik}\A_{jl}$, we get
	\begin{align}  \label{4.50}
		S_{1,2}&\prec \frac{1}{N^2d^{3/2}}\sum_{ijkl} \bb E \bigg( |\A_{ik}\A_{jl}G_{ij}| (q^{-1}(|z+d/(Nq)+2\widehat{m}|+\psi)\cal E_2 +q^{-2}\cal E_2\big)	\sum_{r=2}^{2n} \widehat{\cal E}^{r-2} |Q|^{2n-r}\bigg)\nonumber\\
		&\prec \frac{(|z+d/(Nq)+2\widehat{m}|+\psi)  +q^{-1}}{N^2d^2}\sum_{i,j,k,l}\bb E\bigg(|\A_{ik}\A_{jl}G_{ij} \cal E_2| \sum_{r=2}^{2n} \widehat{\cal E}^{r-2} |Q|^{2n-r}\bigg)\\
		&\prec \frac{(|z+d/(Nq)+2\widehat{m}|+\psi) +q^{-1}}{N^2}\sum_{i,j}\bb E\bigg( |G_{ij} \cal E_2|\sum_{r=2}^{2n} \widehat{\cal E}^{r-2} |Q|^{2n-r}\bigg) \prec \sum_{r=2}^{2n} \widehat{\cal E}^{r} \cal Q^{2n-r}\nonumber\,.
	\end{align}
Here in the last step we used \eqref{ward2} and Jensen's inequality. Similarly, by \eqref{4.49}, $\chi_{ik}^{jl}(\A)\leq \A_{ik}\A_{jl}$ and $\D_{ij}^{kl}G_{ij}\prec q^{-1}$, we have
	\begin{align} \label{4.51}
\hspace{-0.2cm}		S_{1,3}&\prec \frac{1}{N^2d^{3/2}}\sum_{ijkl} \bb E \bigg( |\A_{ik}\A_{jl}q^{-1} (q^{-1}(|z+d/(Nq)+2\widehat{m}|+\psi)\cal E_2 +q^{-2} \cal E_2\big)	\sum_{r=2}^{2n}\widehat{\cal E}^{r-2} |Q|^{2n-r}\bigg)\nonumber\\
		&\prec \frac{(|z+d/(Nq)+2\widehat{m}|+\psi) +q^{-1}}{N^2d^{5/2}}\sum_{i,j,k,l}\bb E\bigg( |\A_{ik}\A_{jl} \cal E_2|\sum_{r=2}^{2n} \widehat{\cal E}^{r-2} |Q|^{2n-r}\bigg)\prec \sum_{r=2}^{2n} \widehat{\cal E}^{r} \cal Q^{2n-r}\,.
	\end{align}

\subsection{Computation of $S_{1,1}$}
The computation of $S_{1,1}$ is similar to that of $T_{1,1}$ in Section \ref{section 4.1.1}. 
Applying \eqref{4.3} with $\ell=2$, we get
\begin{align*}
	S_{1,1}=&\ \frac{1}{(N-d)Ndq^2}\sum_{ijkl}\bb E \chi_{ik}^{jl}(\cal A)\partial_{ij}^{kl}(G_{ij})Q^{2n-1}+\frac{1}{2(N-d)Ndq^3}\sum_{ijkl}\bb E \chi_{ik}^{jl}(\cal A)((\partial_{ij}^{kl})^2 G_{ij})Q^{2n-1}\nonumber\\
	&+\frac{1}{6(N-d)Ndq^4}\sum_{ijkl}\bb E \chi_{ik}^{jl}(\cal A)((\partial_{ij}^{kl})^3G_{kj}(\cal A+\theta \xi_{ik}^{lx}))Q^{2n-1}
	\eqd S_{1,1,1}+S_{1,1,2}+S_{1,1,3} 
\end{align*}
for some $\theta\in [0,1]$. 

Let us first compute $S_{1,1,1}$. By \eqref{diff}, we get
\begin{multline} \label{4.177}
	S_{1,1,1}=\frac{1}{(N-d)Ndq^2}\sum_{ijkl}\bb E \chi_{ik}^{jl}(\cal A)\big(-G_{ii}G_{jj}-G_{ij}G_{ji}-G_{ik}G_{lj}-G_{il}G_{kj}\\
	+G_{ii}G_{kj}+G_{ik}G_{ij}+G_{ij}G_{lj}+G_{il}G_{jj}\big)Q^{2n-1}\eqd \sum_{s=1}^{8}S_{1,1,1,s}\,.
\end{multline}
Recall the definition of $\chi$ in \eqref{2.4}. We have 
\begin{equation} \label{4.118}
	\begin{aligned}
		S_{1,1,1,1}=&-\frac{1}{(N-d)Ndq^2}\sum_{ij}\bb E\big(\A_{ij}(\A^3)_{ij}+d^2-d^2\A_{ij}-(\A^3)_{ij}\big)G_{ii}G_{jj}Q^{2n-1}\\
		=&-\frac{1}{(N-d)Ndq^2}\sum_{ij}\bb E\Big[\big(\A_{ij}(\A^3)_{ij}+d^2-d^2\A_{ij}-(\A^3)_{ij}\big)\\
&\quad \quad \quad  \cdot\big((G_{ii}-\ul{G})(G_{jj}-\ul{G})+\ul{G}(G_{ii}-\ul{G})+\ul{G}(G_{jj}-\ul{G})+\ul{G}^2\big)Q^{2n-1}\Big]\\
		\eqd&\, S_{1,1,1,1,1}+\cdots+S_{1,1,1,1,4}\,.
	\end{aligned}
\end{equation}
Similar as in \eqref{4.18}, using \eqref{2}, we get
	\begin{align}
		S_{1,1,1,1,1}=&-\frac{1}{(N-d)Ndq^2}\sum_{ij}\bb E\Big[\big(\A_{ij}d^3N^{-1}+d^2-d^2\A_{ij}-d^3N^{-1}\big)(G_{ii}-\ul{G})(G_{jj}-\ul{G})Q^{2n-1}\Big]\nonumber\\
		&\  +O_{\prec}(N^{-1/2}+d^{-1})(\psi^2+\cal E_1)\cal Q^{2n-1}\label{4.59}
		\\
		=&\ \frac{d}{N^2q^2}\sum_{ij}\bb E \A_{ij} (G_{ii}-\ul{G})(G_{jj}-\ul{G})Q^{2n-1}+O_{\prec}(\widehat{\cal E})\cal Q^{2n-1}\nonumber\,. 
	\end{align}
Here in the last step we used $\sum_i(G_{ii}-\ul{G})=0$. Let us denote $\widetilde{F}(\A)\deq (G_{ii}-\ul{G})(G_{jj}-\ul{G})Q^{2n-1}$. Applying Lemma \ref{lem2.2} to the first term on RHS of \eqref{4.59}, we get
	\begin{align} \label{4.60}
		S_{1,1,1,1,1}=&\ \frac{1}{(N-d)N^2q^2}\sum_{ijkl}\bb E \chi_{ik}^{jl}(\cal A)\D_{ij}^{kl} \widetilde{F}(\A)	+\frac{d^2}{(N-d)N^2q^2}\sum_{ij}\bb E \widetilde{F}(\A)\\
		&-\frac{1}{(N-d)N^2q^2}\sum_{ij}\bb E (\A^3)_{ij}\widetilde{F}(\A)+O(dN^{-3}q^{-2})\cdot\sum_{ij}\bb E\cal M_{ij}(F(\A))+\sum_{r=1}^{2n}O_{\prec}(\widehat{\cal E}^r)\cal Q^{2n-r}\nonumber\\
     	=&\ \frac{1}{(N-d)N^2q^2}\sum_{ijkl}\bb E \chi_{ik}^{jl}(\cal A)\D_{ij}^{kl} \widetilde{F}(\A)	
     	-\frac{1}{(N-d)N^2q^2}\sum_{ij}\bb E (\A^3)_{ij}\widetilde{F}(\A)+\sum_{r=1}^{2n}O_{\prec}(\widehat{\cal E}^r)\cal Q^{2n-r}\nonumber\,,
	\end{align}
where in the second step we used $\sum_{ij}\widetilde{F}(\A)=0$. By \eqref{3.2}, \eqref{4.3}, \eqref{diff}, and $\D_{ij}^{kl}Q \prec q^{-1}\widehat{\cal E}$, we have
\[
\D_{ij}^{kl} \widetilde{F}(\A)\prec ((\psi+\sqrt{\cal E_1})q^{-1}+q^{-2})\sum_{r=1}^{2n} \widehat{\cal E}^{r-1}|Q|^{2n-r}+(\psi+\sqrt{\cal E_1})^2q^{-1}\sum_{r=2}^{2n} \widehat{\cal E}^{r-1}|Q|^{2n-r} \prec \sum_{r=1}^{2n}\widehat{\cal E}^{r}|Q|^{2n-r}\,,
\]
which implies
\begin{equation} \label{4.61}
	\frac{1}{(N-d)N^2q^2}\sum_{ijkl}\bb E \chi_{ik}^{jl}(\cal A)\D_{ij}^{kl} \widetilde{F}(\A)\prec \frac{1}{N^3d}\sum_{ijkl} \bb E\bigg( \A_{ik}\A_{jl}\sum_{r=1}^{2n} \widehat{\cal E}^{r}|Q|^{2n-r}\bigg)	\prec \sum_{r=1}^{2n}\widehat{\cal E}^{r}
	\cal Q^{2n-r}\,.
\end{equation}
In addition, \eqref{2} and $\sum_{ij}\widetilde{F}(\A)=0$ imply
\begin{equation} \label{4.62}
	-\frac{1}{(N-d)N^2q^2}\sum_{ij}\bb E (\A^3)_{ij}\widetilde{F}(\A)+O_{\prec}(\widehat{\cal E})\cal Q^{2n-1} \prec \widehat{\cal E}
	\cal Q^{2n-1}\,.
\end{equation}
Combining \eqref{4.60} -- \eqref{4.62} we get
\begin{equation} \label{4.63}
	S_{1,1,1,1,1} \prec \sum_{r=1}^{2n}\widehat{\cal E}^{r}\cal Q^{2n-r}\,.
\end{equation}
Similarly to \eqref{4.59}, we can show that
\[
S_{1,1,1,1,2}=\frac{d}{N^2q^2}\sum_{ij}\bb E \A_{ij} \ul{G}(G_{ii}-\ul{G})Q^{2n-1}+O_{\prec}(\widehat{\cal E})\cal Q^{2n-1}\,.
\]
By first summing over $j$ and then summing over $i$, the first term on RHS of the above vanishes, and thus
\begin{equation} \label{4.67}
S_{1,1,1,1,2}\prec	 \sum_{r=1}^{2n} \widehat{\cal E}^{r}\cal Q^{2n-r}\,.
\end{equation}
Similarly, 
\begin{equation} \label{4.64}
	S_{1,1,1,1,3} \prec \sum_{r=1}^{2n} \widehat{\cal E}^{r}\cal Q^{2n-r}\,.
\end{equation}
Moreover, applying \eqref{3} with $r=4$, we have
\begin{equation} \label{4.65}
	\begin{aligned}
	S_{1,1,1,1,4}&=-\frac{1}{(N-d)Ndq^2}\bb E( \tr \A^4+N^2d^2-Nd^3-Nd^3)\ul{G}^2 Q^{2n-1}\\
	&=-\bb E \ul{G}^2 Q^{2n-1}+\sum_{r=1}^{2n} O_{\prec}(\widehat{\cal E}^{r})\cdot \cal Q^{2n-r}\,.
		\end{aligned}
\end{equation} 
Inserting \eqref{4.63} -- \eqref{4.65} into \eqref{4.118}, we get
\begin{equation} \label{4.66}
	S_{1,1,1,1}=-\bb E \ul{G}^2 Q^{2n-1}+\sum_{r=1}^{2n} O_{\prec}(\widehat{\cal E}^{r})\cdot \cal Q^{2n-r}\,.
\end{equation} 

When $s=2,...,8$, the estimates of $S_{1,1,1,s}$ are relatively simple. By $\chi_{ik}^{jl}\leq \A_{ik}\A_{jl}$ and first summing over indices $k,l$, it is not hard to see that $S_{1,1,1,2}\prec \widehat{\cal E}\cal Q^{2n-1}$. For the next term, we have
\begin{align*}
	S_{1,1,1,3}&=-\frac{1}{(N-d)Ndq^2}\sum_{ijkl}\bb E (\A_{ik}\A_{lj}-\A_{ik}\A_{jl}\A_{lk}-\A_{ik}\A_{jl}\A_{ij}+\A_{ik}\A_{jl}\A_{ij}\A_{lk})G_{ik}G_{lj}Q^{2n-1}\nonumber\\
	&=-\frac{1}{(N-d)Ndq^2}\sum_{ijkl}\bb E \A_{ik}\A_{jl}\A_{ij}\A_{lk}G_{ik}G_{lj}Q^{2n-1}+O(d^{-1})\cal Q^{2n-1}\,, 
\end{align*}
where in the second step we used Lemma \ref{lemAG} with $r=1$. The first term on RHS of the above can be bounded by
\[
O(N^{-2}d^{-2}) \cdot \bb E \Big(\sum_{ijkl}|\A_{ik}\A_{lk}G^2_{jl}|\Big)^{1/2}\Big(\sum_{ijkl}|\A_{jl}\A_{ij}G^2_{ik}|\Big)^{1/2}|Q|^{2n-1}\prec \widehat{\cal E} \cal Q^{2n-1}\,.
\]
Hence $S_{1,1,1,3}\prec \widehat{\cal E}\cal Q^{2n-1}$. Similarly $S_{1,1,1,4}\prec \widehat{\cal E}\cal Q^{2n-1}$. We have
\begin{equation*}
	S_{1,1,1,5}=\frac{1}{(N-d)Ndq^2}\sum_{ijkl}\bb E \chi_{ik}^{jl}(\A) (G_{ii}-\ul{G})G_{kj}Q^{2n-1}+\frac{1}{(N-d)Ndq^2}\sum_{ijkl}\bb E \chi_{ik}^{jl}(\A) \ul{G}G_{kj}Q^{2n-1}\,.
\end{equation*}
By \eqref{ward2}, the first term on RHS of above can be bounded by
\[
O_{\prec}(N^{-2}d^{-2})(\psi+\sqrt{\cal E_1})\sum_{ijkl} \bb E \A_{ik}\A_{jl} |G_{kj}||Q|^{2n-1} \prec N^{-2}(\psi+\sqrt{\cal E_1})\sum_{jk} \bb E |G_{kj}||Q|^{2n-1} \prec \widehat{\cal E}\cal Q^{2n-1}\,;
\]
the second term on RHS can be estimated by
\begin{equation*}
	\begin{aligned}
	&\,\frac{1}{(N-d)Ndq^2}\sum_{kj} \bb E( d^2-2d (\A^2)_{jk}+ (\A^2)_{jk}^2 )\ul{G}G_{kj}Q^{2n-1}\\
	=&\,\frac{1}{(N-d)Ndq^2} \sum_{kj} \bb E (\A^2)_{jk}^2 G_{kj}Q^{2n-1}+O(N^{-1})\cal Q^{2n-1}\\
	=&\,\frac{1}{(N-d)Ndq^2} \sum_{kj} \bb E \big(((\A^2)_{jk}-d^2N^{-1})^2+2d^2N^{-1}((A^2)_{jk}-d^2N^{-1})\big) G_{kj}Q^{2n-1}+O(N^{-1})\cal Q^{2n-1}\\
\prec &\, N^{-2}d^{-2}\bb E \sum_{kj}((\A^2)_{kj}-d^2N^{-1})^2  |Q|^{2n-1}+N^{-3} \bb E \Big(\sum_{kj}((\A^2)_{kj}-d^2N^{-1})^2 \sum_{kj} |G^2_{kj}| \Big)^{1/2} |Q|^{2n-1}\\
&+O(N^{-1})\cal Q^{2n-1} \prec \widehat{\cal E} \cal Q^{2n-1}
\end{aligned}
\end{equation*}
Here in the first step we used $\sum_{k}G_{kj}=0$ and Lemma \ref{lemAG}, in the second step we used $\sum_{k}G_{kj}=0$, and in the last step we used
\[
\sum_{kj}( (\A^2)_{kj}-d^2N^{-1})^2=\tr \A^4 -d^4 \prec d^2 N+N^2
\]
which is a consequence of \eqref{3}. Thus $S_{1,1,1,5}\prec \widehat{\cal E}\cal Q^{2n-1}$, and similarly we have $S_{1,1,1,8}\prec \widehat{\cal E}\cal Q^{2n-1}$. Next, we have
\begin{equation*}
	\begin{aligned}
		S_{1,1,1,6}&=\frac{1}{(N-d)Ndq^2}\sum_{ijk}\bb E (d\A_{ik}-\A_{ik}(\A^2)_{jk}-d\A_{ik}\A_{ij}+(\A^2)_{jk}\A_{ik}\A_{ij})G_{ik}G_{ij}Q^{2n-1}\\
		&=\frac{1}{(N-d)Ndq^2}\bb E \Big(-\sum_{ik} \A_{ik}G_{ik}(\A^2G)_{ik}-d\sum_i\bb E (\A G)_{ii}^2 +\sum_{ijk}(\A^2)_{jk}\A_{ik}\A_{ij}G_{ik}G_{ij}\Big)Q^{2n-1}\\
		&=\frac{1}{(N-d)Ndq^2}\sum_{ijk}\bb E (\A^2)_{jk}\A_{ik}\A_{ij}G_{ik}G_{ij}Q^{2n-1}+O_{\prec}(\widehat{\cal E})\cal Q^{2n-1}\\
		&\prec N^{-2}d^{-2} \bb E \Big[|Q|^{2n-2} \sum_{jk}(\A^2)_{jk}\sum_{i}|G_{ik}G_{ij}|\Big]+\widehat{\cal E}\cal Q^{2n-1} \prec \widehat{\cal E}\cal Q^{2n-1}\,,
	\end{aligned}
\end{equation*}
where in the second step we used $\sum_j G_{ij}=0$, and in the third step we used Lemma \ref{lemAG}. Similarly, we also have $S_{1,1,1,7}\prec \widehat{\cal E}\cal Q^{2n-1}$. 

Now we have finishes estimates of $S_{1,1,1,s}$ for all $s =2,...,8$. Together with \eqref{4.177} and \eqref{4.66} we get
\begin{equation} \label{kkk}
	S_{1,1,1}=-\bb E \ul{G}^2 Q^{2n-1}+\sum_{r=1}^{2n} O_{\prec}(\widehat{\cal E}^{r})\cdot \cal Q^{2n-r}\,.
\end{equation}

The estimate of $S_{1,1,2}$ is very similar to those of $S_{1,1,1,2},...,S_{1,1,1,8}$: by \eqref{diff}, there is at least one off-diagonal factor of $G$ in every term of $S_{1,1,2}$. In addition, compared to $S_{1,1,1,2},...,S_{1,1,1,8}$, there is an extra factor of $q^{-1}\prec \widehat{\cal E}^{1/2}$ in $S_{1,1,2}$. Thus we can show that
\begin{equation} \label{kkkk}
	S_{1,1,2}\prec \widehat{\cal E} \cal Q^{2n-1}\,.
\end{equation}
By resolvent identity, \eqref{diff} and $\max_{ij}|G_{ij}| \prec 1$, it is not hard to see that $(\partial_{ij}^{kl})^3G_{kj}(\cal A+\theta \xi_{ik}^{lx})\prec 1$, hence
\begin{equation} \label{kkkkk}
	S_{1,1,3}\prec N^{-2}d^{-3}\sum_{ijkl} \bb E \A_{ik}\A_{jl}|Q|^{2n-1}\prec \widehat{\cal E}\cal Q^{2n-1}\,. 
\end{equation}
Combining \eqref{kkk} -- \eqref{kkkkk} we have
\begin{equation} \label{k6}
		S_{1,1}=-\bb E \ul{G}^2 Q^{2n-1}+\sum_{r=1}^{2n} O_{\prec}(\widehat{\cal E}^{r})\cdot \cal Q^{2n-r}\,.
\end{equation}
Inserting \eqref{4.1888}, \eqref{4.50}, \eqref{4.51} and \eqref{k6} into \eqref{4.1666}, we get
\[
\mbox{(III)'}=-\bb E \ul{G}^2 Q^{2n-1}-d/(Nq)\bb E\ul{G}Q^{2n-1}+\sum_{r=1}^{2n} O_{\prec}(\widehat{\cal E}^{r})\cdot \cal Q^{2n-r}\,.
\] 
Since $\mbox{(IV)'}\deq \bb E (d/(Nq)\cdot\ul{G}+ \ul{G}^2)Q^{2n-1}$, we have finished the proof of \eqref{4.43}. This concludes the proof of Proposition \ref{prop4.5}.

\section{Edge rigidity and Universality} \label{sec6}
Throughout this section we assume
\begin{equation} \label{2.333}
N^{2/3+\tau}\leq d \leq N/2
\end{equation}
for some fixed $\tau>0$, and fix parameters
\begin{equation} \label{jieluote}
\mu \in (0,\tau/100)\,, \quad \delta \in (0,\mu/10)\,, \quad \nu \in (0,\delta/10)\,.
\end{equation}
We abbreviate
$$
A \deq q^{-1}\A\,.
$$
We shall prove Theorems \ref{thm rigidity} and \ref{theorem main result} at the right edge of the spectrum; the left edge case follows analogously.

\subsection{Improved estimate of averaged Green function}
Recall the notion of $\b D$ in \eqref{spectral}. Let us define the regime 
$$
\b S\equiv \b S_\delta \deq \{z=E+\ii \eta:  2-d/(Nq)+N^{-2/3+\delta}\leq E
\leq \delta^{-1},  N^{-2/3}\leq \eta \leq \delta^{-1}\}\subset \b D\,,
$$
and we use $\kappa\equiv\kappa(E)\deq |(E+d/(Nq))^2-4|$ to denote the distance to edge. We first prove the following consequence of  Theorem \ref{theorem 4.1} and Proposition \ref{prop4.5}.

\begin{proposition} \label{propsition 6.1}
	We have 
	\begin{equation}
		\begin{aligned} \label{6.2}
			|\ul{G}-\widehat{m}| \prec&\  \frac{1}{N(\kappa+\eta)}+\frac{1}{d(\kappa+\eta)^{1/2}}+\frac{1}{N^2(\kappa+\eta)^{5/2}}+\frac{1}{(N\eta)^2(\kappa+\eta)^{1/2}}\\
			&+\frac{1}{N^{2/3}(\kappa+\eta)^{1/2}}
		\end{aligned}
	\end{equation}
for $z \in \b S$, and 
\begin{equation} \label{2.53}
	|\ul{G}-\widehat{m}|\prec \frac{1}{N\eta}+\frac{1}{d^{1/2}}+\frac{(\kappa+\eta)^{1/6}}{(N\eta)^{2/3}}
\end{equation}
for all $z \in \b D$. In addition, we have
\begin{equation} \label{2.54}
	\max_{ij} |G_{ij}-\delta_{ij}\widehat{m}|\prec \frac{1}{(N\eta)^{1/2}}+\frac{1}{d^{1/2}}
\end{equation}
for all $z \in \b D$.
\end{proposition}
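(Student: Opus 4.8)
The three estimates are consequences of the self-consistent equation $Q \equiv 1 + z\ul G + \ul G^2 \prec \widehat{\cal E}$ from Proposition~\ref{prop4.5}, fed into a standard bootstrap along the spectral domain, combined with the stability of the quadratic equation $1 + zm + m^2 = 0$ near the edge. The plan is to first establish \eqref{2.53} and \eqref{2.54} together by a continuity (bootstrap) argument on $\b D$, then refine inside $\b S$ to get the sharper \eqref{6.2}.

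First I would set up the bootstrap. We know from Theorem~\ref{theorem 4.1} that $|\ul G - m| \prec (N\eta)^{-1/4} + d^{-1/4}$ and $\max_{ij}|G_{ij}-\delta_{ij}m|\prec (N\eta)^{-1/4}+d^{-1/4}$ on all of $\b D$, which serves as the a priori input $\psi_0 \deq (N\eta)^{-1/4}+d^{-1/4}$ for Proposition~\ref{prop4.5}. Given $|\ul G - m|\prec \psi$, the proposition yields $|Q| \prec \widehat{\cal E} = \widehat{\cal E}(\psi)$; then writing $Q = (\ul G - m)(\ul G + m + z) + (1+zm+m^2) = (\ul G - m)(z + 2m) + O(|\ul G - m|^2)$ and using the edge stability bound $|z+2m|\asymp \sqrt{\kappa+\eta}$ (valid since $m$ solves the quadratic and $1+zm+m^2=0$), one inverts to get $|\ul G - m| \prec \widehat{\cal E}/\sqrt{\kappa+\eta} + (\text{self-improving quadratic term})$. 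Because the map $\psi \mapsto$ (new bound) is a contraction down to the fixed point, iterating finitely many times collapses $\psi$ to the claimed \eqref{2.53}; the term $(\kappa+\eta)^{1/6}(N\eta)^{-2/3}$ arises precisely from balancing the $\cal E_1^{1/4}\cal E_2^{1/2}(\psi+|z+2m|)^{1/2}$ contribution in $\widehat{\cal E}$ against $\sqrt{\kappa+\eta}$ when $\psi$ is already small, and $d^{-1/2}$ from the $\cal E_1 \supset d^{-1}$ and $d^{-1/2}\psi$ pieces. Feeding the resulting $\psi$ back into \eqref{yahaha} then gives \eqref{2.54}.

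For \eqref{6.2} the point is that on $\b S$ we are at distance $\kappa \gg \eta$ (or at least $\kappa+\eta$ comfortably positive), so $\im m \asymp \eta/\sqrt{\kappa+\eta}$ rather than $\sqrt{\kappa+\eta}$, which shrinks $\cal E_2 = (\psi+\im m)/(N\eta)$ substantially. Re-running the same inversion of $Q\prec\widehat{\cal E}$ with this improved $\im m$, and unfolding $\widehat{\cal E} = \cal E_1 + \cal E_1^{1/4}\cal E_2^{1/2}(\psi+|z+2m|)^{1/2} + d^{-1/2}\psi$ after dividing by $|z+2m|\asymp\sqrt{\kappa+\eta}$, produces the five explicit terms in \eqref{6.2}: $\tfrac1{N(\kappa+\eta)}$ from $\cal E_2/\sqrt{\kappa+\eta}$ with $\im m \asymp \eta/\sqrt{\kappa+\eta}$; $\tfrac1{d(\kappa+\eta)^{1/2}}$ from $d^{-1}$; $\tfrac1{N^2(\kappa+\eta)^{5/2}}$ and $\tfrac1{(N\eta)^2(\kappa+\eta)^{1/2}}$ from the cross term $\cal E_1^{1/4}\cal E_2^{1/2}(\kappa+\eta)^{1/4}/\sqrt{\kappa+\eta}$ after expanding $\cal E_1 = \cal E_2 + d^{-1}$; and $\tfrac1{N^{2/3}(\kappa+\eta)^{1/2}}$ from the $d^{-1/2}\psi$ term together with the degree lower bound $d \gg N^{2/3}$ in \eqref{2.333}. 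Throughout one must check that the a priori $\psi$ already obtained from \eqref{2.53} is small enough that self-referential terms involving $\psi$ on the right side can be absorbed, which again is a finite iteration.

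The main obstacle I anticipate is not any single estimate but the careful tracking of the bootstrap region: one has to verify that the gain in each iteration is uniform in $z\in\b D$ (resp.\ $\b S$) and that the continuity argument connecting large $\eta$ (where everything is trivially $O(1)$) to small $\eta$ does not break where $\kappa+\eta$ is smallest, i.e.\ right at the edge $E\approx 2$, $\eta\approx N^{-2/3}$. There the factor $(\kappa+\eta)^{-1/2}$ blows up like $N^{1/3}$ and one needs the numerators to decay fast enough — this is exactly where the hypothesis $d\gg N^{2/3}$ is used, and where the bound \eqref{2.53} is genuinely optimal up to $N^\epsilon$. A secondary technical point is justifying the replacement of $\im G_{jj}$ by $\im \ul G$ via eigenvector delocalization (Corollary~\ref{cor4.2}) uniformly, so that \eqref{ward2} holds with the averaged quantity; this is what lets the proof of Proposition~\ref{prop4.5} go through with $\psi$ rather than the cruder $\phi$ of Proposition~\ref{prop4.2}.
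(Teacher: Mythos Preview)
Your approach is essentially the same as the paper's: invoke the stability estimate $|\ul G - m| \prec \widehat{\cal E}/\sqrt{\widehat{\cal E}+\kappa+\eta}$ (which the paper records as \eqref{2.61}, derived from Proposition~\ref{prop4.5} together with the monotonicity of $\eta\mapsto\widehat{\cal E}$), expand $\widehat{\cal E}$ using the asymptotics of $\im m$ and $|z+2m|$ on the relevant domain, and iterate in $\psi$ until the self-referential terms are absorbed. The order in which you treat the three estimates is immaterial since parts (i) and (ii) are independent and (iii) follows from (ii) via \eqref{yahaha}.

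One correction to your term-by-term accounting: the contribution $\tfrac{1}{N^{2/3}(\kappa+\eta)^{1/2}}$ in \eqref{6.2} does \emph{not} come from the $d^{-1/2}\psi$ piece of $\widehat{\cal E}$ together with $d\gg N^{2/3}$ --- that piece, after dividing by $\sqrt{\kappa+\eta}$, is bounded by $N^{-\nu}\psi$ on $\b S$ and is absorbed upon iteration. The $N^{-2/3}$ term arises instead, after a first application of Young's inequality $\cal E_1^{1/4}\cal E_2^{1/2}c^{1/2}\le \cal E_1 + \cal E_2^{2/3}c^{2/3}$, from the portion of $\cal E_2^{2/3}|z+2m|^{2/3}$ in which $\cal E_2$ contributes $\im m/(N\eta)\asymp 1/(N\sqrt{\kappa+\eta})$ and $|z+2m|\asymp\sqrt{\kappa+\eta}$, giving exactly $N^{-2/3}$. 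Likewise, $\tfrac{1}{N^2(\kappa+\eta)^{5/2}}$ and $\tfrac{1}{(N\eta)^2(\kappa+\eta)^{1/2}}$ do not drop out of a direct expansion of the cross term; they emerge from a \emph{second} Young inequality applied to the residual $\psi^{2/3}$-terms that survive the first pass. These are bookkeeping details rather than structural gaps --- your argument is sound.
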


\begin{proof}
Since for each fixed $E$, the function $\eta \mapsto\widehat{\cal E}(E+\ii \eta)$ is non-increasing for $\eta>0$, a standard stability analysis (see e.g. \cite[Lemma 5.4]{BEKYY}) and Proposition \ref{prop4.5} imply
	\begin{equation} \label{2.61}
		|\ul{G}-\widehat{m}| \prec \frac{\widehat{\cal E}}{\sqrt{\widehat{\cal E}+\kappa+\eta}}
	\end{equation}
	for all $z \in \b D$. 
	
(i) Let $z \in \b S$. Recall the definition of $\widehat{\cal E}$ in Proposition \ref{prop4.5}. Note that
	\[
\kappa \asymp E+d/(Nq)-2, \quad	\im \widehat{m}\asymp \frac{\eta}{(\kappa+\eta)^{1/2}}\quad \mbox{and} \quad |z+d/(Nq)+2\widehat{m}|\asymp (\kappa+\eta)^{1/2}\,,
	\]
together with Young's inequality we get
		\begin{align}
			\widehat{\cal E} &\prec \cal E_1+\cal E_2^{2/3}(\psi+|z+d/(Nq)+2\widehat{m}|)^{2/3}+d^{-1/2}\psi\nonumber\\
			&\prec \frac{\psi}{N\eta}+\frac{1}{N(\kappa+\eta)^{1/2}}+\frac{1}{d}+\Big(\frac{\psi}{N\eta}+\frac{1}{N(\kappa+\eta)^{1/2}}\Big)^{2/3}(\psi+(\kappa+\eta)^{1/2})^{2/3}+\frac{\psi}{d^{1/2}} \label{1234}\\
			& \prec \frac{\psi}{N\eta}+\frac{1}{N(\kappa+\eta)^{1/2}}+\frac{1}{d}+\frac{\psi^{4/3}}{(N\eta)^{2/3}}+\frac{\psi^{2/3}}{N^{2/3}(\kappa+\eta)^{1/3}}+\frac{\psi^{2/3}(\kappa+\eta)^{1/3}}{(N\eta)^{2/3}} +\frac{1}{N^{2/3}}+\frac{\psi}{d^{1/2}}\nonumber\,.
		\end{align}
	By \eqref{2.61} and the fact that $x \mapsto x/\sqrt{x+\kappa+\eta}$ is increasing, we know that 
		\begin{align}
			|\ul{G}-\widehat{m} |
			\prec &\ \frac{\psi}{N\eta(\kappa+\eta)^{1/2}}+\frac{1}{N(\kappa+\eta)}+\frac{1}{d(\kappa+\eta)^{1/2}}+\frac{\psi}{(N\eta)^{1/2}(\kappa+\eta)^{1/4}}+\frac{\psi^{2/3}}{N^{2/3}(\kappa+\eta)^{5/6}}\nonumber\\
			&+\frac{\psi^{2/3}}{(N\eta)^{2/3}(\kappa+\eta)^{1/6}}+\frac{1}{N^{2/3}(\kappa+\eta)^{1/2}}+\frac{\psi}{d^{1/2}(\kappa+\eta)^{1/2}} \label{kkkkkk}\\
			\prec &\  \frac{1}{N(\kappa+\eta)}+\frac{1}{d(\kappa+\eta)^{1/2}}+\frac{1}{N^2(\kappa+\eta)^{5/2}}+\frac{1}{(N\eta)^2(\kappa+\eta)^{1/2}}+\frac{1}{N^{2/3}(\kappa+\eta)^{1/2}}+N^{-\nu}\psi \nonumber
		\end{align}
	provided that $|\ul{G}-\widehat{m}|\prec \psi$. Here in the first step the fourth term is obtained through
	\[
\frac{\psi^{4/3}}{(N\eta)^{2/3}} \cdot (\widehat{\cal E}+\kappa+\eta)^{-1/2}\leq 	\frac{\psi^{4/3}}{(N\eta)^{2/3}} \cdot \bigg(\frac{\psi^{4/3}}{(N\eta)^{2/3}}\bigg)^{-1/4} \cdot (\kappa+\eta)^{-1/4}=\frac{\psi}{(N\eta)^{1/2}(\kappa+\eta)^{1/4}}\,,
	\] 
	and in last step we used $\kappa+\eta \geq N^{-2/3+\delta}$, $\eta \geq  N^{-2/3}$ and $d \geq N^{2/3+\tau}$. Iterating \eqref{kkkkkk}, we obtain \eqref{6.2}.
	
(ii) Let $z \in \b D$. We have
\begin{equation*} 
	\im \widehat{m} =O(\sqrt{\kappa+\eta})\quad \mbox{and} \quad \quad |z+d/(Nq)+2\widehat{m}| \asymp (\kappa+\eta)^{1/2}\,.
\end{equation*}
Similar to \eqref{1234}, we get
\begin{equation*}
	\begin{aligned}
		\widehat{\cal E} &\prec \frac{\psi}{N\eta}+\frac{(\kappa+\eta)^{1/2}}{N\eta}+\frac{1}{d}+\Big(\frac{\psi}{N\eta}+\frac{(\kappa+\eta)^{1/2}}{N\eta}\Big)^{2/3}(\psi+(\kappa+\eta)^{1/2})^{2/3}+\frac{\psi}{d^{1/2}}\\
		&\prec \frac{\psi}{N\eta}+\frac{(\kappa+\eta)^{1/2}}{N\eta}+\frac{1}{d}+\frac{\psi^{4/3}}{(N\eta)^{2/3}}+\frac{(\kappa+\eta)^{2/3}}{(N\eta)^{2/3}}+\frac{\psi}{d^{1/2}}\,.
	\end{aligned}
\end{equation*}
By \eqref{2.61} and the fact that $x \mapsto x/\sqrt{x+\kappa+\eta}$ is increasing, we get
\begin{equation*} 
	\begin{aligned}
		|\ul{G}-\widehat{m}| &\prec \Big(\frac{\psi}{N\eta}\Big)^{1/2}+\frac{1}{N\eta}+\frac{1}{d^{1/2}}+\frac{\psi^{2/3}}{(N\eta)^{1/3}}+\frac{(\kappa+\eta)^{1/6}}{(N\eta)^{2/3}}+\frac{\psi^{1/2}}{d^{1/4}}
	\end{aligned}
\end{equation*}
provided that $|\ul{G}-\widehat{m}| \prec \psi$. Iterating the above yields \eqref{2.53} as desired.

(iii)  The estimate \eqref{2.54} is a direct consequence of \eqref{yahaha} and \eqref{2.53}.
\end{proof}

\subsection{Proof of Theorem \ref{thm rigidity}}
We shall need the following bound on the magnitude of $\lambda_2,\lambda_N$ as an input, which follows from \cite[Theorem A]{TY19}.

\begin{theorem} \label{thmrefbound}
	 For any fixed $D>0$, there exists a constant $L\equiv L(D)>0$ such that
	\[
	\mathbb P(|\lambda_N/q|\geq L)+\mathbb P(|\lambda_2/q|\geq L) =O_D(N^{-D})\,.
	\]
\end{theorem}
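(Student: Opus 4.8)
The statement is purely an a priori input, to be used as the seed for the bootstrap leading to Theorem~\ref{thm rigidity}, so the plan is simply to recast the operator-norm estimate of Tikhomirov and Youssef in the normalization used here. The first step is the elementary linear-algebra identity: since $\A \b e = d\b e$ and $P_\bot\A = \A P_\bot$ by \eqref{2.1}, we have $P_\bot\A P_\bot = \A - d\,\b e\b e^{*} = \sum_{i\geq 2}\lambda_i\b u_i\b u_i^{*}$, whose eigenvalues are $0,\lambda_2,\dots,\lambda_N$. Because $\lambda_2\geq\lambda_3\geq\cdots\geq\lambda_N$, the largest absolute value among $\lambda_2,\dots,\lambda_N$ is attained at one of the two endpoints, hence
\[
\|P_\bot\A P_\bot\|_{\mathrm{op}}=\max\{|\lambda_2|,|\lambda_N|\}=\max\{\lambda_2,|\lambda_N|\}\,,
\]
where the last equality also covers the (atypical) case $\lambda_2<0$, in which $|\lambda_2|\leq|\lambda_N|$.

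Next I would quote \cite[Theorem A]{TY19} in the form it provides: for any fixed $D>0$ there is a constant $C=C(D)$ such that, uniformly over $1\leq d\leq N/2$,
\[
\mathbb{P}\big(\|P_\bot\A P_\bot\|_{\mathrm{op}}\geq C\sqrt{d}\,\big)=O_D(N^{-D})\,.
\]
Then comes the rescaling, which is the only computation. From \eqref{3.1} and $d\leq N/2$ (so $N-d\geq N/2$) we get $\tfrac12 d\leq q^{2}=d(N-d)/N\leq d$, i.e. $1\leq \sqrt{d}/q\leq\sqrt 2$. Therefore, on the complement of the event above,
\[
\max\{|q^{-1}\lambda_2|,|q^{-1}\lambda_N|\}\leq q^{-1}\|P_\bot\A P_\bot\|_{\mathrm{op}}\leq \sqrt 2\,C\,.
\]
Taking $L\deq\sqrt 2\,C(D)$ and using a union bound for the two probabilities in the statement gives the claim.

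There is no substantive obstacle: the estimate is used as a black box, and the only point deserving a remark is the $D$-dependence of $L$, which reflects that the cut-off in \cite[Theorem A]{TY19} must be enlarged to push the tail below $N^{-D}$; under the standing assumption \eqref{2.333} the degree $d$ is polynomially large in $N$, so this is harmless. In writing it up I would simply state the two displays above and the one-line deduction, rather than reproducing any part of the proof of \cite{TY19}.
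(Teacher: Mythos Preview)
Your proposal is correct and matches the paper's approach exactly: the paper gives no proof of this statement beyond the attribution ``which follows from \cite[Theorem A]{TY19}'', and you have simply spelled out the elementary rescaling from $\sqrt{d}$ to $q$ that this entails. There is nothing to add.
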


\textit{The upper bound.} Let $z=E+\ii N^{-2/3} \in \b S$. By \eqref{6.2} and $\kappa(E) \geq N^{-2/3+\delta}$, we get
\[
\im \ul{G}(z) \leq |\ul{G}(z)-\widehat{m}(z)|+\im \widehat{m}(z) \prec \frac{1}{N^{1+\nu}\eta}+\frac{\eta}{\sqrt{\kappa+\eta}} \prec \frac{1}{N^{1+\nu}\eta}\,.
\]
This implies that whenever $E\in [2-d/(Nq)+N^{-2/3+\delta},\delta^{-1}]$, with very high probability, there is no eigenvalue of $A$ in the interval $[E-N^{-2/3},E+N^{-2/3}]$. Together with Theorem \ref{thmrefbound}, we get
\begin{equation} \label{upper}
(\lambda_2/q-d/(Nq)-2)_+\prec N^{-2/3+\delta}\,.
\end{equation}

\textit{The lower bound.} Let $\widehat{\b S}\deq \{z=E-d/(Nq)+\ii \eta:2- N^{-2/3+\delta}\leq E\leq  2+N^{-2/3+\delta}, N^{-2/3-\delta/3}
\leq \eta \leq N^{-2/3} \}\subset \b D$, one can easily deduce from \eqref{2.333} and \eqref{2.53} that
\begin{equation} \label{2.533}
	|\ul{G}-\widehat{m}| \prec N^{-1/3+7\delta/18}
\end{equation}
for all $z \in \widehat{\b S}$. Thus
\begin{equation} \label{2.78}
	\im \ul{G}\leq |\ul{G}-\widehat{m}|+\im \widehat{m} \prec N^{-1/3+7\delta/18}+(\eta+\kappa)^{1/2}\prec N^{-1/3+\delta/2}\,.
\end{equation}
Let $f : \bb R \to [0,1]$ be a smooth function such that $f(x)=1$ for $|x+d/(Nq)-2|\leq N^{-2/3+\delta}-N^{-2/3}$, $f(x)=0$ for $|x+d/(Nq)-2|\geq N^{-2/3+\delta}$ and $\|f^{(j)}\|_{\infty}=O(N^{2j/3})$ for all fixed $j\in \bb N_+$. We see that
\begin{align} \label{2.66}
	&\,|\varrho_A([2-d/(Nq)-N^{-2/3+\delta},2-d/(Nq)+ N^{-2/3+\delta}])-N^{-1}\tr f(A)|\nonumber\\
	\nonumber \leq &\,\varrho_A([2-d/(Nq)-N^{-2/3+\delta},2-d/(Nq)- N^{-2/3+\delta}+N^{-2/3}])\nonumber\\
	&+\varrho_A([2-d/(Nq)+N^{2/3+\delta}-N^{-2/3},2-d/(Nq)+ N^{-2/3+\delta}])\\ \nonumber
	\leq &\,2N^{-2/3}\big( \im \ul{G}(2-d/(Nq)-N^{-2/3+\delta}+\ii N^{-2/3})+\im \ul{G}(2-d/(Nq)+N^{-2/3+\delta}+\ii N^{-2/3})\big)  \nonumber\\
	\prec&\, N^{-1+\delta/2}\nonumber\,,
\end{align}
where in the last step we used \eqref{2.78}. Now we compute $N^{-1}\tr f(A)$. Set $l \deq \ceil{3\delta^{-1}}$, and let $\tilde{f}$ be the almost analytic extension of $f$, defined by
\[
\tilde{f}(x)=f(x)+\sum_{j=1}^l\frac{1}{j!}(\ii y)^j f^{(j)}(x)\,.
\]
We define the regime $D \deq \{w=x+\ii y: x \in \bb R, |y|\leq N^{-2/3-\delta/3}\}$. Note that $\lambda_1/q=d/q \notin \supp f$. By \cite[Lemma 3.5]{H}, we have
\begin{equation*} 
	\begin{aligned} 
		&N^{-1}\tr f(A)-\int_{\bb R} f(x)\varrho(x+d/(Nq))\dd x\\
		=&-\frac{\ii}{2\pi}\oint_{\partial D} \tilde{f}(w) (\ul{G}(w)-\widehat{m}(w))\, \dd w+ \frac{1}{\pi}\int_{D} \partial_{\bar{w}}\tilde{f}(w)  (\ul{G}(w)-\widehat{m}(w))\dd^2  w\,.
	\end{aligned}
\end{equation*}
By the trivial bound $|\ul{G}(w)| \leq |y|^{-1}$, we see that
\[
\bigg|\frac{1}{\pi}\int_{D} \partial_{\bar{w}}\tilde{f}(w) (\ul{G}(w)-\widehat{m}(w))\dd^2  w \bigg| = O(1)\cdot \int_D |y^{l-1}f^{(l+1)}(x)|\, \dd^2 w =O\big(N^{-(2/3+\delta/3)l}\cdot N^{2l/3}\big)=O(N^{-1})\,.
\]
By \eqref{2.533} and $\|f\|_1=O(N^{2/3+\delta})$, we have
\[
\bigg|-\frac{\ii}{2\pi}\oint_{\partial D} \tilde{f}(w) (\ul{G}(w)-\widehat{m}(w))\, \dd w\bigg|\prec N^{-1/3+7\delta/18} \oint_{\partial D} |\tilde{f}(w)| \, \dd w\prec N^{-1+25\delta/18}\,.
\]
As a result, we get
\begin{equation} \label{2.77}
	N^{-1}\tr f(A)=\int_{\bb R} f(x)\varrho(x+d/(Nq))\dd x+O_{\prec}(N^{-1+25\delta/18})=\frac{2}{3}N^{-1+3\delta/2}+O_{\prec}(N^{-1+25\delta/18})\,.
\end{equation}
Combining \eqref{2.66} and \eqref{2.77} yields
\[
\varrho_A([2-d/(Nq)-N^{-2/3+\delta},2-d/(Nq)+ N^{-2/3+\delta}])=\frac{2}{3}N^{-1+3\delta/2}+O(N^{-1+25\delta/18})\,
\]
and thus 
$(2-d/(Nq)-\lambda_k/q)_+ \prec  N^{-2/3+\delta}$ for any fixed $k$. Together with \eqref{upper} we finished the proof of Theorem \ref{thm rigidity} on the right side of the spectrum.

\begin{remark}
In Theorem \ref{thm rigidity} we restrict ourselves on the regime $N^{2/3+\tau}\leq d \leq N/2$, where we have the optimal rigidity estimate. It can be deduced from Theorem \ref{theorem 4.1} and Proposition  \ref{prop4.5} that for all $N^{\tau}\leq  d \leq N/2$, we have
\[
\lambda_2 =2\sqrt{d(N-d)/N}(1+o(1))
\]
with very high probability. We do not pursuit it here.
\end{remark}

\subsection{Proof of Theorem \ref{theorem main result}} \label{sec5.3}
Let us define the spectral domain.
\begin{equation*} 
	\widetilde{\b D}\deq \{z=E+\ii \eta: 1\leq E\leq 4, N^{-2/3}\leq \eta\leq 1\}\,.
\end{equation*}
The next result follows from Theorem \ref{thm rigidity} and Proposition \ref{propsition 6.1}.

\begin{corollary} \label{cor1}
For all $z \in \widetilde{\b D}$, we have
\begin{equation*} 
	|\ul{G}-\widehat{m}| \prec \frac{1}{N\eta}+\frac{1}{d^{1/2}}+\frac{(\kappa+\eta)^{1/6}}{(N\eta)^{2/3}}
\end{equation*}
and
\[
\max_{ij} |G_{ij}-\delta_{ij}\widehat{m}|\prec \frac{1}{(N\eta)^{1/2}}+\frac{1}{d^{1/2}}\,.
\]
In addition, we have
	\begin{equation*} 
		|\lambda_2/q+d/(Nq)-2| \prec N^{-2/3}
	\end{equation*}
and
\begin{equation*} 
	\im G \asymp \im \widehat{m}
\end{equation*}
for all $z=E+\ii \eta$ satisfying $1\leq E \leq 4$ and $N^{-2/3+\delta}\leq \eta \leq 1$.	
\end{corollary}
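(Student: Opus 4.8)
The plan is to extract the first three assertions directly from results already in hand and to spend the actual work on the relation $\im\ul G\asymp\im m$. First I would note that $\widetilde{\b D}\subset\b D$: for $z=E+\ii\eta\in\widetilde{\b D}$ one has $|E|\le 4\le\delta^{-1}$ and $N^{-1+\delta}\le N^{-2/3}\le\eta\le 1\le\delta^{-1}$, so the hypotheses of Proposition \ref{propsition 6.1} hold at every such $z$; hence \eqref{2.53} and \eqref{2.54} give the first two displayed bounds of the corollary verbatim. The estimate $|q^{-1}\lambda_2-2|\prec N^{-2/3}$ is one of the terms already controlled by Theorem \ref{thm rigidity}, applicable because \eqref{2.333} forces $N^{2/3}\ll d\le N/2$.

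For the last claim, fix $z=E+\ii\eta$ with $1\le E\le 4$ and $N^{-2/3+\delta}\le\eta\le 1$. Since $\im\ul G=\im m+O_\prec(|\ul G-m|)$, it suffices to bound $|\ul G-m|\prec\Theta$ for a deterministic $\Theta$ with $\Theta\ll\im m$; this yields both $\im\ul G\lesssim\im m$ and, as $\im m>0$, $\im\ul G\gtrsim\im m$. I would split on the location of $E$. When $2+N^{-2/3+\delta}\le E\le 4$ we are in $\b S$, so \eqref{6.2} applies; here $\kappa\asymp E-2\ge N^{-2/3+\delta}$ and $\im m\asymp\eta(\kappa+\eta)^{-1/2}$, and one checks term by term that each of the five summands in \eqref{6.2} is $\ll\eta(\kappa+\eta)^{-1/2}$, using $\eta\ge N^{-2/3+\delta}$ (so $N^2\eta^3\gg1$, which settles the fourth summand), $\kappa+\eta\ge N^{-2/3+\delta}$, and $d\ge N^{2/3+\tau}$ (so $d^{-1}\ll\eta$, which settles the second). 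When $1\le E\le 2+N^{-2/3+\delta}$ we use \eqref{2.53}: here $\im m\asymp(\kappa+\eta)^{1/2}\ge N^{-1/3+\delta/2}$, and the three summands are each $\ll(\kappa+\eta)^{1/2}$, the first and third reducing to $(N\eta)^{-1}\ll(\kappa+\eta)^{1/2}$ (valid since $\eta\gg N^{-2/3}$) and the second to $d^{-1/2}\le N^{-1/3-\tau/2}\ll N^{-1/3+\delta/2}$. Uniformity over $z$ comes, as usual, from proving the bounds on an $N^{-3}$-net of the domain, a union bound, and the Lipschitz estimate $|\partial_z\ul G|\le(\im z)^{-2}\le N^2$.

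The step I expect to be the main obstacle — and the reason the argument branches at $E=2+N^{-2/3+\delta}$ — is the near-edge-but-outside regime $2<E\le 4$ with $\eta$ as small as $N^{-2/3+\delta}$, where $\im m$ degenerates to order $\eta$: there the coarse term $d^{-1/2}$ in \eqref{2.53} is no longer $\ll\im m$, and one genuinely needs the $\kappa$-dependent decay of the edge estimate \eqref{6.2}, in particular its $\frac{1}{d(\kappa+\eta)^{1/2}}$ and $\frac{1}{(N\eta)^2(\kappa+\eta)^{1/2}}$ terms. All the remaining work is routine bookkeeping on top of Proposition \ref{propsition 6.1} and Theorem \ref{thm rigidity}.
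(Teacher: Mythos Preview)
Your proposal is correct and follows the paper's intended route: the paper simply states that the corollary follows from Theorem~\ref{thm rigidity} and Proposition~\ref{propsition 6.1} without giving details, and your argument is precisely the natural unpacking of that claim --- reading off \eqref{2.53}, \eqref{2.54}, and \eqref{2.11} for the first three assertions, and for $\im\ul G\asymp\im m$ splitting at $E=2+N^{-2/3+\delta}$ so as to use the sharper edge estimate \eqref{6.2} where $\im m$ is small. The term-by-term verifications and the Lipschitz/net argument for uniformity are all standard and correct.
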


With the help of Corollary \ref{cor1}, one can now obtain Theorem \ref{theorem main result} (at the right spectral edge) using a strategy very similar to that of \cite[Section 9]{BHKY19}. 

More precisely,  by \cite{LY20,AH20} and Corollary \ref{cor1}, one immediately gets that, near the right edge of the spectrum, a Dyson Brownian motion starting at $A$ reaches local equilibrium at time $t_*\gg N^{-1/3}$. Theorem \ref{theorem main result} then follows by comparing the edge statistics of the Dyson Brownian motion at times $0$ and $t_*$.
The main difference in the comparison argument is that one needs to use Lemma \ref{lem2.2} instead of \cite[Corollary 3.2]{BHKY19}. We shall sketch the steps, with emphasis on this difference.

Let us adopt the conventions in \cite{BHKY19}, i.e.\,we consider the constrained GOE $W$ satisfying
\[
\bb EW_{ij}W_{kl}=\frac{1}{N}\Big(\delta_{ik}-\frac{1}{N}\Big)\Big(\delta_{jl}-\frac{1}{N}\Big)+\frac{1}{N}\Big(\delta_{il}-\frac{1}{N}\Big)\Big(\delta_{jk}-\frac{1}{N}\Big)\,.
\]
We have the integration by parts formula
\begin{equation} \label{GOE}
	\bb E W_{ij}F(W)=\frac{1}{N^3}\sum_{k,l}\bb E \big[\partial_tF(W+t\xi_{ij}^{kl})\big|_{t=0}\big]\,.
\end{equation}
The matrix-valued process is defined by
\begin{equation} \label{A}
A(t)\deq \e^{-t/2}A+\sqrt{1-\e^{-t}}W\,,
\end{equation}
and we denote its eigenvalues by $\xi_1(t)\geq \cdots \geq \xi_N(t)$. We define the parameter $s\deq 1-\e^{-t}$. The Green function is defined by $G(t)\equiv G(t;z)\deq P_\bot(A(t)-z)^{-1}P_\bot$. Recall that we use $\varrho(x)$ to denote the semicircle distribution on $[-2,2]$. As $A$ and $W$ have asymptotic eigenvalue densities $\varrho(x+d/(Nq))$ and $\varrho(x)$ respectively, $A(t)$ has asymptotic eigenvalue density 
\[
\varrho(t;x)\deq  \varrho(x+d/(e^{t/2}Nq))\,,
\]
and we define its Stieltjes transform by
\[
m(t;z)\deq m(z+d/(e^{t/2}Nq))\,. 
\]

As in \cite[Sectiom 9.2]{BHKY19}, the next result follows from Corollary \ref{cor1} and \cite{AH20,BHY,LY20}.

\begin{lemma} \label{lemma 6.5}
(i) Let $0 \leq t \ll 1$. We have
	\[
	|\xi_2(t)+d/(e^{t/2}Np)-2| \prec  N^{-2/3}\,.
	\]
	
	(ii) Let $0 \leq t \ll 1$. Uniformly for any $z \in \widetilde{\b D}$, we have
	\[
\big|	\ul{G}(t;z)-m(t;z) \big|\prec \frac{1}{N\eta}+\frac{1}{d^{1/2}}+\frac{(\kappa+\eta)^{1/6}}{(N\eta)^{2/3}}
	\]
	and
	\[
	\max_{ij} |G_{ij}(t;z)-\delta_{ij}m(t;z)|\prec \frac{1}{(N\eta)^{1/2}}+\frac{1}{d^{1/2}}\,.
	\]
	
(iii) Recall the definition of $\mu$ from \eqref{jieluote} and set $t_*=N^{-1/3+\mu}$. Fix $s \in \mathbb R$. We have
\[
\lim_{N \to \infty}\mathbb P_{A(t_*)}\big(N^{2/3}(\xi_2(t_*)+d/(e^{2/t}Nq)-2)\geq s\big)=\lim_{N \to \infty}\mathbb P_{\emph{GOE}}\big(N^{2/3}(\mu_1-2)\geq s\big)\,.
\]
\end{lemma}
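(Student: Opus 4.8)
The plan is to carry out the standard Dyson Brownian motion argument at the right edge, as in \cite[Section 9.2]{BHKY19}, taking $A(0)=A$ as the initial datum whose local law and optimal edge rigidity are already supplied by Corollary \ref{cor1} and Theorem \ref{thm rigidity}. Parts (i)--(ii) amount to propagating these estimates along the flow, and (iii) is the local-relaxation statement, which I would obtain by quoting the DBM edge-universality theory.

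First I would establish (ii), the local law for $G(t;z)$ uniformly in $0\le t\ll 1$ and $z\in\widetilde{\b D}$. Since $A(t)=\e^{-t/2}A+\sqrt{1-\e^{-t}}\,W$ with $W$ the constrained GOE of \eqref{GOE} independent of $A$ and $\sqrt{1-\e^{-t}}\asymp\sqrt t$, one re-runs the self-consistent-equation analysis of Sections \ref{sec4} and \ref{sec5} for $\ul G(t)$ and $G_{ij}(t)$: the random-regular-graph component is handled verbatim through Lemma \ref{lem2.2}, while the Gaussian component is handled by the integration-by-parts formula \eqref{GOE}, whose leading (second-order) contribution reproduces exactly the self-energy $\ul G\, G_{ij}$ and whose higher-order terms are negligible because $\var W_{ij}=O(N^{-1})$. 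This yields the same equation $1+z\ul G(t)+\ul G(t)^2\prec\widehat{\cal E}$ and the same entrywise estimate as at $t=0$, so the stability argument in the proof of Proposition \ref{propsition 6.1} applies without change; equivalently one simply invokes the DBM local-law continuity results of \cite{AH20,BHY,LY20}. Given (ii), the rigidity bound $|\xi_2(t)-2|\prec N^{-2/3}$ in (i) follows by repeating the proof of Theorem \ref{thm rigidity}: on $\widehat{\b S}$ the estimate from (ii) makes $\im\ul G(t)$ too small to support an eigenvalue in a window of width $N^{-2/3}$ above $2+N^{-2/3+\delta}$, which gives the upper bound, and the smooth-counting computation with a test function supported in $[2-N^{-2/3+\delta},2+N^{-2/3+\delta}]$ forces an eigenvalue into that interval, which gives the lower bound. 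The a priori bound $\|A(t)\|=O(1)$ needed here follows from Theorem \ref{thmrefbound} and $\|W\|=O(1)$, using that $\b e$ is an exact null eigenvector of $W$, so that $\xi_2(t),\dots,\xi_N(t)$ are precisely the eigenvalues of $\e^{-t/2}A+\sqrt{1-\e^{-t}}W$ restricted to $\b e^{\bot}$.

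For (iii), the initial matrix $A=A(0)$ has empirical spectral distribution converging to $\varrho$ and, by Theorem \ref{thm rigidity}, obeys optimal rigidity of $\xi_2$ at scale $N^{-2/3}$ at the right edge. By the edge-universality theory for Dyson Brownian motion of \cite{LY20,AH20} (see also \cite{BHY}), such initial data are driven to the GOE edge distribution after any time $t\gg N^{-1/3}$; since $t_*=N^{-1/3+\mu}\gg N^{-1/3}$ by \eqref{jieluote}, this gives $\lim_{N\to\infty}\mathbb P_{A(t_*)}(N^{2/3}(\xi_2(t_*)-2)\ge s)=\lim_{N\to\infty}\mathbb P_{\mathrm{GOE}}(N^{2/3}(\mu_1-2)\ge s)$.

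The main obstacle is bookkeeping rather than conceptual: one must check that the hypotheses of the cited DBM inputs hold in the present normalization — that the local law and optimal edge rigidity are available with errors small enough (the $d^{-1/2}$ term is harmless since $d\ge N^{2/3+\tau}$) and uniformly for $t\in[0,t_*]$, and, for (ii), that inserting the small Gaussian component via \eqref{GOE} produces only the expected self-energy plus negligible corrections rather than degrading the self-consistent equation. Once these verifications are in place the lemma follows by direct citation.
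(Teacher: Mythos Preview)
Your proposal is correct and matches the paper's own treatment: the paper does not give a detailed proof of Lemma \ref{lemma 6.5} either, stating only that ``the next result follows from Corollary \ref{cor1} and \cite{AH20,BHY,LY20}'' as in \cite[Section 9.2]{BHKY19}. Your outline fills in the natural details of this citation, and in particular your observation that one can equivalently invoke the DBM local-law continuity results rather than rerun Sections \ref{sec4}--\ref{sec5} is exactly the intended route.
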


The limiting distribution of $\lambda_2$ can be obtained through the following estimate.
\begin{lemma} \label{lemma 6.6}
Let $t_*=N^{-1/3+\mu}$, $\eta=N^{-2/3-\mu}$. For $\kappa\asymp N^{-2/3}$, we define
\[
X_{t}\deq \im \bigg[N \int_{\kappa}^{N^{-2/3+\mu}} \ul {G}(t;2-d/(e^{t/2}Nq)+x+\ii \eta) \dd x \bigg]\,.
\]
Let $L: \bb R \to \bb R$ be a fixed smooth test function with bounded derivatives. We have 
\[
\big|\bb E L(X_{t_*})-\bb E L(X_0)\big| =O( N^{-\tau/4})
\]
\end{lemma}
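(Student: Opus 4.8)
The plan is to prove Lemma \ref{lemma 6.6} by a Green function comparison along the flow \eqref{A}, following the scheme of \cite[Section 9]{BHKY19} but with the switching identity of Lemma \ref{lem2.2} in place of the cruder formula used there. It suffices to show that
\[
\int_0^{t_*}\absB{\frac{\dd}{\dd t}\,\bb E L(X_t)}\,\dd t=O_\prec(N^{-\tau/4})\,,
\]
and in fact I would control $\frac{\dd}{\dd t}\bb E L(X_t)$ itself uniformly for $t\in[0,t_*]$, the short length $t_*=N^{-1/3+\mu}\ll1$ of the interval being a bonus.

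To compute the derivative, note that $\dot A(t)\deq\frac{\dd}{\dd t}A(t)=-\tfrac12\me^{-t/2}A+\tfrac{\me^{-t}}{2\sqrt{1-\me^{-t}}}W$, and since $\b e$ is an eigenvector of $A(t)$ the resolvent identity gives $\partial_t G(t;z)=-G(t;z)\dot A(t)G(t;z)$, whence $\partial_t\ul G(t;z)=-N^{-1}\tr\pb{G(t;z)^2\dot A(t)}$ and $\frac{\dd}{\dd t}\bb EL(X_t)=\bb E[L'(X_t)\partial_t X_t]$ with $\partial_t X_t=\im\pb{N\int_\kappa^{N^{-2/3+\mu}}\partial_t\ul G(t;2+x+\ii\eta)\,\dd x}$. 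Splitting $\dot A(t)$ into its $A=q^{-1}\A$ part and its $W$ part, I would treat the former by writing $\tr(G^2A)=q^{-1}\sum_{ij}\A_{ij}(G^2)_{ji}$ and applying the switching integration by parts of Lemma \ref{lem2.2} to each resulting factor $\A_{ij}$ (conditionally on $W$), and the latter by the Gaussian formula \eqref{GOE} (conditionally on $\A$), using that $\partial_{W_{kl}}$ acting on a function of $A(t)$ produces a factor $\sqrt{1-\me^{-t}}$ that cancels the $1/\sqrt{1-\me^{-t}}$ in $\dot A(t)$, so that no singularity at $t=0$ occurs. After Taylor-expanding the discrete increments via \eqref{4.3} and using the differentiation rule \eqref{diff}, each of the two integration-by-parts steps produces, to leading order, the same expression — a normalized sum over pairs $ij$ of the corresponding variance times two extra Green function factors paired against $L'(X_t)$ — because after the $q$-normalization $\A_{ij}$ and $W_{ij}$ have the same variance $N^{-1}$ (up to $O(N^{-2})$) and the same zero-row-sum structure encoded by $P_\bot$; these coincide with the contribution of the $-\tfrac12\me^{-t/2}A$ drift and cancel, exactly as in the verification that a GOE matrix is stationary under \eqref{A}.

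What remains are error terms, which I would bound along the lines of the proofs of Propositions \ref{prop4.2} and \ref{prop4.5}: (a) the switching remainder $O(N^{-1})\bb E\cM_{ij}(\cdot)$ of Lemma \ref{lem2.2}; (b) the term $(N-d)^{-1}d^{-1}(\A^3)_{ij}$, controlled via the large-deviation estimate of Proposition \ref{prop4.4}; (c) the Taylor remainders of order $\geq2$ in \eqref{4.3}, each carrying an extra power of $q^{-1}\asymp d^{-1/2}$; and (d) the mismatch between the third and higher moments of $\A_{ij}$ and of $W_{ij}$, of size $O(N^{-1}d^{-1/2})$ (the third cumulant of $W_{ij}$ vanishing). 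Each such term I would estimate using the local law and rigidity of Lemma \ref{lemma 6.5}, the Ward identity \eqref{ward2}, the resolvent identities \eqref{2.1}, Proposition \ref{prop4.4}, and the combinatorial identities $\sum_j(\A^r)_{ij}=d^r$ together with the bounds on sums of $\chi_{ik}^{jl}(\A)$ used in Sections \ref{sec4}--\ref{sec5}; inserting the scales $\eta=N^{-2/3-\mu}$, $\kappa\asymp N^{-2/3}$, $d\geq N^{2/3+\tau}$ and $\mu,\delta,\nu$ as in \eqref{jieluote}, together with $\|L'\|_\infty=O(1)$ and the fact that the $x$-integral runs over a window of length $N^{-2/3+\mu}$, yields the asserted bound.

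The main obstacle is the error estimation. Near the spectral edge one has only $\max_{ij}|G_{ij}|=O(1)$ with no smallness to spare, so every bound must draw its decay from Ward identities, the $x$-integration, the factors of $q^{-1}\asymp d^{-1/2}$, and the improved local law of Corollary \ref{cor1}; moreover, unlike the Gaussian identity \eqref{GOE}, the switching formula of Lemma \ref{lem2.2} generates the extra contributions $\cM_{ij}$ and $(\A^3)_{ij}$, and one must verify that none of these — nor any of the third-moment mismatch terms — carries a constant that deteriorates as $d\to N/2$. This $d$-uniformity, which is exactly what Lemma \ref{lem2.2} and Proposition \ref{prop4.4} are designed to provide, is the point where the present argument departs from \cite{BHKY19}; checking the precise cancellation of the leading second-order terms, with the $P_\bot$/zero-row-sum bookkeeping, is the other delicate but essentially routine point.
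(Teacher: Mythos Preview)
Your plan coincides with the paper's: differentiate $\bb E L(X_t)$, split $\dot A(t)$ into its $A$- and $W$-parts, handle the former with Lemma \ref{lem2.2} and the latter with \eqref{GOE}, and show that the leading contributions cancel while the remainders are controlled via Proposition \ref{prop4.4}, the Ward identity, and Lemma \ref{lemma 6.5}. One point in your sketch is too loose and is worth sharpening. After a single application of Lemma \ref{lem2.2} and the first-order Taylor expansion \eqref{4.3}, the $\A$-contribution still carries the \emph{random} weight $\chi_{ik}^{jl}(\A)=\A_{ik}(1-\A_{ij})\A_{jl}(1-\A_{kl})$, so it does not yet match the Gaussian term $N^{-3}\sum_{ijkl}\partial_{ij}^{kl}(\cdots)$; your appeal to ``same variance $N^{-1}$'' is only a heuristic, since the entries of $\A$ are not independent and Lemma \ref{lem2.2} is not a variance identity. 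The paper makes the match by applying Lemma \ref{lem2.2} (with Proposition \ref{prop4.4} absorbing the resulting $(\A^3)$ terms) \emph{three more times}, once to each remaining $\A$-factor in $\chi$, replacing it by $d/N$ (resp.\ $(N-d)/N$); only then does the prefactor collapse to $d(N-d)/(N^4q^2)=N^{-3}$ and exact cancellation with the $W$-term occur. Correspondingly, your item (d) on ``third and higher moment mismatch'' is a Lindeberg-style heuristic that does not really arise in this framework: the higher-order errors here are the Taylor remainders (your (c)) and the errors from the iterated replacement of $\chi$, not cumulant differences.
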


By Lemma \ref{lemma 6.6} and an analogue of (9.33) in \cite{BHKY19}, we get
\[
\lim_{N \to \infty}\mathbb P_{A}\big(N^{2/3}(\lambda_2/q+d/(Nq)-2)\geq s\big)=\lim_{N \to \infty}\mathbb P_{A(t_*)}\big(N^{2/3}(\xi_2(t_*)-2)\geq s\big)
\]
for any fixed $s \in \mathbb R$. Together with Lemma \ref{lemma 6.5} (iii) we conclude the universality of $\lambda_2$. Analogue results for other non-trivial eigenvalues of $\A$ can be proved in the same way. We omit the details.

\begin{proof}[Proof of Lemma \ref{lemma 6.6}]
Let us abbreviate $G\equiv G(t)$. We have
\begin{equation} \label{6.20}
	\begin{aligned}
		\frac{\dd }{\dd t} \mathbb E L(X_t)=\bb E \bigg[L'(X_t)\im \int_\kappa^{N^{-2/3}+\mu}-\sum_{ij} \dot A_{ij}(t)(G^2)_{ij}+Nd/(2e^{t/2}Nq)\ul{G^2}\, \dd x\bigg]\,.
	\end{aligned}
\end{equation}
By \eqref{4.41}, \eqref{GOE}, and \eqref{A}
\begin{equation} \label{6.21}
	\begin{aligned}
-\sum_{ij} \bb E\dot A_{ij}(t)L'(X_t)(G^2)_{ij}&=\frac{1}{2}\sum_{ij}\bb E \bigg[\bigg(e^{-t/2}A_{ij}-\frac{\e^{-t}}{\sqrt{1-e^{-t}}}W_{ij}\bigg)L'(X_t)(G^2)_{ij}\bigg]\\
&=\frac{\e^{-t/2}}{2q}\sum_{ij} \bb E \A_{ij} L'(X_t) (G^2)_{ij}-\frac{\e^{-t/2}}{2N^3}\sum_{ijkl}\bb E \partial_{ij}^{kl}(L'(X_t)(G^2)_{ij})\,.
	\end{aligned}
\end{equation}
By Lemma \ref{lem2.2}, the first term on RHS of \eqref{6.21} can be computed by
\begin{equation*}
	\begin{aligned}
		&\,\frac{
		\e^{-t/2}}{2(N-d)dq}\sum_{ijkl}\bb E \chi_{ik}^{jl}(\A)\D_{ij}^{kl}(L'(X_t)(G^2)_{ij})	+\frac{\e^{-t/2}d}{2(N-d)q}\sum_{ij}\bb E L'(X_t)(G^2)_{ij}\\
		&-\frac{e^{-t/2}}{2(N-d)dq}\sum_{ij}\bb E (\A^3)_{ij}L'(X_t)(G^2)_{ij}+O(N^{-1}q^{-1})\cdot \sum_{ij}\bb E\cal M_{ij}(L'(X_t)(G^2)_{ij})\\
		&-\frac{\e^{-t/2}}{2(N-d)dq}\sum_{ikl}\bb E \chi_{ik}^{il}(\A)\D_{ii}^{kl}(L'(X_t)(G^2)_{ii})	-\frac{\e^{-t/2}d}{2(N-d)q}\sum_{i}\bb E L'(X_t)(G^2)_{ii}\\
		&+\frac{e^{-t/2}}{2(N-d)dq}\sum_{i}\bb E (\A^3)_{ii}L'(X_t)(G^2)_{ii}	
		\eqd Y_1+\cdots+Y_7
		\end{aligned}
\end{equation*}
By $\sum_{i} G_{ij}=0$, we have $Y_2=0$. By Lemma \ref{lemma 6.5} (ii), one can deduce that
\[
\im \ul{G}(2+x+\ii N^{-2/3}) \prec N^{-1/3+\mu} \quad \mbox{and} \quad \max_{ij}|G_{ij}(2+x+\ii N^{-2/3})| \prec 1
\]
for $\kappa \leq x\leq N^{-2/3+\mu}$. Since  $y \im[\ul{G}(2+x+\ii y)]$ is a monotone decreasing function of $y$, we get
\begin{equation} \label{6.22}
\im \ul{G}(2+x+\ii 
\eta) \prec N^{-1/3+2\mu} \quad \mbox{and} \quad \max_{ij}|G_{ij}(2+x+\ii\eta)| \prec  N^{\mu}
\end{equation} 
for $\kappa \leq x\leq N^{-2/3+\mu}$. From the above and \eqref{ward} we can deduce that 
\[
Y_4 \prec Nq^{-1} \frac{N^{-1/3+2\mu}}{\eta}=N^{4/3+3\mu}q^{-1}\,.
\]
Similar as in Lemma \ref{lemAG}, we can apply the second relation of \eqref{2.1} and show that
\[
(\A^3G^2)_{ii}\prec d^{3/2} \frac{\im \ul{G}}{\eta} \prec d^{3/2} N^{1/3+3\mu}\,,
\]
where in the last step we used \eqref{6.22}. This implies $Y_3 \prec N^{1/3+3\mu}$. Similar to the estimates of $S_5,S_6,S_7$ in \eqref{4.134}, we can show that $Y_5\prec N^{1/3+3\mu}$ and
\[
Y_6+Y_7=-\frac{\e^{-t/2}Nd}{2(N-d)q}\bb E\ul{G^2}+O_{\prec}(N^{5/6+10\mu}d^{-1/2})\,.
\] 
Next, by \eqref{4.3}, we get
\begin{equation} \label{6.23}
	Y_1=\frac{\e^{-t/2}}{{2(N-d)dq^2}}\sum_{ijkl}\bb E \chi_{ik}^{jl}(\A)\partial_{ij}^{kl}(L'(X_t)(G^2)_{ij})+O_\prec(N^{4/3+10\mu}d^{-1/2})\,.
\end{equation}
Let us denote the first term on RHS of the above by $Y_{1,1}$. Using Lemma \ref{lem2.2} with $F(\A)=(1-\A_{ij})\A_{jl}(1-\A_{kl})\partial_{ij}^{kl}(L'(X_t)(G^2)_{ij})$, we get
\begin{equation*}
\begin{aligned}
	Y_{1,1}=&\,\frac{\e^{-t/2}}{2(N-d)^2d^2q^2}\sum_{ijklab}\bb E \chi_{ia}^{kb}(\A)\D_{ik}^{ab}F(\A)	+\frac{\e^{-t/2}}{2(N-d)^2q^2}\sum_{ijkl}\bb E F(\A)\\
	&-\frac{\e^{-t/2}}{2(N-d)^2d^2q^2}
	\sum_{ijkl}\bb E (\A^3)_{ik}F(\A)+O(N^{-2}d^{-2})\cdot\sum_{ijkl}\bb E\cal M_{ik}(F(\A))\\
	=&\,\frac{\e^{-t/2}}{2(N-d)^2q^2}\sum_{ijkl}\bb E F(\A)-\frac{\e^{-t/2}}{2(N-d)^2d^2q^2}
	\sum_{ijkl}\bb E (\A^3)_{ik}F(\A)+O_{\prec}(N^{1-\tau/3})\,,
\end{aligned}
\end{equation*}
where in the second step we used \eqref{2}, \eqref{4.3} and \eqref{6.22}. By Proposition \ref{prop4.4},  the second term on RHS of the above can be estimated by
\begin{equation*}
	\begin{aligned}
		&-\frac{\e^{-t/2}d}{2N(N-d)^2q^2} \sum_{ijkl} \bb EF(\A)+O_\prec(N^{-2}d^{-3})\sum_{ijkl} \bb E |(\A^3)_{ik}-d^3N^{-1}| |F(\A)|\\
		=&-\frac{\e^{-t/2}d}{2N(N-d)^2q^2} \sum_{ijkl} \bb EF(\A)	+O_\prec(N^{-2}d^{-3}\cdot N^{4/3+10\mu}d)\sum_{ik} \bb E |(\A^3)_{ik}-d^3N^{-1}| \\
		=&	-\frac{\e^{-t/2}d}{2N(N-d)^2q^2} \sum_{ijkl} \bb EF(\A)+O_{\prec}(N^{5/6+10\mu})\,.
	\end{aligned}
\end{equation*}
Since $N^{2/3+\tau}\leq d\leq N/2$, we have
\begin{equation} \label{6.24}
	\begin{aligned}
Y_{1,1}&=\frac{\e^{-t/2}}{2(N-d)^2q^2}\sum_{ijkl}\bb E F(\A)	-\frac{e^{-t/2}d}{2N(N-d)^2q^2} \sum_{ijkl} \bb EF(\A)+O_{\prec}(N^{1-\tau/3})\\
&=\frac{\e^{-t/2}}{2(N-d)Nq^2}\sum_{ijkl} (1-\A_{ij})\A_{jl}(1-\A_{kl})\partial_{ij}^{kl}(L'(X_t)(G^2)_{ij})+O_{\prec}(N^{1-\tau/3})\,.
	\end{aligned}
\end{equation}
Comparing to \eqref{6.23}, we see that heuristically, the above replaces the factor $\A_{ik}$ in $Y_{1,1}$ by $dN^{-1}$, with a small error. Repeating \eqref{6.24} three times we get
\[
Y_{1,1}=\frac{\e^{-t/2}d(N-d)}{2q^2N^4}\sum_{ijkl}\partial_{ij}^{kl}(L'(X_t)(G^2)_{ij})+O_{\prec}(N^{1-\tau/3})=\frac{\e^{-t/2}}{2N^3}\sum_{ijkl}\partial_{ij}^{kl}(L'(X_t)(G^2)_{ij})+O_{\prec}(N^{1-\tau/3})\,,
\]
and together with \eqref{6.23} yields
\[
Y_1=\frac{\e^{-t/2}}{2N^3}\sum_{ijkl}\partial_{ij}^{kl}(L'(X_t)(G^2)_{ij})+O_{\prec}(N^{1-\tau/3})\,.
\]
Inserting the above results of $Y_1,\dots,Y_7$ to \eqref{6.21}, we get
\[
-\sum_{ij} \bb E\dot A_{ij}(t)L'(X_t)(G^2)_{ij}=-\frac{\e^{-t/2}Nd}{2(N-d)q}\bb E\ul{G^2} +O_{\prec}( N^{1-\tau/3})\,.
\]
Together with \eqref{6.20} we conclude the proof.
\end{proof}

	{\small
	
	\bibliography{bibliography} 
	
	\bibliographystyle{amsplain}
}

\vspace{1cm}
\noindent
Yukun He\\
Department of Mathematics\\
City University of Hong Kong\\
 Email: \href{mailto:yukunhe@cityu.edu.hk}{yukunhe@cityu.edu.hk}
\end{document}